\newtheorem{theorem}{Theorem}[section]
\newtheorem{corollary}[theorem]{Corollary}
\newtheorem{lemma}[theorem]{Lemma}
\newtheorem{problem}[theorem]{Problem}
\numberwithin{equation}{section}
\theoremstyle{definition}
\theoremstyle{remark}
\newtheorem{remark}[theorem]{Remark}
\newtheorem{remarks}[theorem]{Remarks}
\newtheorem*{remark*}{Remark}
\newcommand{\1}[1]{{\mathbf 1}{\{#1\}}}
\newcommand{\R}{{\mathbb R}}
\newcommand{\N}{{\mathbb N}}
\newcommand{\ZP}{{\mathbb Z}_+}
\newcommand{\RP}{{{\mathbb R}_+}}
\newcommand{\X}{{\mathbb X}}
\DeclareMathOperator{\Exp}{\mathbb{E}}
\renewcommand{\Pr}{{\mathbb P}}
\DeclareMathOperator{\cosec}{cosec} 
\DeclareMathOperator{\sign}{sgn}
\newcommand{\T}{\text{T}_{\alpha,\beta,c}}
\newcommand{\Tout}{\text{T}_{\alpha,\beta,c}^{\mathrm{out}}}
\newcommand{\Tin}{\text{T}_{\alpha,\beta,c}^{\mathrm{in}}}
\newcommand{\Tbal}{\text{T}_{\alpha,c}^{\mathrm{bal}}}
\newcommand{\eps}{\varepsilon}
\newcommand{\ud}{{\mathrm d}}
\newcommand{\cF}{{\mathcal F}}
\newcommand{\cR}{{\mathcal R}}
\newcommand{\cT}{{\mathcal T}}
\newcommand{\as}{\ \text{a.s.}}
\newcommand{\Bigmid}{\; \Bigl| \;}
\newcommand{\bx}{{\mathbf{x}}}
\newcommand{\bu}{{\mathbf{u}}}
\newcommand{\bv}{{\mathbf{v}}}
\newcommand{\0}{{\mathbf{0}}}
\def\namedlabel#1#2{\begingroup  
    (#2)%
    \def\@currentlabel{#2}%
    \phantomsection\label{#1}\endgroup
}
\begin{document}

\title{Markov chains with heavy-tailed increments and asymptotically zero drift}
\author{Nicholas Georgiou\footnote{Department of Mathematical Sciences, Durham University, South Road, Durham DH1 3LE, UK}
\and Mikhail V.\ Menshikov\footnotemark[1]
\and Dimitri Petritis\footnote{IRMAR, Campus de Beaulieu, 35042 Rennes Cedex, France}
\and Andrew R.\ Wade\footnotemark[1]
}
\date{\today}
\maketitle

\begin{abstract}
We study the recurrence/transience phase transition for
Markov chains on $\RP$, $\R$, and $\R^2$ whose increments
have heavy tails with exponent in $(1,2)$  
and asymptotically zero mean. This is the infinite-variance analogue of the classical
Lamperti problem. On $\RP$, for example, we show that if the tail of the positive
increments is about $c y^{-\alpha}$ for an exponent $\alpha \in (1,2)$ and if the drift at $x$ is about $b x^{-\gamma}$,
then the critical regime has $\gamma = \alpha -1$ and recurrence/transience is determined by the 
sign of $b + c\pi \cosec (\pi \alpha)$. On $\R$ we classify whether transience is directional
or oscillatory, and extend an example of Rogozin \& Foss to a class of transient martingales which oscillate
between $\pm \infty$. In addition to our recurrence/transience results, we also give sharp
results on the existence/non-existence of moments of passage times.
\end{abstract}

\medskip

\noindent
{\em Key words:} Random walk; heavy-tails; asymptotically zero drift; Lamperti's problem; recurrence; transience; passage-time moments; Lyapunov functions.

\medskip

\noindent
{\em AMS Subject Classification:}  60J05 (Primary) 60J10 (Secondary)

\section{Introduction}
\label{sec:intro}

\emph{Lamperti's problem} describes how the asymptotic behaviour of a 
non-homogeneous random walk (Markov chain) $\xi_n$
on $\RP$ whose increments have at least 2 moments is determined by
the interplay between the increment moment functions
\begin{equation}
\label{eq:mu}
\mu (x) := \Exp [ \xi_{n+1} - \xi_n \mid \xi_n = x] , \text{ and } s^2 (x) :=  \Exp [ ( \xi_{n+1} - \xi_n)^2 \mid \xi_n = x] .\end{equation}
Lamperti~\cite{lamp1,lamp2,lamp3}
showed, among other things, that the critical regime for the recurrence or transience of $\xi_n$
is when $ x \mu (x)$ is comparable to $s^2(x)$.
For further background and results, see~\cite{dkw,mpow} and the references therein;
the continued interest of Lamperti's problem is due in part
to its nature as a prototypical near-critical stochastic system.

In this paper, we study the \emph{heavy-tailed} case where $s^2(x) = \infty$,
but $\mu(x)$ is still well-defined.
Under mild conditions, if $\limsup_{x \to \infty} \mu(x) < 0$
 then the Markov chain on~$\RP$ is positive recurrent,
while if $\liminf_{x \to \infty} \mu(x) > 0$
then the chain is transient (see e.g.~Theorems~2.6.2 and~2.5.18 of~\cite{mpow}). Thus the case of central interest
is the \emph{asymptotically zero drift} regime where $\mu(x)\to 0$
as $x \to \infty$. 

We show that in the heavy-tailed case it is the interplay between
the drift and the \emph{tails} of the increments that determines the
asymptotic behaviour.
For instance, suppose  
\begin{equation}
\label{eq:tail}
  \Pr[\xi_{n+1} -\xi_n > y \mid \xi_n = x] \sim c y^{-\alpha}
\end{equation}
for some constants $c>0$ and $\alpha \in (1,2)$, uniformly in $x$.
Here $s^2(x) = \infty$, so Lamperti's classification does not
apply.  We show (Theorem~\ref{thm:recurrence-on-RP} below) that if $\mu (x) \sim b x^{-\gamma}$
for some $\gamma \in (0,1)$, then the critical regime for
recurrence and transience has $\gamma = \alpha -1$,
and in that case the process is transient if $b + c \pi \cosec (\pi \alpha ) >0$
and recurrent if $b + c \pi \cosec (\pi \alpha ) < 0$.

As well as classifying recurrence and transience,
we quantify the recurrent cases by determining
which moments of return times to a bounded set
are finite.

In addition to considering chains on $\RP$,
we also consider chains on $\R$, and give an example on $\R^2$.
On $\R$, the situation is richer than on $\RP$,
and also than in the case where $s^2(x) = O(1)$,
since \emph{oscillatory} transience can occur,
when $\liminf_{n \to \infty} \xi_n = -\infty$ and $\limsup_{n \to \infty} \xi_n = +\infty$,
but nevertheless $\lim_{n \to \infty} | \xi_n | = \infty$.
In the case of \emph{zero drift} ($\mu(x) = 0$),
on $\RP$ the chain is recurrent under mild conditions (see Example~2.5.6 of~\cite{mpow})
but on $\R$ zero drift does \emph{not} imply recurrence when $s^2 (x) = \infty$
(cf.~Theorem~2.5.7 of~\cite{mpow}). Rogozin \& Foss~\cite{rf} gave a concrete example
on $\R$ of a zero-drift chain that is transient.
The example has heavy-tailed jumps, with exponent $\alpha \in (1,3/2]$,
 \emph{inwards}
to the origin from both the left and right half-lines. We show (Theorem~\ref{thm:recurrence-on-R-in} below)
that the transience in this case is oscillatory and
give general conditions
for behaviour of this kind, and also show how an asymptotically zero drift perturbs the picture.
Again, in the recurrent cases we also study moments of passage times.

Apart from being interesting in their own right, 
stochastic processes on the positive half-line are important in the study of
higher-dimensional processes via the Lyapunov function method (see e.g.~\cite{aim,fmm,lamp1,mpow}): for a Markov chain $\zeta_n$
on $\R^d$, the analysis typically proceeds by considering a
one-dimensional projection $\rho: \R^d \to \RP$ of the form $\rho(x) := \|x\|^{\nu}$, for some
positive constant $\nu$; then $\xi_n = \rho(\zeta_n)$ is a (not necessarily Markov) process on
$\RP$ and recurrence/transience of $\zeta_n$ and $\xi_n$ coincide. 
As a first step into higher dimensions, 
we give an example of a heavy-tailed random walk
on $\R^2$ and show how our approach can be used to provide conditions for recurrence and
transience.

We briefly mention other relevant literature.
The case of \emph{balanced} tails (when the left and right jumps
have the same exponent)
has been studied in several papers by Sandri\'c in both discrete and continuum settings: see~\cite{sandricB,sandricS,sandricT}
and references therein. The only previous results on passage time moments are due to Doney~\cite{doney}
in the case of a zero-drift, spatially homogeneous random walk, and a subset of the authors~\cite{mpew};
these include some special cases of our results, but not the asymptotically zero drift regime.
The case where even $\mu (x)$ is not well-defined is quite different: see~\cite{hmmw,mpew} and references therein.

Section~\ref{sec:results} presents our main results, working in turn through the cases
on $\RP$ and $\R$, and finally giving an example on $\R^2$ and an open problem for a reflected
random walk in a quadrant with heavy tails.
Section~\ref{sec:criteria} contains the tools that we need to
determine the asymptotic behaviour of our processes via the method of Lyapunov functions.
These include criteria for directional and oscillatory transience.
Section~\ref{sec:lyapunov} carries out the analysis of our Lyapunov functions.
Section~\ref{sec:proofs} contains the proofs of the main results.
Technical results on integral computations needed for our Lyapunov-function
estimates are collected in the Appendix.

\section{Main results}
\label{sec:results}

\subsection{Notation}
\label{sec:notation}

For all the models in this paper, we use the following notation.
Let $\X$ be an 
 infinite, unbounded, measurable
subset of  $\R^d$ ($d \in \N$), equipped with its own 
 $\sigma$-algebra $\mathcal{X}$.  To avoid unnecessary complications with conditioning, we suppose in the sequel that the measurable space $(\X, \mathcal{X})$  is a standard Borel space.
Let $\Xi := (\xi_n, n \in \ZP)$ be a time-homogeneous $\X$-valued Markov process. 
Here and elsewhere,
$\N := \{1,2,3,\ldots\}$ and $\ZP:= \N \cup \{ 0 \}$. 
The Markov process  $\Xi$ is determined by its Markov kernel $P:\X\times \mathcal{X}\to [0,1]$, specifying, as usual,  the conditional law $P(x,A)=\Pr(\xi_{n+1}\in A \mid \xi_n=x)$, for $x\in\X$ and $A\in\mathcal{X}$.  By the Ionescu Tulcea theorem, the kernel $P$ and the law of $\xi_0$ uniquely determine a probability measure $\Pr$ on the space $\X^{\ZP}$ that can be disintegrated into the conditional measures $\Pr_x[\, \cdot\, ]=\Pr[\; \cdot \mid \xi_0 = x]$ for all deterministic initial conditions. We write $\Exp_x$ for the
  expectation corresponding to $\Pr_x$. 
For $n \in \ZP$, 
define the increment $\theta_{n+1} :=
\xi_{n+1}-\xi_n$.
Consistently, the transition kernel of the chain is recovered  from the law of $\theta_1$ given $\xi_0$;
to ease notation, we write $\theta$ for $\theta_1$.

For $x \in \R$ we write $x_+ := x \1{ x \geq 0}$
and $x_- := - x \1 { x < 0 }$.

\subsection{Walks on the half line}
\label{sec:RP-results}

We start with $\X \subseteq \RP$.
We say that $\Xi$ is \emph{transient} if $\lim_{n \to \infty} \xi_n = \infty$, a.s.,
and that $\Xi$ is \emph{recurrent} if $\liminf_{n \to \infty} \xi_n \leq r_0$
for some deterministic $r_0 \in \RP$. For $a \in \RP$ define $\tau_a := \min \{n \in \ZP : \xi_n \leq a\}$.
If $\Xi$ is recurrent then $\Pr_x [ \tau_a < \infty ] = 1$ for any $a > r_0$ and any $x$;
if, moreover, $\Exp_x [\tau_a] < \infty$ for all $a$ sufficiently large and any $x$, 
we say that $\Xi$
is \emph{positive recurrent}, while if for all $a$ sufficiently large
and all $x>a$ we have $\Exp_x [\tau_a] = \infty$, then we say that $\Xi$ is \emph{null recurrent}.
We typically (but not always) assume the following.
\begin{description}
\item\namedlabel{ass:non-confinement-on-RP}{N} We have $\limsup_{n \to \infty} \xi_n = +\infty$, a.s.
\item\namedlabel{ass:tails-on-RP}{$\T$}
There exist constants $\alpha \in (1,2)$, $\beta > \alpha$, $c > 0$, and $x_0 \in \RP$ such that
\[
\lim_{y\to\infty}\sup_{x \geq x_0} \big|y^\alpha \Pr_x[\theta_+ > y] -c \big| = 0, 
 \text{ and } 
 \sup_{x \geq x_0}
  \Exp_x[\theta_-^{\beta}] < \infty. \]
\end{description}
Assumption~\eqref{ass:tails-on-RP} includes a precise version of~\eqref{eq:tail}, and means that  the jumps to the right are
`heavier' than those to the left.
Assumption~\eqref{ass:non-confinement-on-RP}
is \emph{non-confinement} and rules out uninteresting cases;
it is satisfied if, for example (a) $\Xi$ is an irreducible Markov chain on
a locally finite $\X$ (see e.g.~Corollary~2.1.10 in~\cite{mpow}); (b) for some $\eps >0$ the ellipticity condition $\inf_{x \geq 0} \Pr_x [ 
 \theta  \geq \eps ] \geq \eps$ 
holds (Proposition~3.3.4 in~\cite{mpow}); or (c) in~\eqref{ass:tails-on-RP} we have $x_0 =0$ (Proposition~5.2.1 of~\cite{mpow}).

We present a recurrence classification for $\Xi$
which is  an analogue in the heavy-tailed setting
of Lamperti's classification~\cite{lamp1,lamp3}. Recall from~\eqref{eq:mu} that $\mu(x) := \Exp_x [ \theta]$.

\begin{theorem}  \phantomsection
\label{thm:recurrence-on-RP}
\begin{enumerate}[(i)]
\item  Suppose that~\eqref{ass:non-confinement-on-RP} and~\eqref{ass:tails-on-RP} hold. Then $\Xi$ is recurrent if
\begin{equation}
\label{eq:drift-recurrence} \limsup_{x \to \infty} ( x^{\alpha -1} \mu (x) ) < c \pi | \cosec ( \pi \alpha ) | ,\end{equation}
and $\Xi$ is transient if
\begin{equation}
\label{eq:drift-transience} \liminf_{x \to \infty} ( x^{\alpha -1} \mu (x) ) > c \pi | \cosec ( \pi \alpha ) | .\end{equation}
\item The chain $\Xi$ is positive recurrent if there exist $\alpha \in (1,2)$ and $\gamma < \alpha -1$
such that $\limsup_{x \to \infty} \Exp_x [ | \theta |^\alpha ] < \infty$
and $\limsup_{x \to \infty} ( x^\gamma \mu (x) ) < 0$.
\end{enumerate}
\end{theorem}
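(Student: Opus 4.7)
I would handle both parts by the Lyapunov-function / semimartingale-criterion framework set up in Section~\ref{sec:criteria}, using power-type Lyapunov functions $f_\nu(x) := (x+K)^\nu$ for a small exponent $\nu$ whose sign depends on the conclusion sought, with $K \ge 1$ fixed so that $f_\nu$ is defined and monotone on $\RP$.

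The technical core of part~(i) is the one-step drift expansion
\[
 \Exp_x[f_\nu(\xi_{n+1}) - f_\nu(x)] = \nu x^{\nu - \alpha}\bigl(x^{\alpha - 1}\mu(x) + c A(\nu)\bigr) + o(x^{\nu - \alpha}), \quad x \to \infty,
\]
where
\[
 A(\nu) := \frac{\alpha}{\nu}\int_0^\infty \bigl[(1+u)^\nu - 1 - \nu u \1{u \le 1}\bigr]\, u^{-\alpha-1}\, du,
\]
with the key property $\lim_{\nu\to 0}A(\nu) = \pi\cosec(\pi\alpha) = -\pi|\cosec(\pi\alpha)|$ (recall $\cosec(\pi\alpha) < 0$ on $(1,2)$). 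To derive this I would write
\[
 \Exp_x[f_\nu(\xi_{n+1}) - f_\nu(x)] = \nu x^{\nu-1}\mu(x) + \Exp_x\bigl[f_\nu(x+\theta) - f_\nu(x) - \nu x^{\nu-1}\theta\bigr],
\]
and split the remainder at $|\theta| = \eps x$: Taylor-expand the small-jump part (where the truncated second moment is of order $x^{2-\alpha}$ under~\eqref{ass:tails-on-RP}) so it contributes at the $x^{\nu-\alpha}$ level, and change variable to $u = y/x$ in the upper-tail integral---combined with the uniform-in-$x$ asymptotic of~\eqref{ass:tails-on-RP}---to extract the explicit $c A(\nu)\, x^{\nu-\alpha}$ piece; the moment bound $\Exp_x[\theta_-^\beta] < \infty$ with $\beta > \alpha$ kills large negative jumps at this order. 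The identification of $\lim_{\nu\to 0}A(\nu)$ is a Beta-function computation resting on the reflection identity $\Gamma(s)\Gamma(1-s) = \pi/\sin(\pi s)$, which I would defer to the Appendix.

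Given the expansion, part~(i) follows from the semimartingale criteria. Under~\eqref{eq:drift-recurrence}, pick $\nu > 0$ small enough that $\limsup_x [x^{\alpha-1}\mu(x) + cA(\nu)] < 0$---possible because $cA(\nu) \to -c\pi|\cosec(\pi\alpha)|$ as $\nu \to 0^+$; then $\Exp_x[\Delta f_\nu] < 0$ eventually and, since $f_\nu \to \infty$, the semimartingale recurrence criterion applies. Under~\eqref{eq:drift-transience}, take $\nu < 0$ with $|\nu|$ small so that $\liminf_x [x^{\alpha-1}\mu(x) + cA(\nu)] > 0$; multiplying by $\nu < 0$ again gives $\Exp_x[\Delta f_\nu] < 0$, and since $f_\nu$ is positive, bounded, and satisfies $f_\nu(x) \to 0$ at infinity, the transience criterion applies (the bounded nonnegative supermartingale $f_\nu(\xi_n)$ must converge, and~\eqref{ass:non-confinement-on-RP} forces the limit to be $0$, whence $\xi_n \to \infty$).

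Part~(ii) is easier because the drift dominates the heavy-tail correction. Pick $p \in (1+\gamma, \alpha)$---a nonempty interval because $\gamma < \alpha - 1$---and take $f(x) = (1+x)^p$. The same splitting gives $\Exp_x[\Delta f] = p(1+x)^{p-1}\mu(x) + O(x^{p-\alpha})$; the drift hypothesis forces $p(1+x)^{p-1}\mu(x) \le -c' x^{p-1-\gamma}$, and $p - 1 - \gamma > p - \alpha$ means this dominates, so $\Exp_x[\Delta f] \to -\infty$ and Foster's criterion yields positive recurrence. The main obstacle throughout is the uniform asymptotic analysis in part~(i): every lower-order term must be absorbed into a genuine $o(x^{\nu-\alpha})$ error, uniformly for $\nu$ in a small neighbourhood of $0$, so that one can pass to the limit $A(\nu) \to -\pi|\cosec(\pi\alpha)|$; the sign accounting and the verification of the side conditions of the semimartingale criteria are, by comparison, routine.
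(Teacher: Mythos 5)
Your overall strategy is exactly the paper's: a power Lyapunov function $x^\nu$, a one-step expansion $D(x)=\nu x^{\nu-\alpha}\bigl(x^{\alpha-1}\mu(x)+cA(\nu)\bigr)+o(x^{\nu-\alpha})$ with $A$ continuous at $\nu=0$, the semimartingale recurrence/transience criteria applied with small $\nu>0$ resp.\ $\nu<0$, and Foster's criterion for part~(ii) via the drift-dominant estimate (this is Lemmas~\ref{lem:lyapunov-f0}, \ref{lem:rec-RP}, \ref{lem:trans-RP} and~\ref{lem:big-drift} in the paper). However, there is a genuine error in your key constant. You extract the \emph{full} mean, writing $D=\nu x^{\nu-1}\mu(x)+\Exp_x[f_\nu(x+\theta)-f_\nu(x)-\nu x^{\nu-1}\theta]$, but you then evaluate the remainder against a \emph{truncated} compensator $\nu u\1{u \leq 1}$. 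These do not match, and the mismatch is not lower order: under~\eqref{ass:tails-on-RP} one has $\Exp_x[\theta_+\1{\theta_+>x}]\sim\frac{c\alpha}{\alpha-1}\,x^{1-\alpha}$, which contributes at exactly the critical order $x^{\nu-\alpha}$. Concretely, an integration by parts (keeping the jump of the integrand at $u=1$) gives
\[
A(\nu)=\int_0^\infty\bigl((1+u)^{\nu-1}-1\bigr)u^{-\alpha}\,\ud u+\frac{\alpha}{\alpha-1}
=(1-\nu)\frac{\Gamma(\alpha-\nu)\Gamma(1-\alpha)}{\Gamma(2-\nu)}+\frac{\alpha}{\alpha-1},
\]
so $\lim_{\nu\to0}A(\nu)=\pi\cosec(\pi\alpha)+\frac{\alpha}{\alpha-1}$, not $\pi\cosec(\pi\alpha)$ as you claim. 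A quick sanity check: your formula gives $A(1)=\alpha\int_1^\infty u^{-\alpha}\,\ud u=\frac{\alpha}{\alpha-1}\neq 0$, whereas for $\nu=1$ (where $f$ is linear) the heavy-tail correction must vanish and $D(x)=\mu(x)+o(x^{1-\alpha})$. Carried through, your argument would place the phase boundary at $b+c\pi\cosec(\pi\alpha)+\frac{c\alpha}{\alpha-1}=0$ rather than at $b+c\pi\cosec(\pi\alpha)=0$, so the theorem as stated would not follow.

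The repair is local: drop the indicator. Since $\alpha>1$, the full-compensator integral $\int_0^\infty\bigl[(1+u)^\nu-1-\nu u\bigr]u^{-\alpha-1}\,\ud u$ already converges for every $\nu<\alpha$ (the integrand is $O(u^{1-\alpha})$ at $0$ and $O(u^{-\alpha})$, or $O(u^{\nu-\alpha-1})$ if $\nu>1$, at infinity), and then
\[
A(\nu)=\int_0^\infty\bigl((1+u)^{\nu-1}-1\bigr)u^{-\alpha}\,\ud u=(1-\nu)\frac{\Gamma(\alpha-\nu)\Gamma(1-\alpha)}{\Gamma(2-\nu)},
\]
which is the paper's $\kappa_0(\nu)$ from~\eqref{eq:kappa-0} and does satisfy $A(0)=\Gamma(\alpha)\Gamma(1-\alpha)=\pi\cosec(\pi\alpha)$. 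With that correction your sign bookkeeping, the application of the recurrence/transience criteria, and part~(ii) (which is the paper's Lemma~\ref{lem:big-drift} plus Foster, with the same exponent comparison $p-1-\gamma>p-\alpha$) all go through.
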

\begin{remarks}
(i) Note that $\cosec (\pi \alpha ) <0$ for $\alpha \in (1,2)$, so
part~(i) says that the case $\mu (x) \leq 0$ is recurrent, as one would expect,
and shows what magnitude of positive drift must be added to achieve transience.

(ii) Part~(i) says that the critical case where $\mu (x) \sim b x^{1-\alpha}$
for some $b < c \pi | \cosec (\pi \alpha) |$ is recurrent. In fact, this case is null-recurrent by Theorem~\ref{thm:moments-on-RP}(ii)
below.

(iii) For all our results, the Markov property is not essential, and can be dispensed~with
without complicating the proofs, only the notation: cf.~\cite{lamp1} or Chapter~3 of~\cite{mpow}.
\end{remarks}

For the rest of the results that we present,
we will also assume an asymptotic form for the drift.
This is partly for simplicity of statement, but also necessary
to get sharp transitions in our existence-of-moments results.
The assumption is as follows.

\begin{description}
\item\namedlabel{ass:drift}{$\text{D}_{\gamma,b}$}
There exist constants $\gamma \geq 0$ and $b \in \R$ such that
  $\lim_{x \to \infty}( x^\gamma \mu (x) ) = b$.
\end{description}

In the recurrent case, we can precisely quantify the recurrence by showing which moments of passage times
exist.  Here and elsewhere $\Gamma$ denotes the (Euler) gamma function.

\begin{theorem}
\label{thm:moments-on-RP}
Suppose that the Markov chain $\Xi$ on  $\RP$~satisfies~\eqref{ass:tails-on-RP} and~\eqref{ass:drift}.  Let $q >0$.
 Then for all $a$ sufficiently large and all $x>a$, the following hold.
\begin{enumerate}[(i)]
\item If $\gamma > \alpha - 1$ then $\Exp_x[\tau_a^q] < \infty$ for $q<1/\alpha$, and $\Exp_x[\tau_a^q] = \infty$ for $q >1/\alpha$.
\item If $\gamma = \alpha -1$ and $b < c \pi | \cosec(\pi\alpha)|$, then
  $\Exp_x[\tau_a^q] < \infty$ for $q<\nu^\star/\alpha$, and $\Exp[\tau_a^q] = \infty$ for $q>\nu^\star/\alpha$,
  where $\nu^\star := \nu^\star (\alpha,b,c)$ is the unique solution in $(0,\alpha)$
  to
\begin{align}\label{eqn:nu-star}
\frac{b}{c} & = (\nu^\star-1)\frac{\Gamma(1-\alpha)\Gamma(\alpha-\nu^\star)}{\Gamma(2-\nu^\star)}.% \\
%\frac{b}{c} & = - \frac{ \sin (\pi \nu^\star )}{\sin (\pi \alpha) } B ( \nu^\star, \alpha - \nu^\star).
\end{align}
\item If $\gamma < \alpha-1$ and $b<0$ then $\Exp_x[\tau_a^q] < \infty$ for $q<
  \frac{\alpha}{\gamma+1}$, and $\Exp_x[\tau_a^q] = \infty$ for $q \geq
    \frac{\alpha}{\gamma+1}$.
\end{enumerate}
\end{theorem}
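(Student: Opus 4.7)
The plan is to apply the semimartingale criteria for passage-time moments developed in Section~\ref{sec:criteria} to a one-parameter family of Lyapunov functions $f_\nu(x) = x^\nu$, $\nu \in (0, \alpha)$, drawing on the asymptotic expansions of $\Exp_x[f_\nu(\xi_1) - f_\nu(x)]$ that are the subject of Section~\ref{sec:lyapunov}. Under \eqref{ass:tails-on-RP} and \eqref{ass:drift}, one expects these expansions to take the schematic form
\[
\Exp_x\bigl[f_\nu(\xi_1) - f_\nu(x)\bigr] = \nu b\, x^{\nu - \gamma - 1} + c\, I_\alpha(\nu)\, x^{\nu - \alpha} + o\bigl(x^{\nu - \min(\alpha,\, \gamma+1)}\bigr),
\]
where $I_\alpha(\nu)$ is a Beta-type integral whose value, up to positive constants, is $(\nu-1)\Gamma(1-\alpha)\Gamma(\alpha-\nu)/\Gamma(2-\nu)$; it is this integral that gives rise to the defining equation \eqref{eqn:nu-star} for $\nu^\star$.

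For the finiteness direction I would use a Foster--Lyapunov criterion of the form: if $f_\nu > 0$ and $\Exp_x[f_\nu(\xi_1) - f_\nu(x)] \leq -k\, x^{\nu - \mu}$ on $\{x > A\}$ for some $k, \mu > 0$, together with suitable integrability of the increments of $f_\nu(\xi_n)$, then $\Exp_x[\tau_a^q] < \infty$ for every $q < \nu/\mu$. In case~(i), choose $\nu \in (0,1)$: since $\gamma+1 > \alpha$ the tail term dominates, and concavity of $f_\nu$ gives $I_\alpha(\nu) < 0$, so $\mu = \alpha$ and $q < \nu/\alpha$; sending $\nu \uparrow 1$ yields $q < 1/\alpha$. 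In case~(ii), choose $\nu < \nu^\star$: the bracket $\nu b + c I_\alpha(\nu)$ is strictly negative on $(0, \nu^\star)$ by the definition of $\nu^\star$, so again $\mu = \alpha$ and $q < \nu/\alpha$; sending $\nu \uparrow \nu^\star$ yields $q < \nu^\star/\alpha$. In case~(iii), any $\nu \in (0, \alpha)$ works: since $\gamma+1 < \alpha$ the drift term dominates with sign $\nu b < 0$, so $\mu = \gamma+1$ and $q < \nu/(\gamma+1)$; sending $\nu \uparrow \alpha$ yields $q < \alpha/(\gamma+1)$.

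For the infiniteness direction I would use the reverse criterion from Section~\ref{sec:criteria}: if $\Exp_x[f_\nu(\xi_1) - f_\nu(x)] \geq -k\, x^{\nu - \mu}$ on a suitable set and the upward jumps have tail at least of order $y^{-\alpha}$, then $\Exp_x[\tau_a^q] = \infty$ for $q > \nu/\mu$. In cases~(i) and~(ii), pick $\nu$ just above the relevant critical threshold, so that the leading coefficient changes sign; letting $\nu$ approach the threshold from above gives infiniteness for $q > 1/\alpha$ and $q > \nu^\star/\alpha$ respectively. For case~(iii) the sharp \emph{closed} boundary $q \geq \alpha/(\gamma+1)$ is obtained via a direct \emph{one big jump} argument: from a point near $a$ the chain jumps in one step to height $\geq y$ with probability of order $y^{-\alpha}$, and from such a height the negative drift $\mu(x) \sim b x^{-\gamma}$ forces a return time of order $y^{\gamma+1}$ (formalised via a Lyapunov estimate for $f(x) = x^{\gamma+1}$). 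Integrating $y^{q(\gamma+1)} \cdot y^{-\alpha-1}$ over large $y$ produces a divergent integral for every $q \geq \alpha/(\gamma+1)$ (with logarithmic divergence at equality), which gives $\Exp_x[\tau_a^q] = \infty$ throughout the full closed range.

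The main obstacle is the careful derivation of the Lyapunov expansion in Step~1. Computing $\Exp_x[(x+\theta)^\nu - x^\nu]$ to the required precision calls for splitting the integral at scale $\eps x$: the small-jump region is handled by a Taylor expansion that isolates $\nu x^{\nu-1}\mu(x) = \nu b x^{\nu - \gamma - 1}$, while the tail region is processed using \eqref{ass:tails-on-RP} and an integration by parts, with the resulting Beta-integral evaluated via standard $\Gamma$-function identities (collected in the Appendix). A secondary subtlety is that the criterion must be applied with $\nu$ arbitrarily close to $\alpha$, where the $\nu$-th moment of $\theta$ only barely exists; the assumption $\beta > \alpha$ in \eqref{ass:tails-on-RP}, which gives uniform control of the negative tail, is what allows the argument to close uniformly in $\nu$.
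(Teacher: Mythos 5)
Your proposal follows essentially the same route as the paper: the same family of Lyapunov functions $x^\nu$ with the expansion $\nu b\,x^{\nu-\gamma-1}+c\nu\kappa_0(\nu)x^{\nu-\alpha}+o(\cdot)$ (the paper's Lemma~\ref{lem:lyapunov-f0}), the Aspandiiarov--Iasnogorodski--Menshikov criteria (Lemmas~\ref{lem:finite-moments} and~\ref{lem:infinite-moments}) applied with $\nu$ approaching the critical value from below and above, and a one-big-jump argument (the paper's Lemma~\ref{lem:big-jump}) for the closed boundary $q\geq\alpha/(\gamma+1)$ in part~(iii), where the supermartingale route cannot give non-existence. The only detail worth tightening is the uniqueness of $\nu^\star$, which the paper settles by showing $\kappa_0$ is strictly increasing on $[0,\alpha)$ via digamma-function monotonicity rather than by appeal to concavity.
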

\begin{remarks}
(i) In parts~(i) and~(ii), we always have null recurrence ($\Exp_x [ \tau_a ] =\infty$),
while in part~(iii) we have positive recurrence ($\Exp_x [ \tau_a ] < \infty$).

(ii) If $\Xi$ is a martingale satisfying assumptions~\eqref{ass:non-confinement-on-RP} and~\eqref{ass:tails-on-RP},
 then~\eqref{ass:drift} is automatically satisfied (for $b = 0$ and any~$\gamma$).
Then $\Xi$ is recurrent
by Theorem~\ref{thm:recurrence-on-RP}(i), and Theorem~\ref{thm:moments-on-RP}(i) shows that
 the critical exponent for $\tau_a$ is $1/\alpha \in (1/2,1)$;
in the case of a spatially homogeneous random walk,
this fact is essentially contained in a result of Doney~\cite{doney}. In this sense, such a martingale with heavy tails away from the origin is \emph{more} recurrent than
simple symmetric random walk on $\RP$, but still null recurrent.
\end{remarks}

\subsection{Walks on the whole line: heavier outward tails}
\label{sec:R-results1}

Now we turn to the case $\X \subseteq \R$.
We first suppose that
for $x>0$, essentially the same conditions~\eqref{ass:tails-on-RP} and~\eqref{ass:drift} as above hold,
while for $x <0$ the symmetric version of those conditions holds, so that 
the outwards increment always has a heavier tail than the
inwards one. This case closely corresponds to the walks on $\RP$ of
the previous section. 
In full, the assumptions are as follows. We extend the definition of $\mu(x) := \Exp_x [ \theta]$ to $x \in \R$.
\begin{description}
\item\namedlabel{ass:non-confinement-on-R}{$\text{N}'$} We have $\limsup_{n \to \infty} |\xi_n| = +\infty$, a.s.
\item\namedlabel{ass:tails-on-R-out}{$\Tout$} 
There exist constants $\alpha \in (1,2)$, $\beta > \alpha$, $c > 0$, and $x_0 \in \RP$ such that
\begin{align*}
& {} \lim_{y \to \infty} \sup_{x \geq x_0} \big| y^\alpha \Pr_x[\theta_+ > y] -c \big|  = \lim_{y \to \infty}
  \sup_{x \leq -x_0} \big| y^\alpha \Pr_x[\theta_- > y] - c \big| = 0, \\
& {}	\sup_{x \geq x_0} \Exp_x[\theta_-^\beta] < \infty, \text{ and } \sup_{x \leq - x_0} \Exp_x[\theta_+^\beta] < \infty . \end{align*}
\item\namedlabel{ass:R-drift}{$\text{D}'_{\gamma,b}$} There exist constants $\gamma \geq 0$ and
  $b \in \R$ such that
\[
\lim_{x \to +\infty}( x^\gamma \mu (x) ) = \lim_{x \to -\infty} ( -|x|^\gamma
\mu (x) ) = b.
\]
\end{description}

For $\Xi$ on $\R$ we now say that $\Xi$ is transient if $\lim_{n \to \infty} | \xi_n | = \infty$, a.s.,
and recurrent if $\liminf_{n \to \infty} | \xi_n | \leq r_0$, a.s.,
for some $r_0 \in \RP$. Setting $\tau_a := \min \{n \in \ZP : |\xi_n| \leq a\}$,
positive and null recurrence are defined analogously to the case of $\RP$.
With transience, however, there are two distinct possibilities.
We say that $\Xi$ is \emph{oscillatory transient} if, a.s.,
\[ \lim_{n \to \infty}  | \xi_n | = \infty,\;\;\text{and}\;\;{-\infty} = \liminf_{n \to \infty} \xi_n < \limsup_{n \to \infty} \xi_n = +\infty .\]
On the other hand, we say that $\Xi$ is \emph{directional transient}
if $\lim_{n \to \infty} \xi_n \in \{ -\infty, +\infty \}$, a.s.
In the following theorem only directional transience appears;
we will see instances of oscillatory transience in the next sections.

%The recurrence classification in this setting is essentially the same as that on $\RP$.

\begin{theorem}\label{thm:recurrence-on-R-out}
Suppose that the Markov chain $\Xi$ on $\R$~satisfies~\eqref{ass:non-confinement-on-R},
\eqref{ass:tails-on-R-out}, and~\eqref{ass:R-drift}.  Then the following classification holds.
\begin{enumerate}[(i)]
\item If $\gamma > \alpha-1$ then $\Xi$ is null recurrent.
\item If $\gamma < \alpha-1$ and $b < 0$  then $\Xi$ is positive recurrent.
\item If $\gamma < \alpha-1$ and $b > 0$ then $\Xi$ is directional transient.
\item If $\gamma = \alpha-1$ and $b < c\pi | \cosec(\pi\alpha) |$ then $\Xi$ is null recurrent.
\item If $\gamma = \alpha-1$ and $b > c\pi |\cosec(\pi\alpha) |$ then $\Xi$ is directional transient.
\end{enumerate}
\end{theorem}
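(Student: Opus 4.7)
My plan is to reduce the five cases to the half-line classification of Theorem~\ref{thm:recurrence-on-RP} and the moment results of Theorem~\ref{thm:moments-on-RP} applied to the radial process $\eta_n := |\xi_n|$. The remark following Theorem~\ref{thm:moments-on-RP} notes that the Markov property is inessential for those half-line proofs, so I may apply those results to the (non-Markov) process $\eta_n$. The heart of the reduction is to verify that, uniformly in $|x| \geq x_0$, the one-step increment $\eta_{n+1} - \eta_n$ given $\xi_n = x$ inherits the analogues of~\eqref{ass:tails-on-RP} and~\eqref{ass:drift} with the same exponents $\alpha, \beta$ and constants $b, c$. Away from the event that $\theta$ crosses the origin, $\eta_{n+1} - \eta_n = \sign(x)\theta$ exactly, which carries the tail $cy^{-\alpha}$ and drift $b|x|^{-\gamma}$ directly from \eqref{ass:tails-on-R-out} and \eqref{ass:R-drift}. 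The crossing event has probability $O(|x|^{-\beta})$ by Markov's inequality applied to $\Exp_x[\theta_-^\beta]$ (or its mirror), and since $\beta > \alpha$ its contribution to both the tail and the drift of $\eta_{n+1}-\eta_n$ is of lower order than the terms needed to apply the half-line classification.

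With the reduction in place, cases (i), (ii) and (iv) follow immediately. In (i) and (iv), Theorem~\ref{thm:recurrence-on-RP}(i) applied to $\eta_n$ gives recurrence, since the effective $\limsup_{x \to \infty} x^{\alpha-1}\Exp[\eta_{n+1}-x\mid \eta_n=x]$ equals $0$ in case (i) (because $\gamma > \alpha-1$) and $b$ in case (iv), both strictly less than $c\pi|\cosec(\pi\alpha)|$; null recurrence is then inherited from Theorem~\ref{thm:moments-on-RP}(i) and~(ii) respectively, applied to $\eta_n$. In case (ii), Theorem~\ref{thm:recurrence-on-RP}(ii) applied to $\eta_n$ gives positive recurrence. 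The transience cases (iii) and (v) require extra work: Theorem~\ref{thm:recurrence-on-RP}(i) applied to $\eta_n$ only yields $|\xi_n| \to \infty$ a.s., and one must rule out oscillatory transience. I would do this with a Borel--Cantelli crossover argument. From $\xi_n = x$ with $x > x_0$, a sign change at the next step requires $\theta_- > x$, which has probability $O(x^{-\beta})$; combined with a polynomial lower bound $|\xi_n| \gtrsim n^{1/(1+\gamma)}$ in case (iii) and $|\xi_n| \gtrsim n^{1/\alpha}$ in case (v), obtainable from a Lyapunov-function submartingale bound on a power of $|\xi_n|$, the series $\sum_n |\xi_n|^{-\beta}$ converges almost surely because $\beta > \alpha > 1+\gamma$ (respectively $\beta > \alpha$) in the two cases. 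Hence only finitely many sign changes occur a.s., and $\xi_n$ has a definite sign for all large $n$.

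The main obstacle I anticipate is securing the quantitative polynomial lower bound on $|\xi_n|$ with enough uniformity to justify the Borel--Cantelli step. This calls for analysing a Lyapunov function of the form $|x|^s$ for a suitably chosen exponent~$s$, whose one-step drift involves truncated moment integrals of the heavy-tailed increment --- precisely the integrals collected in the Appendix, and the very computations that yield the $\Gamma$-function identity~\eqref{eqn:nu-star} and the critical constant $\pi|\cosec(\pi\alpha)|$. Once these estimates are in place, the directional-transience criterion promised in Section~\ref{sec:criteria} should package them into a clean conclusion for cases (iii) and~(v), and the full classification follows.
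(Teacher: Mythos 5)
Your reduction of the recurrence statements to the half-line results via $\eta_n := |\xi_n|$ is essentially sound, and in fact it is close in spirit to what the paper does: the two-sided Lyapunov function $f_2$ used in the paper satisfies $f_2(\xi_n) = f_0(|\xi_n|)$, so Lemma~\ref{lem:lyapunov-R-out} is precisely the verification that the half-line increment estimates survive the (probability $O(|x|^{-\beta})$) origin-crossing event, which is the computation you sketch. With the minor caveat that one should pass to a smaller $\gamma' \in (\alpha-1,\beta-1)$ when $\gamma \geq \beta -1$ in case~(i) so that the drift hypothesis~\eqref{ass:drift} genuinely transfers to $\eta_n$, parts~(i), (ii) and~(iv) go through.

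The gap is in cases~(iii) and~(v), where you must upgrade $|\xi_n| \to \infty$ to directional transience. Your Borel--Cantelli argument hinges on an almost-sure polynomial lower bound $|\xi_n| \gtrsim n^{1/(1+\gamma)}$ (resp.\ $n^{1/\alpha}$) holding for all large $n$, and this is not established anywhere, nor does it follow from ``a submartingale bound on a power of $|\xi_n|$'': submartingale maximal inequalities control $\max_{m \leq n} |\xi_m|$ from above, which is the wrong direction, while the supermartingale $|\xi_n|^{-\lambda}$ only bounds the probability of ever descending from a fixed level to a lower fixed level. To make your route work you would need a genuine growth-rate (or annulus occupation-time) estimate --- e.g.\ that the expected time spent in $[2^r, 2^{r+1})$ is $O(2^{rs})$ for some $s<\beta$ so that $\sum_n |\xi_n|^{-\beta} < \infty$ a.s.\ and L\'evy's conditional Borel--Cantelli applies --- and such estimates for transient Lamperti-type chains with heavy tails are a substantive piece of work in their own right. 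The paper avoids all of this by using the \emph{one-sided} Lyapunov function $f_1$ (equal to $x^\nu$ for $x \geq 1$ and to $1$ for $x < 1$) with $\nu < 0$: Lemma~\ref{lem:lyapunov-R-out} shows $f_1(\xi_n)$ and $f_1(-\xi_n)$ are supermartingales on $\{\xi_n > x_1\}$ and $\{\xi_n < -x_1\}$ respectively when $\nu(b + c\kappa_0(\nu)) < 0$, and Lemma~\ref{lem:directional-transience} then delivers $\lim_{n\to\infty}\xi_n \in \{-\infty,+\infty\}$ directly, with no growth estimates needed. I would recommend replacing your Borel--Cantelli step with that criterion; as written, the transience half of your proof does not close.
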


The moments result here is essentially the same as that on $\RP$ from Theorem~\ref{thm:moments-on-RP}.

\begin{theorem}
\label{thm:moments-on-R-out}
Suppose that the Markov chain $\Xi$ on  $\R$~satisfies~\eqref{ass:tails-on-R-out} and~\eqref{ass:R-drift}. 
 Then the statements in Theorem~\ref{thm:moments-on-RP} hold
verbatim, given that $\tau_a = \min \{n \in \ZP : |\xi_n| \leq a\}$.
\end{theorem}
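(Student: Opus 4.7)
My plan is to reduce the two-sided statement to the half-line result of Theorem~\ref{thm:moments-on-RP} by folding the process: set $X_n := |\xi_n|$, an $\RP$-valued (in general non-Markov) process whose hitting time $\min\{n : X_n \leq a\}$ coincides with the $\tau_a$ defined in this theorem. Since Remark~(iii) following Theorem~\ref{thm:recurrence-on-RP} notes that the Markov property is inessential for the one-sided arguments --- what is required is that the conditional law of the increment given the present state has the correct tail and drift asymptotics --- it will suffice to check that $X_n$ satisfies~\eqref{ass:tails-on-RP} and~\eqref{ass:drift} with the same parameters $\alpha, \beta, c, \gamma, b$, in the sense of conditioning on $\xi_n$ rather than $X_n$.

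By the symmetry of~\eqref{ass:tails-on-R-out} and~\eqref{ass:R-drift}, I only need to treat $\xi_n = x > x_0$; the case $\xi_n = -x$ is identical after swapping $\theta_+ \leftrightarrow \theta_-$. Splitting on the sign of $\xi_{n+1}$, one has $X_{n+1} - X_n = \theta$ on $\{\theta \geq -x\}$ and $X_{n+1} - X_n = -2x - \theta$ on $\{\theta < -x\}$. Hence
\begin{align*}
\Pr_x[X_{n+1} - X_n > y] = \Pr_x[\theta > y] + \Pr_x[\theta < -(2x+y)],
\end{align*}
and the last term is at most $\Exp_x[\theta_-^\beta]/(2x+y)^\beta$, which is uniformly $o(y^{-\alpha})$ because $\beta > \alpha$; the $\beta$-th moment of $(X_{n+1} - X_n)_-$ is dominated by $\Exp_x[\theta_-^\beta]$. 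A similar decomposition yields
\begin{align*}
\Exp_x[X_{n+1} - X_n] = \mu(x) - 2\Exp_x\bigl[(x + \theta) \1{\theta < -x}\bigr],
\end{align*}
whose correction term is $O(x^{1-\beta})$ by the same moment bound. Since $\beta > \alpha$, this correction is negligible in each of the three regimes of Theorem~\ref{thm:moments-on-RP}: in cases~(ii) and~(iii) it is smaller than the leading drift $b x^{-\gamma}$, and in case~(i) we may replace $\gamma$ by any $\gamma' \in (\alpha - 1, \beta - 1)$ without altering the conclusion.

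Having reduced the problem to a non-Markov $\RP$-valued process satisfying the same asymptotic increment conditions as in Theorem~\ref{thm:moments-on-RP}, I would invoke the Lyapunov-function analysis of Section~\ref{sec:lyapunov} applied to $f(|\xi_n|)$; the sign-flip contributions alter the half-line estimates only by terms of order $x^{\nu - \beta}$, dominated by the $x^{\nu - \alpha}$ leading order, so all critical exponents, including the root $\nu^\star$ defined by~\eqref{eqn:nu-star}, are preserved. The main obstacle is bookkeeping: verifying that each Lyapunov estimate used in the $\RP$ proof can be rewritten for the symmetrized function $|\xi|^\nu$, and that the sign-flip error terms are uniformly of smaller order than the leading Lamperti-type terms. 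Once this is checked, the three cases of the theorem follow exactly as on the half line, with $\tau_a = \min\{n : X_n \leq a\}$.
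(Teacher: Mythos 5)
Your proposal is correct and takes essentially the same route as the paper: the paper also reduces to the half-line argument via $Y_n = f_2(\xi_n) = f_0(|\xi_n|)$, using Lemma~\ref{lem:lyapunov-R-out} to show the origin-crossing contributions are $o(|x|^{\nu-\alpha})$, and then invokes Lemma~\ref{lem:big-jump} symmetrically for the regime $\gamma<\alpha-1$. The only cosmetic difference is that you control the crossing corrections at the level of the folded increment law (verifying~\eqref{ass:tails-on-RP} and~\eqref{ass:drift} for $|\xi_n|$) rather than inside the Lyapunov computation, which requires the non-Markov form of Theorem~\ref{thm:moments-on-RP} that the paper's remarks already sanction.
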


\subsection{Walks on the whole line: heavier inward tails}
\label{sec:R-results2}

Again with $\X \subseteq \R$,
we now flip things around  so that the \emph{inwards} increment has the heavier tail.
Here more interesting phenomena occur.
Our assumptions   are as follows.

\begin{description}
\item\namedlabel{ass:tails-on-R-in}{$\Tin$} 
There exist constants $\beta \in (1,2)$, $\alpha > \beta$, $c > 0$, and $x_0\in\RP$ such that
\begin{align*}
& {} \lim_{y \to \infty} \sup_{x \geq x_0} \big| y^\beta \Pr_x[\theta_- > y] - c \big| =
\lim_{y\to \infty} \sup_{x \leq -x_0} \big| y^\beta \Pr_x[\theta_+ > y] - c \big| = 0, \\
& {}	\sup_{x \geq x_0}   \Exp_x[\theta_+^\alpha] < \infty, \text{ and } \sup_{x \leq - x_0} \Exp_x[\theta_-^\alpha] < \infty . \end{align*}
\end{description}
We also assume~\eqref{ass:R-drift}, and,
where necessary, the following additional condition.
\begin{description}
\item\namedlabel{ass:inwards-tail+}{$\text{T}_\delta^+$} 
With $\beta$, $c$, and $x_0$ as in~\eqref{ass:tails-on-R-in}, there exists a constant
  $\delta > 0$ such that
	\[ \sup_{x \geq x_0} \big| y^\beta \Pr_x[\theta_- > y] - c \big| =
  O(y^{-\delta}), \text{ and } \sup_{x \leq -x_0} \big| y^\beta \Pr_x[\theta_+ > y] - c \big| = O(y^{-\delta}). \]
\end{description}

Our recurrence classification in this case is as follows.

\begin{theorem}\label{thm:recurrence-on-R-in} Suppose that the Markov chain $\Xi$ on $\R$~satisfies~\eqref{ass:non-confinement-on-R}, 
\eqref{ass:tails-on-R-in}, and~\eqref{ass:R-drift}.  
Then the following recurrence/transience criteria hold.
\begin{enumerate}[(i)]
\item If $\gamma > \beta-1$ and $\beta > 3/2$ then $\Xi$ is null recurrent.
\item If $\gamma < \beta-1$ and $b < 0$  then $\Xi$ is positive recurrent.
\item If $\gamma < \beta-1$ and $b > 0$ then $\Xi$ is directional transient.
\item If $\gamma = \beta-1$ and $b + c\pi\cot(\pi\beta) < 0$ then $\Xi$ is null recurrent.
\end{enumerate}
If assumption~\eqref{ass:inwards-tail+} is also satisfied, then the following
transience criteria hold.
\begin{enumerate}[(i)]
\setcounter{enumi}{4}
\item If $\gamma > \beta-1$ and $\beta < 3/2$ then $\Xi$ is oscillatory transient.
\item If $\gamma = \beta-1$ and $b + c\pi\cot(\pi\beta) > 0$ then $\Xi$ is oscillatory
 transient. 
\end{enumerate}
\end{theorem}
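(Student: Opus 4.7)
The plan is to apply the Lyapunov-function criteria from Section~\ref{sec:criteria} to the test functions $f_\nu(x) = |x|^\nu$, using the drift expansions computed in Section~\ref{sec:lyapunov}. The central input is that, under~\eqref{ass:tails-on-R-in} and~\eqref{ass:R-drift}, for $|x|\to\infty$ and $\nu$ in a neighbourhood of $0$,
\[
\Exp_x[f_\nu(\xi_{n+1}) - f_\nu(\xi_n)] = \nu b \,|x|^{\nu-1-\gamma} + \nu c \pi \cot(\pi\beta)\,|x|^{\nu-\beta} + \text{lower-order terms},
\]
where the first contribution comes from the mean drift and the second from the heavy inward tail (via a Mellin-type integral whose derivative at $\nu=0$ equals $\pi\cot(\pi\beta)$). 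Which of these two $O(\nu)$ terms dominates is controlled by the sign of $\gamma - (\beta-1)$, while at $\gamma = \beta - 1$ they combine additively. The sign change of $\cot(\pi\beta)$ on $(1,2)$, occurring at $\beta = 3/2$, is the source of the dichotomy between (i) and (v).

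Parts (i)--(iv) follow by choosing $\nu$ appropriately and invoking Foster-type criteria. In (ii) and (iii) with $\gamma < \beta-1$, the drift term dominates: for $b < 0$, the function $f_1(x)=|x|$ has negative drift outside a bounded set, yielding positive recurrence; for $b>0$, $f_{-\nu}$ with small $\nu>0$ is a bounded positive supermartingale, giving $|\xi_n|\to\infty$, and directionality in (iii) follows because the inward crossings occur on a scale $|x|^{-\beta}$ slower than the drift $|x|^{-\gamma-1}$, so a one-sided Lyapunov function confines the walk to one half-line. In (i) with $\gamma > \beta-1$, the heavy-tail term dominates and $\cot(\pi\beta)<0$ for $\beta>3/2$, so $f_\nu$ is a supermartingale for small $\nu>0$; in (iv) at $\gamma=\beta-1$, the drift coefficient of $f_\nu$ is $\nu(b+c\pi\cot(\pi\beta))$, negative by hypothesis. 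Null recurrence in both (i) and (iv) is then extracted by a second choice of Lyapunov function, paralleling the proof of Theorem~\ref{thm:moments-on-RP}(ii).

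The principal interest lies in (v) and (vi), the oscillatory transience cases, and the argument splits into two parts. First, to show $|\xi_n|\to\infty$ a.s., apply the above expansion to $f_{-\nu}$ with $\nu > 0$ small: the leading coefficient is $-\nu c\pi\cot(\pi\beta)|x|^{-\nu-\beta}$ in case (v), and $-\nu(b+c\pi\cot(\pi\beta))|x|^{-\nu-\beta}$ in case (vi), strictly negative by the hypotheses $\beta < 3/2$ (so $\cot(\pi\beta)>0$) and $b+c\pi\cot(\pi\beta)>0$, respectively. Hence $f_{-\nu}(\xi_n)$ is a bounded positive supermartingale, converges a.s., and the usual concentration-of-limits argument forces $|\xi_n|\to\infty$. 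Second, to rule out directional transience, we must show both $\liminf\xi_n=-\infty$ and $\limsup\xi_n=+\infty$. The heavy inward tail gives a one-step crossing estimate $\Pr_x[\sign \xi_{n+1} \neq \sign x] \geq c' |x|^{-\beta}$, made uniform in $x$ by~\eqref{ass:inwards-tail+}; combined with a further Lyapunov bound on the growth rate of $|\xi_n|$ that forces $\sum_n |\xi_n|^{-\beta}=\infty$ a.s., a conditional Borel--Cantelli argument (which I expect to be packaged as a dedicated oscillatory-transience criterion in Section~\ref{sec:criteria}) then yields sign changes infinitely often.

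The main obstacle is this oscillation step, specifically the need to make the growth control and the crossing control compatible. Bare tail asymptotics do not suffice, because the divergence of $\sum|\xi_n|^{-\beta}$ is sensitive to small changes in the effective growth exponent of $|\xi_n|$, and the implicit constants in the tail estimates feed directly into this sum. The strengthened tail assumption~\eqref{ass:inwards-tail+} provides the quantitative margin needed to control the error terms in the Lyapunov drifts for both halves of the argument simultaneously, making the growth and crossing estimates jointly tight enough to conclude.
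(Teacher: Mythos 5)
Your treatment of parts (i)--(iv), and of the first half of (v)--(vi) (showing $|\xi_n|\to\infty$ via the two-sided function $f_2^\nu$ with $\nu<0$ small), matches the paper's argument in substance. But your argument for the oscillation itself has a genuine gap. You propose to rule out directional transience via a one-step sign-crossing estimate $\Pr_x[\sign\xi_{n+1}\neq\sign\xi_n]\geq c'|x|^{-\beta}$ together with a conditional Borel--Cantelli argument, which requires $\sum_n|\xi_n|^{-\beta}=\infty$ a.s. That divergence is exactly borderline (one expects $|\xi_n|$ to grow like $n^{1/\beta}$, so the summands are of order $n^{-1}$), and you acknowledge that the requisite growth control is the main obstacle but do not supply it; asserting that~\eqref{ass:inwards-tail+} ``provides the quantitative margin'' is not a proof. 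In fact~\eqref{ass:inwards-tail+} plays no role in the oscillation step of the paper: it is needed only to extend the expansion of $D_2$ in Lemma~\ref{lem:lyapunov-R-in} to negative $\nu$, i.e.\ precisely for the $|\xi_n|\to\infty$ half that you have already handled. As written, the oscillation halves of (v) and (vi) are unproved.

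The paper's route avoids the growth-control problem entirely, and the key structural fact is one your expansion misses. For the one-sided function $f_1^\nu$ ($x^\nu$ for $x\geq1$, constant for $x<1$), Lemma~\ref{lem:lyapunov-R-in} gives $D_1(x)=\nu b x^{\nu-1-\gamma}+c\,x^{\nu-\beta}\kappa_1(\nu)+o(x^{\nu-\beta})$ with $\kappa_1(\nu)=-1+O(\nu)$: the heavy inward tail contributes a term of order $-c\,x^{\nu-\beta}$, \emph{not} $O(\nu)x^{\nu-\beta}$, because a single large inward jump collapses $f_1^\nu$ from $x^\nu$ down to $1$. Hence for small $\nu>0$ both $D_1(x)$ and its mirror $D_1'(x)$ are negative for large $x$ \emph{regardless} of the sign of $b+c\pi\cot(\pi\beta)$, so $f_1^\nu(\xi_n)$ is a supermartingale above a level on each half-line; Lemma~\ref{lem:left-recur-R} then gives $\liminf_n\xi_n<\infty$ and $\limsup_n\xi_n>-\infty$ a.s., which combined with $|\xi_n|\to\infty$ forces oscillation (this is packaged as Lemma~\ref{lem:oscillatory-transience}). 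If you insist on the Borel--Cantelli route you would need an a.s.\ lower bound on the occupation of scales that forces $\sum_n|\xi_n|^{-\beta}=\infty$; this is substantially harder than the one-sided Lyapunov computation and is not something the hypotheses hand you for free.
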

\begin{remarks}
(i) For zero drift ($b=0$), the phase transition at $\beta = 3/2$
was first observed by Rogozin \& Foss~\cite{rf}, assuming that the law of $\theta$ depends only on the sign of $\xi_0$.

(ii) Note that $\cot(\pi \beta)$ is decreasing on $\beta \in (1,2)$, tends to $+\infty$ as $\beta \downarrow 1$,
to $-\infty$ as $\beta \uparrow 2$, 
and changes sign at $\beta = 3/2$. 
\end{remarks}
 
\begin{theorem}
\label{thm:moments-on-R-in}
Suppose that the Markov chain $\Xi$ on  $\RP$~satisfies~\eqref{ass:tails-on-R-in} and~\eqref{ass:R-drift}.  Let $q >0$.
  Then for all $a$ sufficiently large and all $x>a$, the following hold.
\begin{enumerate}[(i)]
\item If $\gamma > \beta - 1 > 1/2$ then $\Exp_x[\tau_a^q] < \infty$ for $q<\frac{2\beta-3}{\beta}$, and $\Exp_x[\tau_a^q] = \infty$ for $q >\frac{2\beta-3}{\beta}$.
\item If $\gamma = \beta -1$ and $b + c \pi \cot(\pi\beta) < 0$, then
  $\Exp_x[\tau_a^q] < \infty$ for $q< \nu^\star / \beta$, and $\Exp[\tau_a^q] = \infty$ for $q>\nu^\star / \beta$,
  where 
	$\nu^\star := \nu^\star (\beta,b,c)$ is the unique solution in $(0,\beta)$
  to
\begin{align}\label{eqn:nu-star2}
\frac{b}{c} & = \Gamma (\nu^\star ) \left( \frac{(1-\beta+\nu^\star ) \Gamma(1-\beta) }{\Gamma(2-\beta+\nu^\star )} - \frac{\Gamma (\beta-\nu^\star )}{\Gamma(\beta)} \right).%\\
%\frac{b}{c} & = \frac{ \sin(\pi (\beta-\nu^\star)) - \sin(\pi \beta) }{ \sin(\pi \beta) }
%B(\nu^\star , \beta - \nu^\star).
\end{align}
\item If $\gamma < \beta-1$ and $b<0$ then $\Exp_x[\tau_a^q] < \infty$ for $q<
  \frac{\beta}{\gamma+1}$, and $\Exp_x[\tau_a^q] = \infty$ for $q \geq
    \frac{\beta}{\gamma+1}$.
\end{enumerate}
\end{theorem}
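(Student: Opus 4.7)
The plan is to adapt the Lyapunov-function strategy used for Theorem~\ref{thm:moments-on-RP}, with the role of the heavy outward tails now taken by the heavier inward tails prescribed by~\eqref{ass:tails-on-R-in}. I work with the family $f_\nu(x) := (1 \vee |x|)^\nu$ for an exponent $\nu \in (0,\beta)$, and combine the sharp asymptotics of $\Exp_x[f_\nu(\xi_1) - f_\nu(x)]$ derived in Section~\ref{sec:lyapunov} with the passage-time moment criteria assembled in Section~\ref{sec:criteria}. The three parts of the theorem correspond to three asymptotic regimes of this increment: tail-dominated in~(i), critically balanced in~(ii), and drift-dominated in~(iii).

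The key input, uniform for $\nu$ in compact subsets of $(0,\beta)$, is an expansion of the form
\[
\Exp_x[f_\nu(\xi_1) - f_\nu(x)] = c K(\nu) x^{\nu - \beta} + \nu \mu(x) x^{\nu - 1} + o(x^{\nu - \beta}) \quad \text{as } x \to +\infty,
\]
where $K(\nu) := \Gamma(\nu) \bigl[ (1 - \beta + \nu) \Gamma(1-\beta)/\Gamma(2 - \beta + \nu) - \Gamma(\beta - \nu)/\Gamma(\beta) \bigr]$ is the coefficient coming from integrating $|x + \theta|^\nu - x^\nu$ against the heavy inward tail (with a symmetric formula for $x \to -\infty$). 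Using $\Gamma(2-\beta) = (1-\beta)\Gamma(1-\beta)$ and the digamma reflection formula, one verifies that $K(0) = 0$, that $K$ is monotonic on $(0,\beta)$, and that the linear term in the Taylor expansion of $K$ at $\nu = 0$ equals $-\pi \cot(\pi \beta)$, so the critical $\nu^\star$ of~\eqref{eqn:nu-star2} is positive precisely when $b + c\pi\cot(\pi\beta) < 0$. In the drift-free regime relevant to~(i), where $\mu(x) = o(x^{1-\beta})$, the sign of $K(\nu)$ alone determines whether $f_\nu$ is a supermartingale, and a check of the Gamma ratio shows $K(\nu) < 0$ iff $\nu < 2\beta - 3$, which requires $\beta > 3/2$.

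Having identified the critical exponents, the proof proceeds as follows. For the finiteness statements, I choose $\nu$ just below the critical value so that $\Exp_x[\Delta f_\nu] \leq -\varepsilon |x|^{\nu - \beta}$ (in~(i) and~(ii)) or $-\varepsilon |x|^{\nu - \gamma - 1}$ (in~(iii), where the drift term dominates and one may take $\nu \uparrow \beta$). The passage-time criterion for heavy-tailed chains from Section~\ref{sec:criteria} then gives $\Exp_x[\tau_a^q] < \infty$ for $q < \nu / \beta$ (in~(i), (ii)) and $q < \nu / (\gamma + 1)$ (in~(iii)), and letting $\nu$ approach its critical value yields the thresholds announced. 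For the infinity statements, I take $\nu$ just above the critical value (in~(i), (ii)) or slightly shifted in~(iii), so that the same expansion gives $\Exp_x[\Delta f_\nu] \geq \varepsilon |x|^{\nu - \beta}$, and apply the complementary (submartingale-type) criterion from Section~\ref{sec:criteria} to rule out $\Exp_x[\tau_a^q] < \infty$ past the threshold.

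The main technical obstacle, deferred to the Appendix and Section~\ref{sec:lyapunov}, is the explicit evaluation of the Lyapunov increment $\Exp_x[f_\nu(\xi_1) - f_\nu(x)]$. The difficulty is that, because the heavy tail is inward, large jumps may carry $\xi_n$ across the origin, so $|x + \theta|^\nu - |x|^\nu$ must be integrated against the tail density in a regulated (principal-value) sense, with jumps split into moderate and large parts and each contribution tracked separately. The resulting integral must then be reduced to the $\Gamma$-function combination appearing in~\eqref{eqn:nu-star2}. Moreover, the error terms in the expansion must be uniform in $\nu$ on closed intervals within $(0, \beta)$ in order to justify the limits $\nu \uparrow \nu^\star$ and $\nu \downarrow \nu^\star$ cleanly; this becomes delicate as $\nu \to \beta$, where the tail integrals are on the verge of divergence.
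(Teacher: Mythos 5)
Your overall architecture matches the paper's: the increment asymptotics for $f_\nu(x)=(1\vee|x|)^\nu$ (Lemma~\ref{lem:lyapunov-R-in}, via the integration-by-parts formula of Lemma~\ref{lem:exp-tail} and the beta-integral identities in the Appendix --- no principal values are needed, the integrals converge absolutely for $\nu<\beta$), followed by Lemmas~\ref{lem:finite-moments} and~\ref{lem:infinite-moments}. However, there are two genuine gaps, beyond the sign/normalisation muddle in your $K$ (note $K$ does \emph{not} vanish at $0$: the correct coefficient is $c\nu\kappa_2(\nu)|x|^{\nu-\beta}$ with $\kappa_2(0)=\psi(1-\beta)-\psi(\beta)=\pi\cot(\pi\beta)$ as the \emph{constant} term). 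First, you assert that "one verifies that $K$ is monotonic on $(0,\beta)$" from the digamma reflection formula. The reflection formula only gives the value at $\nu=0$; it does not give monotonicity, and without monotonicity the solution of~\eqref{eqn:nu-star2} need not be unique, so the two one-sided arguments ($\nu$ just below and just above the critical value) would bracket two \emph{different} exponents. The paper has to work for this: it first shows the zero set of $b+c\kappa_2(\nu)$ is finite (analyticity plus the divergence $\kappa_2(\nu)\to+\infty$ as $\nu\to\beta$), and then proves $\kappa_2$ is non-decreasing on $(0,\beta)$ by a probabilistic comparison --- Jensen's inequality applied to $\Exp_x[(f_2^{\nu_1}(x+\theta))^{\nu_2/\nu_1}]$, which forces $\kappa_2(\nu_2)\geq\kappa_2(\nu_1)$. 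You need some such argument; a purely special-function "check" is not available here (contrast $\kappa_0$ in Theorem~\ref{thm:moments-on-RP}, where a direct digamma computation does work).

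Second, and more seriously, your plan for the non-existence of moments in part~(iii) fails. When $\gamma<\beta-1$ and $b<0$, the drift term $\nu b x^{\nu-\gamma-1}$ dominates the tail term for \emph{every} $\nu\in(0,\beta)$, so $f_\nu(\xi_n)$ is a strict supermartingale for all such $\nu$ and no "slight shift" produces the submartingale inequality~\eqref{eqn:Yp-submart}; taking $\nu<0$ does not help because then $Y_n=f_\nu(\xi_n)$ is bounded and the stopping time in Lemma~\ref{lem:infinite-moments} is no longer $\tau_a$. The paper obtains the lower bound by an entirely different mechanism (Lemma~\ref{lem:big-jump}, adapted so that the big jump is the heavy \emph{inward} one, carrying the walk across the origin to distance $\sim n^{(1+\gamma)/\beta}$ on the far side, after which a maximal inequality for $w_y(x)=(y-x)^{1+\gamma}\1{x<y}$ shows the return takes time $\gtrsim n$ with probability bounded below). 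This one-big-jump argument is also what delivers the sharper conclusion that the critical moment $q=\beta/(\gamma+1)$ itself is infinite in part~(iii), which no submartingale criterion of the Lemma~\ref{lem:infinite-moments} type can give (it only handles $q$ strictly above the threshold). Parts~(i) and~(ii) of your plan are otherwise sound and coincide with the paper's reduction of~(i) to the $b=0$ case of~(ii) via $\kappa_2(2\beta-3)=0$.
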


\begin{remark}
In the special case $\gamma =0$ and assuming that no jumps away from the origin occur, the conclusion of Theorem~\ref{thm:moments-on-R-in}(iii)
coincides with Theorem~6(i) of~\cite{mpew}.
\end{remark}

\subsection{Walks on the whole line: the balanced case}
\label{sec:R-results3}

For the last of our models on $\R$, the inwards and outwards tail exponents coincide.

\begin{description}
\item\namedlabel{ass:tails-on-R-bal}{$\Tbal$} 
There exist constants $\alpha \in (1,2)$, $c > 0$, and $x_0 \in\RP$ such that
\begin{align*}
\lim_{y \to \infty} \sup_{x \geq x_0} \big| y^\alpha \Pr_x[\theta_+ > y] - c \big| =
\lim_{y \to \infty} \sup_{x \geq x_0} \big| y^\alpha \Pr_x[\theta_- > y] - c \big| & = 0, \text{ and} \\
\lim_{y\to \infty} \sup_{x \leq -x_0} \big| y^\alpha \Pr_x[\theta_+ > y] - c \big| = 
\lim_{y\to \infty} \sup_{x \leq -x_0} \big| y^\alpha \Pr_x[\theta_- > y] - c \big| & =  0 . \end{align*}
\end{description}

We also assume~\eqref{ass:R-drift}, and,
where necessary, an analogue of~\eqref{ass:inwards-tail+}.

\begin{theorem}
\label{thm:recurrence-on-R-bal}
Suppose that the Markov chain $\Xi$ on $\R$~satisfies~\eqref{ass:non-confinement-on-R}, 
\eqref{ass:tails-on-R-bal}, and~\eqref{ass:R-drift}. 
Then the following recurrence/transience criteria hold.
\begin{enumerate}[(i)]
\item If $\gamma > \alpha-1$ then $\Xi$ is null recurrent.
\item If $\gamma < \alpha-1$ and $b < 0$  then $\Xi$ is positive recurrent.
\item If $\gamma < \alpha-1$ and $b > 0$ then $\Xi$ is directional transient.
\item If $\gamma = \alpha-1$ and $b+c\pi\cot(\frac{\pi\alpha}{2})<0$ then $\Xi$ is null recurrent.
\end{enumerate}
If in addition,
the limit assumptions in~\eqref{ass:tails-on-R-bal}
are strengthened to $O(y^{-\delta})$ for some
 $\delta >0$, 
then the following
transience condition also holds.
\begin{enumerate}[(i)]
\setcounter{enumi}{4}
\item  If $\gamma = \alpha-1$ and $b+c\pi\cot(\frac{\pi\alpha}{2})>0$ then $\Xi$ is oscillatory transient.
\end{enumerate}
\end{theorem}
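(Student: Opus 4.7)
The plan is to prove Theorem~\ref{thm:recurrence-on-R-bal} by applying the general recurrence, transience, and oscillation criteria of Section~\ref{sec:criteria} to Lyapunov functions of the form $f(x) = |x|^\nu$, with drift estimates coming from Section~\ref{sec:lyapunov} and the Appendix. The argument largely mirrors that of Theorem~\ref{thm:recurrence-on-R-in}; the genuinely new ingredient is that the symmetry in~\eqref{ass:tails-on-R-bal} turns the Lyapunov drift integral into a principal-value integral against $|u|^{-\alpha-1}\,du$, and a standard computation (carried out in the Appendix) shows that the leading coefficient in $\nu$ equals $\pi\cot(\pi\alpha/2)$. This is the source of the constant appearing in~(iv) and~(v); note that $\pi\alpha/2 \in (\pi/2,\pi)$, so $\cot(\pi\alpha/2) < 0$ throughout the range of interest, consistently with the fact that the unperturbed symmetric stable behaviour is itself recurrent.

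Parts~(ii) and~(iii) are handled using $f(x) = |x|$. Since $\gamma < \alpha - 1$, a direct computation gives
\begin{equation*}
\Exp_x[|\xi_{n+1}| - |\xi_n|] = \sign(x)\mu(x) + O(|x|^{1-\alpha}) = (b + o(1))|x|^{-\gamma}.
\end{equation*}
Foster's criterion yields positive recurrence when $b<0$, while for $b>0$ a standard argument combining the positive drift with the $O(|x|^{-\alpha})$ tail probability of a sign-flipping jump shows that $\sign(\xi_n)$ eventually stabilises, giving directional transience.

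For parts~(i), (iv), and~(v), I take $f(x) = |x|^\nu$ for appropriate $\nu \in (0, \alpha)$. The key expansion (from Section~\ref{sec:lyapunov} and the Appendix) takes the form
\begin{equation*}
\Exp_x[f(\xi_{n+1}) - f(\xi_n)] = \nu b\, |x|^{\nu-1-\gamma} + \nu c\, \Lambda(\nu,\alpha)\,|x|^{\nu - \alpha} + o\bigl(|x|^{\nu - \alpha} + |x|^{\nu-1-\gamma}\bigr),
\end{equation*}
where $\Lambda(\nu,\alpha) \to \pi\cot(\pi\alpha/2)$ as $\nu \downarrow 0$. In the critical regime $\gamma = \alpha-1$ the two leading terms have the same order, and for small $\nu > 0$ the sign of the leading coefficient matches that of $b + c\pi\cot(\pi\alpha/2)$, yielding recurrence in~(iv) and transience of $|\xi_n|$ in~(v). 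In case~(i), where $\gamma > \alpha-1$, the tail term dominates and is negative, so $f$ is a supermartingale outside a compact set, giving recurrence; null recurrence then follows either from a passage-time moment computation analogous to Theorem~\ref{thm:moments-on-RP}(i), or equivalently by observing that the chain cannot be positive recurrent under~\eqref{ass:tails-on-R-bal} when $\mu$ decays faster than $|x|^{1-\alpha}$.

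The main obstacle is case~(v), \emph{oscillatory} transience. This uses the oscillation criterion from Section~\ref{sec:criteria}, which requires verifying two things: (a) $|\xi_n| \to \infty$ almost surely, and (b) $\sign(\xi_n)$ changes infinitely often. For (a), I apply the same expansion in reverse to $g(x) = |x|^{-\nu}$ with $\nu > 0$ small, obtaining a non-negative supermartingale outside a compact set. For (b), I exploit the perfect balance of the tails in~\eqref{ass:tails-on-R-bal}, strengthened by the $O(y^{-\delta})$ rate, to show that the probability of a large jump crossing the origin cannot be asymptotically dominated by the drift-driven escapes on whichever side the chain currently sits; this step is modelled on the corresponding argument in the proof of Theorem~\ref{thm:recurrence-on-R-in}(vi), with the symmetry of~\eqref{ass:tails-on-R-bal} simplifying the comparison between the two sides.
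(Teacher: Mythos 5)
Your overall framework---Lyapunov functions built from $|x|^\nu$, the semimartingale criteria of Section~\ref{sec:criteria}, and the identity $\cosec(\pi\alpha)+\cot(\pi\alpha)=\cot(\tfrac{\pi\alpha}{2})$ producing the constant in parts~(iv) and~(v)---is the same as the paper's, which routes everything through Lemma~\ref{lem:lyapunov-R-bal} and the constants $\kappa_0,\kappa_1,\kappa_2$. However, there are two concrete gaps.

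First, in part~(ii) you cannot obtain \emph{positive} recurrence from $f(x)=|x|$: the drift you compute is $(b+o(1))|x|^{-\gamma}$, which tends to $0$ whenever $\gamma>0$, so Foster's criterion (which needs the drift bounded away from zero outside a bounded set) does not apply; you would only get recurrence. The fix is to use $f_2$ with exponent $\nu=1+\gamma<\alpha$, for which $D_2(x)=\nu b+o(1)$ is uniformly negative when $b<0$, exactly as in the proofs of Theorems~\ref{thm:recurrence-on-R-out} and~\ref{thm:recurrence-on-R-in}.

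Second, the parts that distinguish directional from oscillatory transience---(iii) and especially (v)---are precisely where your proposal substitutes heuristics for proofs. For~(iii), ``$\sign(\xi_n)$ eventually stabilises'' by comparing the drift with the tail probability of a sign-flipping jump is not an argument that closes (the walk may revisit a neighbourhood of the origin before any single large jump matters); the clean mechanism is the \emph{one-sided} function $f_1^\nu$ with $\nu<0$, which satisfies the hypotheses of Lemma~\ref{lem:directional-transience} on both half-lines. For~(v), the decisive structural fact---which your ``large jump crossing the origin'' sketch does not isolate---is that in the expansion of $D_1$ for the balanced case (Lemma~\ref{lem:lyapunov-R-bal}(i)) the inward-tail contribution $c\,\kappa_1(\alpha,\nu)\,x^{\nu-\alpha}$ is \emph{not} multiplied by $\nu$, and $\kappa_1(\alpha,0)=-1$. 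Hence for small $\nu>0$, in the critical regime $D_1(x)=x^{\nu-\alpha}\bigl(\nu(b+c\kappa_0(\alpha,0))+c\kappa_1(\alpha,0)+o(1)\bigr)$ has leading coefficient close to $-c<0$ \emph{regardless} of the sign of $b+c\pi\cot(\tfrac{\pi\alpha}{2})$; so $f_1(\xi_n)$ is a supermartingale far to the right and $f_1(-\xi_n)$ far to the left, and Lemma~\ref{lem:oscillatory-transience} rules out both $\xi_n\to+\infty$ and $\xi_n\to-\infty$. Combined with $|\xi_n|\to\infty$ (your step~(a), which is correct, and is indeed where the $O(y^{-\delta})$ strengthening is needed so that the $f_2$ expansion is valid for $\nu<0$), this yields oscillation. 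Without identifying this order-$\nu^0$ term, your argument for (v)(b) cannot be completed.
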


\begin{theorem}
\label{thm:moments-on-R-bal}
Suppose that the Markov chain $\Xi$ on $\R$~satisfies~\eqref{ass:tails-on-R-bal} and~\eqref{ass:R-drift}.  Let $q >0$.
 Then for all $a$ sufficiently large and all $x>a$, the following hold.
\begin{enumerate}[(i)]
\item If $\gamma > \alpha - 1$ then $\Exp_x[\tau_a^q] < \infty$ for $q<1 - \frac{1}{\alpha}$, and $\Exp_x[\tau_a^q] = \infty$ for $q >1 - \frac{1}{\alpha}$.
\item If $\gamma = \alpha -1$ and $b + c \pi \cot(\frac{\pi\alpha}{2}) < 0$, then
$\Exp_x[\tau_a^q] < \infty$
$q< \nu^\star / \alpha$, and $\Exp[\tau_a^q] = \infty$ for $q>\nu^\star / \alpha$,
  where	$\nu^\star := \nu^\star (\alpha,b,c)$ is the unique solution in $(0,\alpha)$
  to
\begin{align*}
\frac{b}{c} & = (\nu^\star-1)\frac{\Gamma(1-\alpha)\Gamma(\alpha-\nu^\star)}{\Gamma(2-\nu^\star)} + \Gamma (\nu^\star) \left( \frac{(1-\alpha+\nu^\star) \Gamma(1-\alpha) }{\Gamma(2-\alpha+\nu^\star)} - \frac{\Gamma (\alpha-\nu^\star)}{\Gamma(\alpha)} \right).%\\
%\frac{b}{c} & = \frac{ \sin(\pi (\alpha-\nu^\star)) - \sin(\pi \alpha) -\sin(\pi \nu^\star) }{
%\sin(\pi \alpha) } B(\nu^\star , \alpha - \nu^\star).
\end{align*}
\item If $\gamma < \alpha-1$ and $b<0$ then $\Exp_x[\tau_a^q] < \infty$ for $q<
  \frac{\alpha}{\gamma+1}$, and $\Exp_x[\tau_a^q] = \infty$ for $q \geq
    \frac{\alpha}{\gamma+1}$.
\end{enumerate}
\end{theorem}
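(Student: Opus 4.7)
The proof follows the same template as Theorems~\ref{thm:moments-on-RP}, \ref{thm:moments-on-R-out}, and~\ref{thm:moments-on-R-in}: apply the Lyapunov-function drift estimates from Section~\ref{sec:lyapunov} to the family of test functions $f_\nu(x) := |x|^\nu$ for $\nu \in (0,\alpha)$ (with a bounded modification near the origin if necessary), and feed them into the passage-time-moment criteria from Section~\ref{sec:criteria}. The distinctive feature of the balanced regime~\eqref{ass:tails-on-R-bal} is that the leading-order drift of $f_\nu$ is the sum of the outward and inward tail contributions already isolated for the half-line problem (Theorem~\ref{thm:moments-on-RP}) and for the inward-heavy-tails problem on $\R$ (Theorem~\ref{thm:moments-on-R-in}), with no new cancellations between them.

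The plan is first to establish that, uniformly for $\nu$ in compact subsets of $(0,\alpha)$, as $|x| \to \infty$,
\[
 \Exp_x \bigl[ f_\nu(\xi_{n+1}) - f_\nu(\xi_n) \bigr] = \bigl(\nu b + o(1)\bigr) |x|^{\nu - 1 - \gamma} + c\bigl(\Phi_\alpha(\nu) + o(1)\bigr) |x|^{\nu-\alpha},
\]
where $\Phi_\alpha(\nu)$ is the function appearing as the right-hand side of the equation for $\nu^\star$ in part~(ii). Its first summand is the outward-tail contribution, derived exactly as in the proof of Theorem~\ref{thm:moments-on-RP}(ii); its second is the inward-tail contribution, derived exactly as in the proof of Theorem~\ref{thm:moments-on-R-in}(ii). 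The relevant integral identities are already collected in the Appendix and combine additively under~\eqref{ass:tails-on-R-bal}, so no genuinely new integral computation is required. With this expansion in hand, part~(iii) ($\gamma < \alpha-1$, $b<0$) is the standard Foster--Lyapunov case: the drift term dominates, its coefficient is negative, and the argument follows that of Theorems~\ref{thm:moments-on-RP}(iii) and~\ref{thm:moments-on-R-in}(iii) verbatim, giving critical exponent $\alpha/(\gamma+1)$.

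In the critical case~(ii), $\gamma = \alpha-1$ and both summands are of order $|x|^{\nu-\alpha}$. The effective coefficient is proportional to $b/c - \Phi_\alpha(\nu)$, and setting it to zero recovers the equation for $\nu^\star$ in the statement; the critical moment exponent $\nu^\star/\alpha$ then follows by the same argument as in Theorem~\ref{thm:moments-on-RP}(ii). For part~(i), with $\gamma > \alpha-1$, the drift term becomes negligible compared to the tail contribution, and the largest admissible $\nu$ is the one at which $\Phi_\alpha$ changes sign. A routine manipulation using $\Gamma(z+1) = z\Gamma(z)$ shows that $\Phi_\alpha(\alpha-1) = 0$ (consistently with formally substituting $b=0$ and $\nu^\star = \alpha-1$ into the equation in~(ii), where the $(\nu-1)\Gamma(\alpha-\nu)/\Gamma(2-\nu)$ piece and the $\Gamma(\nu)\Gamma(\alpha-\nu)/\Gamma(\alpha)$ piece cancel exactly). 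Hence this boundary value is $\nu^\star = \alpha-1$, yielding the critical moment exponent $(\alpha-1)/\alpha = 1 - 1/\alpha$.

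The main technical obstacle is to verify strict monotonicity of $\Phi_\alpha$ on $(0,\alpha)$, which is what ensures that $\nu^\star$ is uniquely determined in parts~(i) and~(ii) and that the sign of the drift of $f_\nu$ is unambiguously controlled by the position of $\nu$ relative to $\nu^\star$. Since $\Phi_\alpha$ is the sum of the two half-line functions, each of which was already shown to be monotone in the proofs of Theorems~\ref{thm:moments-on-RP}(ii) and~\ref{thm:moments-on-R-in}(ii), it suffices to check that their monotonicities have consistent sign; this is done by differentiating the Appendix integral representations in $\nu$ and inheriting the sign analysis from those earlier proofs.
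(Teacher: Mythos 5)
Your proposal is correct and follows essentially the same route as the paper, whose own proof of this theorem is omitted with a pointer to the preceding arguments: Lemma~\ref{lem:lyapunov-R-bal} gives the drift expansion with coefficient $b+c(\kappa_0(\alpha,\nu)+\kappa_2(\alpha,\nu))$, the equation for $\nu^\star$ is exactly $b+c(\kappa_0+\kappa_2)(\nu^\star)=0$, your verification that this vanishes at $\nu=\alpha-1$ when $b=0$ is right, and parts~(ii) and~(iii) then run as in Theorems~\ref{thm:moments-on-RP} and~\ref{thm:moments-on-R-in} (including Lemma~\ref{lem:big-jump} for the non-existence half of~(iii)). One small correction of attribution: the monotonicity of $\kappa_2$ is established in the proof of Theorem~\ref{thm:moments-on-R-in}(ii) not by differentiating the integral representations but by the Jensen's-inequality argument applied to $D_2^\nu$ (which in the balanced case applies verbatim to the combined coefficient $\kappa_0+\kappa_2$, so you need not even split it into summands); citing that established fact, together with the strict monotonicity of $\kappa_0$, gives the uniqueness of $\nu^\star$ you need.
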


\begin{remarks}
(i) In the special case where the distribution of $\theta$ is symmetric (so the drift is zero), the conclusion of Theorem~\ref{thm:moments-on-R-bal}(i)
coincides with Theorem~4(ii) of~\cite{mpew}.

(ii) 
Sandri\'c \cite{sandricB,sandricT} considers symmetric or near-symmetric increment distributions,
but allows the exponent $\alpha (x)$ to vary with position $x  \in \R$.
In particular, if $\alpha (x) \in (1,2)$ is bounded away from $1$ and $2$, then~\cite{sandricB,sandricT} give
 sufficient conditions for recurrence, transience, and positive recurrence
that generalize, under some additional conditions, the relevant parts of our 
Theorem~\ref{thm:recurrence-on-R-bal} (see the discussion in~\cite[pp.~465--466]{sandricT}).
Similar results for an analogous continuous-time model (Feller process) are obtained in~\cite{sandricS}.
\end{remarks}

\subsection{Walks in higher dimensions: an example}
\label{sec:R2-example}

We present a family of martingales on $\R^2$ with heavy-tailed jumps
that contains both recurrent and transient walks. The example is
deliberately simplistic for expository purposes; the phenomenon could certainly be reproduced for
more naturally motivated random walks.  Similarly, although the walks we describe here are 
2-dimensional, this example can easily be extended to any dimension $d \geq 2$.

Let $\0$ denote the origin of $\R^2$. For $\bx \in \R^2 \setminus \{ \0 \}$
write $\bu_\bx := \bx / \|\bx \|$, and write
$\bv_\bx$ for the unit vector perpendicular to $\bu_\bx$
obtained by rotating $\bu_\bx$ anticlockwise by $\pi/2$.

Our Markov chain $\Xi$ on $\X \subseteq \R^2$
is constructed so that given $\xi_0 = \bx \neq \0$ the jump
$\theta$ is given by
$\theta = \chi \bu_\bx \theta^\cR + (1-\chi) \bv_\bx \theta^\cT$,
where $\chi \in \{0,1\}$,
and $\theta^\cR, \theta^\cT \in \R$ are independent of $\chi$.
If $\chi = 1$ we say that $\theta$ is a \emph{radial} jump, and if $\chi =0$
we say that $\theta$ is a \emph{transverse} jump. 
We also assume that $\Pr_\0 [ \theta = \0 ] <1$.
We suppose that
positive
radial jumps are heavy tailed, that negative radial jumps 
have bounded moments, and that the transverse jumps are symmetric and also have heavy tails.
Specifically, we assume the following.
\begin{enumerate}[(A)]
\item\label{ass:R2-supp} For all $\bx \in \R^2 \setminus \{\0\}$,
 $\Pr_\bx [ \chi = 1 ] = p^\cR \in (0,1)$ and
$\Pr_\bx [ \chi = 0 ] = p^\cT =  1 - p^\cR$.
\item\label{ass:R2-mart}
For all $\bx \in \R^2 \setminus \{ \0\}$, $\Exp_\bx [ \theta^\cR ] = \Exp_\bx [ \theta^\cT ] = 0$.
\item\label{ass:R2-rad-out} There exist constants $\alpha \in (1,2)$ and $c^\cR \in (0,\infty)$ such that
\begin{align*}
\lim_{y \to \infty} \sup_{\bx \in \R^2 \setminus \{ \0\}} \left| y^\alpha \Pr_\bx [ \theta_+^\cR  > y ] - c^\cR \right| = 0 .
\end{align*}
\item\label{ass:R2-rad-in} There exists a constant $\beta > \alpha$ such that
\[
\sup_{\bx \in \R^2 \setminus \{ \0 \}} \Exp_\bx [ (\theta^\cR_- )^\beta ]  < \infty.
\]
\item\label{ass:R2-trans} 
For all $\bx \in \R^2 \setminus \{ \0\}$ and all $y \in \RP$, $\Pr_\bx [ \theta^\cT_- > y] = \Pr_\bx [ \theta^\cT_+ > y]$, and
there exists a constant $c^\cT \in (0,\infty)$ such that
\begin{align*}
\lim_{y \to \infty} \sup_{\bx \in \R^2 \setminus \{ \0\}} \left| y^\alpha \Pr_\bx [ \theta_+^\cT  > y ] - c^\cT \right| = 0 .
\end{align*}
\end{enumerate}

Now  $\Xi$ is transient if $\lim_{n \to \infty} \| \xi_n \| = \infty$, a.s.,
and recurrent if $\liminf_{n \to \infty} \| \xi_n \| \leq r_0$, a.s.,
for some constant $r_0 \in \RP$. Set $\tau_a := \min \{ n \in \ZP  : \| \xi_n \| \leq a \}$.

\begin{theorem}\label{thm:R2-example}
Suppose that the Markov chain $\Xi$ on $\R^2$~satisfies~\eqref{ass:R2-supp}--\eqref{ass:R2-trans}.
 Then the following classification holds.
\begin{enumerate}[(i)]
\item If $p^\cR c^\cR + 2 p^\cT c^\cT \cos(\frac{\pi \alpha}{2}) > 0$, then
  $\Xi$ is null recurrent. 
\item If $p^\cR c^\cR + 2 p^\cT c^\cT \cos(\frac{\pi \alpha}{2}) < 0$, then
  $\Xi$ is transient.
\end{enumerate}
Moreover, in case~(i), for all $a$ sufficiently large we have $\Exp_\bx [ \tau_a^q ] < \infty$ for $q < \nu^\star/\alpha$ and $\Exp_\bx [ \tau_a^q ]
= \infty$ for $q>\nu^\star/\alpha$ and $\|\bx\| > a$, where $\nu^\star$ is the
unique solution in $(0,1)$ to
\begin{equation}\label{eqn:R2-nu-star}
p^\cR c^\cR \frac{\Gamma(\alpha-\nu^\star)\Gamma(1-\alpha)}{\Gamma(1-\nu^\star)} + p^\cT c^\cT
\frac{\Gamma(\frac{\alpha-\nu^\star}{2})\Gamma(1-\frac{\alpha}{2})}{\Gamma(1-\frac{\nu^\star}{2})}
= 0.
\end{equation}
\end{theorem}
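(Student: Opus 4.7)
My approach is to use the one-parameter Lyapunov family $f_\nu(\bx) := \|\bx\|^\nu$ and compute its drift asymptotically. By the decomposition in~\eqref{ass:R2-supp}, writing $x = \|\bx\|$,
\begin{equation*}
\Exp_\bx[\|\xi_1\|^\nu - x^\nu] = p^\cR \Exp_\bx[|x + \theta^\cR|^\nu - x^\nu] + p^\cT \Exp_\bx[(x^2 + (\theta^\cT)^2)^{\nu/2} - x^\nu] .
\end{equation*}
The result will then follow by feeding the resulting expansion into the semimartingale and passage-time criteria assembled in Section~\ref{sec:criteria}.

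The radial summand is exactly of the type analysed in Section~\ref{sec:lyapunov} for chains on $\RP$: under~\eqref{ass:R2-mart}, \eqref{ass:R2-rad-out} and~\eqref{ass:R2-rad-in}, for each $\nu \in (0,\alpha)$,
\begin{equation*}
\Exp_\bx[|x + \theta^\cR|^\nu - x^\nu] \sim \nu c^\cR \frac{\Gamma(1-\alpha)\Gamma(\alpha-\nu)}{\Gamma(1-\nu)} x^{\nu-\alpha} ,
\end{equation*}
the asymptotic that already drives~\eqref{eqn:nu-star} in Theorem~\ref{thm:moments-on-RP}. The transverse summand is the genuinely two-dimensional ingredient. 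Using symmetry of $\theta^\cT$ to write
\begin{equation*}
\Exp_\bx[(x^2 + (\theta^\cT)^2)^{\nu/2} - x^\nu] = \int_0^\infty \Pr_\bx\bigl[ |\theta^\cT| > ((x^\nu + t)^{2/\nu} - x^2)^{1/2} \bigr] \, dt ,
\end{equation*}
applying~\eqref{ass:R2-trans}, and substituting first $t = x^\nu u$ and then $r = (1+u)^{2/\nu} - 1$, the remaining integral reduces to $(\nu/2) B(1-\alpha/2, (\alpha-\nu)/2)$, yielding
\begin{equation*}
\Exp_\bx[(x^2 + (\theta^\cT)^2)^{\nu/2} - x^\nu] \sim \nu c^\cT \frac{\Gamma(1-\alpha/2)\Gamma((\alpha-\nu)/2)}{\Gamma(1-\nu/2)} x^{\nu-\alpha} .
\end{equation*}
Adding the two contributions, the total Lyapunov drift is $\nu G(\nu) x^{\nu-\alpha}(1 + o(1))$, where $G(\nu)$ is precisely the left-hand side of~\eqref{eqn:R2-nu-star} as a function of $\nu$.

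The recurrence/transience dichotomy is then read off from the sign of $G(0)$. By the reflection formula $\Gamma(z)\Gamma(1-z) = \pi/\sin(\pi z)$ together with $\sin(\pi\alpha) = 2\sin(\pi\alpha/2)\cos(\pi\alpha/2)$,
\begin{equation*}
G(0) = \frac{\pi \bigl(p^\cR c^\cR + 2 p^\cT c^\cT \cos(\pi\alpha/2)\bigr)}{\sin(\pi\alpha)} ,
\end{equation*}
and since $\sin(\pi\alpha) < 0$ for $\alpha \in (1,2)$, the sign of $G(0)$ is opposite to that of the bracketed quantity. In case~(i), $G(0) < 0$; taking $\nu > 0$ sufficiently small makes the drift of $\|\xi_n\|^\nu$ negative, and the semimartingale recurrence criterion applies. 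A root $\nu^\star \in (0,1)$ of $G$ exists because $G(0) < 0$ and $G(1) = p^\cT c^\cT \Gamma(1-\alpha/2) \Gamma((\alpha-1)/2)/\Gamma(1/2) > 0$ (the radial term vanishes at $\nu = 1$ because of the simple pole of $\Gamma$ at $0$); uniqueness is verified by term-by-term differentiation of $G$ on $(0,1)$. The passage-time moment estimates of Section~\ref{sec:criteria}, applied with $f_{\nu^\star \pm \eps}$ for small $\eps > 0$, then deliver the threshold $\nu^\star/\alpha$, exactly as in the proof of Theorem~\ref{thm:moments-on-RP}. In case~(ii), $G(0) > 0$; taking $\nu < 0$ small makes $\nu G(\nu)$ negative, so $f_\nu(\xi_n)$ is a bounded nonnegative supermartingale outside a large ball, and since $f_\nu \to 0$ at infinity the standard supermartingale argument for transience applies.

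The principal technical obstacle is the \emph{uniformity in $\bx$} of the two asymptotic expansions. Rotational symmetry of the jump law makes the two expectations depend on $\bx$ only through $x = \|\bx\|$, so uniformity in direction is automatic; however, uniformity in $x$ of the transverse estimate requires splitting the integration range for $\theta^\cT$ into three windows --- $|\theta^\cT| \leq x^{1-\eps}$, where $(x^2+y^2)^{\nu/2} - x^\nu \approx (\nu/2) y^2 x^{\nu-2}$; $|\theta^\cT| \geq x^{1+\eps}$, where $(x^2+y^2)^{\nu/2} - x^\nu \approx |y|^\nu$; and the intermediate annulus --- and showing that the remainders from each window are $o(x^{\nu-\alpha})$ uniformly in $x$. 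This is delicate but standard, relying only on the uniformity already encoded in~\eqref{ass:R2-rad-out} and~\eqref{ass:R2-trans}.
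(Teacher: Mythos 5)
Your proposal follows essentially the same route as the paper: the same Lyapunov function $\|\bx\|^\nu$, the same radial/transverse decomposition, the same reduction of the radial term to the one-dimensional estimate of Lemma~\ref{lem:lyapunov-f0}, and the same identification of the threshold via $G(0)$ and the reflection formula. The only variation is in the computation of the transverse contribution: the paper passes to the monotone function $g_\bx(y) = (\|\bx\|^2+y)^{\nu/2}-\|\bx\|^\nu$ and applies the Stieltjes integration-by-parts formula (Lemma~\ref{lem:exp-tail}), whereas you use the layer-cake representation $\Exp[Y] = \int_0^\infty \Pr[Y>t]\,\ud t$ followed by two substitutions; both lead to the same beta integral $B\bigl(\frac{\alpha-\nu}{2},1-\frac{\alpha}{2}\bigr)$ and hence the same $\kappa_0(\alpha/2,\nu/2)$ factor. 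One small point worth noting: your layer-cake identity as written requires $Y = (x^2+(\theta^\cT)^2)^{\nu/2}-x^\nu \geq 0$, i.e.\ $\nu > 0$, yet you invoke the resulting asymptotic for $\nu < 0$ in case~(ii); for negative $\nu$ you should write $\Exp[Y] = -\int_0^\infty \Pr[-Y>t]\,\ud t$ (the paper's integration-by-parts route handles both signs uniformly). This is easily repaired and does not affect the substance of the argument.
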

\begin{remarks}
(i) The theorem shows how the balance between radial and transverse increments
determines the recurrence behaviour of zero-drift random walks; for walks whose increments have two moments,
the analogous phenomenon is driven by the increment covariance matrix, as described for example in~\cite{gmmw}
or~\cite[\S 4.2]{mpow}.

(ii) Since $\cos(\frac{\pi\alpha}{2}) < 0$ for $\alpha \in (1,2)$, there are walks exhibiting
either behaviour for any $\alpha \in (1,2)$.  Indeed, both the ratio $c^\cR/c^\cT$
and the ratio
$p^\cR/p^\cT$ 
can take
any value in $(0,\infty)$, so for fixed $\alpha \in (1,2)$ the walk is recurrent if either
ratio is taken large enough, and transient if either ratio is taken small enough. 
\end{remarks}

\subsection{Walks in higher dimensions: an open problem}
\label{sec:R2-problem}

We finish this section with an open problem concerning
a partially homogeneous random walk $\Xi$ on $\X = \ZP^2$.
Partition $\ZP^2$ into sets $I := \N^2$,
$A_1 := \N \times \{ 0 \}$,
$A_2 := \{ 0 \} \times \N$, and $O := \{ \0 \}$.
Suppose that $\Pr_\bx [ \theta = \, \cdot \, ]$
is determined only by which of $I, A_1, A_2, O$
contains $\bx$. Write $\theta = (\theta_1, \theta_2)$
in coordinates. Suppose that
$\Pr_\bx [ \theta_j \geq -1 ] =1$ for all $\bx \in I$ and $j \in \{1,2\}$.
Suppose also that $\sup_\bx \Exp_\bx [ \| \theta \| ] < \infty$. 
Denote by
$M_j  = \Exp [ \theta \mid \xi_0 \in A_j ]$
the mean drift vectors on the axes,
and by $M_0 = \Exp [ \theta \mid \xi_0 \in I]$ the drift in the interior.

If $M_0 \neq \0$ the problem  can be essentially classified in terms of  $M_0, M_1$ and $M_2$:
see pp.~39--41 of~\cite{fmm}. The most subtle case has $M_0 = \0$.
If $\sup_\bx \Exp_\bx [ \| \theta \|^p ] < \infty$ for some $p>2$, then 
also important
is the \emph{covariance} of $\theta$ in region $I$: see pp.~56--58 of~\cite{fmm}, or~\cite{aim}.

What about when $M_0 =\0$ but the second moment is infinite? Concretely, 
suppose that for $c_1, c_2 >0$ and $\alpha_1, \alpha_2 \in (1,2)$,
for $j \in \{1,2\}$ and $y \in \N$,
\[ \Pr [ \theta_j = y \mid \xi_0 \in I ] = (c_j + o(1)) y^{-1-\alpha_j} , \text{ as } y \to \infty .\]
Thus the jumps of $\Xi$ are heavy-tailed away from the axes, but bounded towards the axes.

\begin{problem}
\label{problem}
Classify the recurrence and transience of $\Xi$.
\end{problem}

The answer to Problem~\ref{problem}
may well depend on $M_1, M_2$, the $c_j$ and $\alpha_j$, and also on some analogue of
the increment covariance matrix in the heavy-tailed setting.

\section{Semimartingale criteria for real-valued processes}
\label{sec:criteria}

Central in proving our main results 
is the Lyapunov function methodology: we find a function $f$ so that $f(\xi_n)$
satisfies a suitable semimartingale condition, which implies 
the desired properties of $\Xi$. In this section we collect the semimartingale
criteria that we need; mostly this involves presenting known results,
but   a little work is needed to get statements in the form that we want for
establishing directional or oscillatory transience.

All of these results are stated for a real-valued stochastic process $(X_n, n \in \ZP)$
adapted to a filtration $(\cF_n, n \in \ZP)$. This generality, without assuming that $X_n$ is Markov,
is useful so that we can apply the results to e.g.~$X_n = \| \xi_n \|$ for $\xi_n \in \R^2$ the model
in Section~\ref{sec:R2-example}.
First  we present recurrence and transience  criteria
for processes on $\RP$; these results are Theorems~3.5.8 and~3.5.6(ii) in~\cite{mpow}.

\begin{lemma}[Recurrence criterion on $\RP$]
\label{lem:rec-RP} 
Suppose that
  $(X_n)$ is an $(\cF_n)$-adapted process taking values in $\RP$ and
  $\limsup_{n\to\infty} X_n = \infty$, a.s.  Let $f:\RP \to\RP$ be such that
  $f(x) \to \infty$ as $x \to \infty$ and $\Exp f(X_0) < \infty$.  Suppose that there
  exist $x_1 \in \RP$ and $C < \infty$ for which, for all $n \geq 0$,
\begin{align*}
\Exp[ f(X_{n+1})-f(X_n) \mid \cF_n] &\leq 0, \text{ on } \{X_n > x_1 \};\\
\Exp[ f(X_{n+1})-f(X_n) \mid \cF_n] &\leq C, \text{ on } \{X_n \leq x_1 \}.
\end{align*}
Then $\liminf_{n\to\infty} X_n \leq x_1$, a.s.
\end{lemma}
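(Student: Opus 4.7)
The strategy is a textbook Foster--Lyapunov argument: I would stop the Lyapunov function at the first entry to $[0, x_1]$, use the drift hypothesis to identify a nonnegative supermartingale, and combine martingale convergence with the assumption $\limsup_n X_n = \infty$ to force the stopping time to be finite. An iteration over starting times then upgrades the single-hit statement to the required $\liminf$ statement.

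Concretely, set $\tau := \inf\{ n \geq 0 : X_n \leq x_1\}$ and let $Y_n := f(X_{n \wedge \tau})$, which is $(\cF_n)$-adapted and nonnegative. On $\{ n < \tau \}$ we have $X_n > x_1$, so the first drift hypothesis gives $\Exp[Y_{n+1} - Y_n \mid \cF_n] \leq 0$; on $\{n \geq \tau\}$ the process is frozen. Integrability at each step is obtained inductively from the universal drift bound $\Exp[f(X_{n+1}) - f(X_n) \mid \cF_n] \leq C$ (valid everywhere, since $0 \leq C$ so both hypotheses imply it), which yields $\Exp[f(X_n)] \leq \Exp[f(X_0)] + nC < \infty$ and hence $\Exp[Y_n] < \infty$. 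Thus $(Y_n)$ is a nonnegative $(\cF_n)$-supermartingale, so by Doob's convergence theorem it converges a.s.\ to a finite limit.

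On the event $\{\tau = \infty\}$ the sequence $f(X_n) = Y_n$ must therefore converge to a finite limit. But $f(x) \to \infty$ as $x \to \infty$ together with $\limsup_n X_n = \infty$ a.s.\ forces $\limsup_n f(X_n) = \infty$ on that event, a contradiction; hence $\Pr[\tau = \infty] = 0$. Finally, to upgrade this first-hit conclusion to $\liminf_n X_n \leq x_1$, I would apply exactly the same argument to the shifted process $(X_{N+m})_{m \geq 0}$ for each fixed $N \in \ZP$: it is $(\cF_{N+m})_m$-adapted, inherits the drift conditions, and has $\Exp[f(X_N)] \leq \Exp[f(X_0)] + NC < \infty$ from the bound above. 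This delivers $\inf\{ n \geq N : X_n \leq x_1\} < \infty$ a.s.\ for every $N$, so $\{X_n \leq x_1\}$ occurs infinitely often and $\liminf_n X_n \leq x_1$ a.s. The only step needing genuine care---rather than a direct appeal to the strong Markov property, which is unavailable in this non-Markov setting---is precisely this iteration, and the integrated drift bound is exactly what makes it go through.
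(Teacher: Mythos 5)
Your proof is correct, and it is the standard supermartingale-convergence argument for this kind of Foster--Lyapunov recurrence criterion: the paper does not prove the lemma itself but imports it as Theorem~3.5.8 of~\cite{mpow}, whose proof proceeds in essentially the same way (stop $f(X_n)$ on first entry to $[0,x_1]$, invoke nonnegative supermartingale convergence against $\limsup_n X_n=\infty$, then iterate over shifted start times using the integrated bound $\Exp[f(X_N)]\leq \Exp[f(X_0)]+NC$). The only cosmetic point is that your ``universal bound $\leq C$'' presumes $C\geq 0$; if not, replace $C$ by $\max(C,0)$, which preserves both hypotheses.
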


\begin{lemma}[Transience criterion on $\RP$]
\label{lem:trans-RP}
Suppose that
  $(X_n)$ is an $(\cF_n)$-adapted process taking values in $\RP$ and
  $\limsup_{n\to\infty} X_n = \infty$, a.s.  Let $f:\RP \to\RP$ be such that
  $\sup_x f(x) < \infty$, $\lim_{x\to \infty} f(x) = 0$, and $\inf_{y \leq x} f(y) >0$ for
  any $x \in \RP$.  Suppose also that there exists $x_1 \in \RP$ for which, for all
  $n \geq 0$,
\[
\Exp[ f(X_{n+1})-f(X_n) \mid \cF_n] \leq 0, \text{ on } \{X_n > x_1 \}.
\]
Then $\lim_{n\to \infty} X_n = \infty$, a.s.
\end{lemma}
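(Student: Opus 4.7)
The plan is a Lyapunov--supermartingale argument combined with the non-confinement hypothesis: because $\limsup_{n\to\infty} X_n = \infty$ a.s., the process can be ``restarted'' arbitrarily far to the right, and from there an optional-stopping estimate prevents return to the set $\{X \leq x_1\}$. Set $\epsilon := \inf_{y \leq x_1} f(y)$, which is strictly positive by hypothesis. For each $k > x_1$, introduce the stopping times $T_k := \inf\{n \geq 0 : X_n > k\}$, which is a.s.\ finite by $\limsup X_n = \infty$, and $\tau_k := \inf\{n > T_k : X_n \leq x_1\}$, the first return to the bounded set after exiting $[0,k]$.

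The key step is to verify that the shifted, stopped process $M^{(k)}_n := f(X_{(T_k+n) \wedge \tau_k})$ is a bounded nonnegative supermartingale with respect to the shifted filtration $\cG_n := \cF_{T_k + n}$. Indeed, on $\{T_k + n < \tau_k\}$ one has $X_{T_k+n} > x_1$, so the hypothesis gives $\Exp[M^{(k)}_{n+1} - M^{(k)}_n \mid \cG_n] \leq 0$; on $\{T_k + n \geq \tau_k\}$ the process is constant. Combined with $\sup f < \infty$, martingale convergence yields $M^{(k)}_n \to M^{(k)}_\infty$ a.s.\ and in $L^1$, with $\Exp[M^{(k)}_\infty \mid \cF_{T_k}] \leq f(X_{T_k})$. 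Since $M^{(k)}_\infty \geq \epsilon \1{\tau_k < \infty}$, Markov's inequality yields
\[
\Pr[\tau_k < \infty \mid \cF_{T_k}] \leq \frac{f(X_{T_k})}{\epsilon} \leq \frac{\sup_{y > k} f(y)}{\epsilon},
\]
and the right-hand side tends to $0$ as $k \to \infty$ by the hypothesis $\lim_{x \to \infty} f(x) = 0$.

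To conclude, observe that on the event $\{\tau_k = \infty\}$ the whole tail $(f(X_n))_{n \geq T_k}$ is itself a bounded nonnegative supermartingale, and hence converges a.s.\ to some limit $L$. The non-confinement hypothesis supplies a subsequence along which $X_{n_j} \to \infty$, forcing $f(X_{n_j}) \to 0$ and so $L = 0$; the condition $\inf_{y \leq N} f(y) > 0$ for every $N$ then forces $X_n \to \infty$, for otherwise $f(X_n)$ would stay bounded away from $0$ along a bounded subsequence. Hence $\Pr[\lim_{n\to\infty} X_n = \infty] \geq \Pr[\tau_k = \infty] \to 1$ as $k \to \infty$, which gives the claim.

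The main technical obstacle is that $(X_n)$ is assumed only adapted, not Markov, so the strong Markov property is unavailable; the ``restart'' at the stopping time $T_k$ must instead be handled by applying optional stopping directly to $M^{(k)}_n$ against the shifted filtration $(\cG_n)$. Once the adapted supermartingale structure is checked, the remaining bounded-supermartingale convergence and limit identification steps are routine.
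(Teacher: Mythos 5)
Your proof is correct, and it follows the standard supermartingale-plus-non-confinement route that the paper implicitly relies on. Note that the paper does not prove Lemma~\ref{lem:trans-RP} directly but cites it as Theorem~3.5.6(ii) of~\cite{mpow}; however, your argument tracks closely the method the paper itself uses to prove the closely related Lemma~\ref{lem:R-hitting-prob} and Lemma~\ref{lem:directional-transience}: a one-step supermartingale estimate gives a hitting-probability bound from a high starting level, and then the non-confinement hypothesis $\limsup_n X_n = \infty$ is used to restart at a stopping time $T_k$ where $X_{T_k}>k$ and force escape. The only cosmetic difference is in the final identification: where the paper's version (via Lemma~\ref{lem:R-hitting-prob}) shows $\Pr[\liminf_m X_m \geq x_2]=1$ and then lets $x_2\to\infty$, you instead observe that on $\{\tau_k=\infty\}$ the stopped process $M^{(k)}_n$ agrees with $f(X_{T_k+n})$, so supermartingale convergence plus a subsequence escaping to $\infty$ gives $f(X_n)\to 0$, and then $\inf_{y\le N} f(y)>0$ for each $N$ forces $X_n\to\infty$. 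Both closings are valid and equivalent in substance.

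One phrasing to tighten: ``on the event $\{\tau_k=\infty\}$ the whole tail $(f(X_n))_{n\ge T_k}$ is itself a bounded nonnegative supermartingale'' is not literally meaningful, since $\{\tau_k=\infty\}$ is a tail event and cannot be conditioned into the filtration. What you actually use (and what you do correctly exploit) is that $M^{(k)}_n$ is a genuine bounded nonnegative supermartingale, hence converges a.s.\ to $M^{(k)}_\infty$, and that on $\{\tau_k=\infty\}$ one has $M^{(k)}_n=f(X_{T_k+n})$ for every $n$, so the convergence transfers to the tail of $f(X_n)$ on that event. Stated this way there is no gap. One further small check worth making explicit is that applying the drift hypothesis at the random time $T_k+n$ is legitimate: since $T_k$ is a stopping time, decompose over $\{T_k=m\}\in\cF_m$ and apply the hypothesis at the deterministic time $m+n$; boundedness of $f$ handles integrability.
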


Now we state the following two criteria for processes on $\R$ that apply to
`two-sided' Lyapunov functions. The recurrence criterion
is Lemma~5.3.15 in~\cite{mpow} and the
 transience criterion follows from Lemma~5.3.16 in~\cite{mpow} as in the proof of Theorem~5.3.1 there.

\begin{lemma}[Recurrence criterion on $\R$] 
\label{lem:rec-R}
Suppose that
  $(X_n)$ is an $(\cF_n)$-adapted process taking values in $\R$ and
  $\limsup_{n\to\infty} |X_n| = \infty$, a.s.  Let $f:\R \to\RP$ be such that
  $\lim_{x \to +\infty} f(x) = \lim_{x \to -\infty} f(x) = \infty$ and $\Exp f(X_0) < \infty$.  Suppose that there
  exist $x_1 \in \RP$ and $C < \infty$ for which, for all $n \geq 0$,
\begin{align*}
\Exp[ f(X_{n+1})-f(X_n) \mid \cF_n] &\leq 0, \text{ on } \{|X_n| > x_1 \};\\
\Exp[ f(X_{n+1})-f(X_n) \mid \cF_n] &\leq C, \text{ on } \{|X_n| \leq x_1 \}.
\end{align*}
Then $\liminf_{n\to\infty} |X_n| \leq x_1$, a.s.
\end{lemma}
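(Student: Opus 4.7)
The plan is to reduce to the standard non-negative supermartingale convergence theorem, in direct parallel with the proof of Lemma~\ref{lem:rec-RP} but with $|X_n|$ playing the role of $X_n$ and using the two-sided growth of $f$. Concretely, the aim is to show $\Pr[\tau < \infty] = 1$, where $\tau := \inf \{ n \in \ZP : |X_n| \leq x_1 \}$ (with the convention $\inf \emptyset = \infty$).

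First, I would consider the stopped process $Z_n := f(X_{n \wedge \tau})$. Since $\{ n < \tau \} \in \cF_n$ is contained in the set $\{ |X_n| > x_1 \}$, the semimartingale hypothesis gives
\[
\Exp [ Z_{n+1} - Z_n \mid \cF_n ] \,=\, \1{n < \tau} \,\Exp [ f(X_{n+1}) - f(X_n) \mid \cF_n ] \,\leq\, 0.
\]
Thus $(Z_n)$ is a non-negative $(\cF_n)$-supermartingale with $\Exp Z_0 = \Exp f(X_0) < \infty$, and Doob's non-negative supermartingale convergence theorem yields an a.s.\ finite limit $Z_\infty := \lim_n Z_n$.

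Next, I would derive a contradiction on the event $\{\tau = \infty\}$. On this event the stopped process agrees with $f(X_n)$, so $f(X_n) \to Z_\infty < \infty$ a.s.\ on $\{\tau = \infty\}$. On the other hand, the non-confinement hypothesis $\limsup_n |X_n| = \infty$ a.s., together with $\lim_{x \to \pm\infty} f(x) = \infty$, forces $\limsup_n f(X_n) = \infty$ a.s. These two conclusions are compatible only if $\Pr[\tau = \infty] = 0$, which is the required assertion that $\liminf_n |X_n| \leq x_1$ a.s.

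I do not expect a substantial obstacle, as the argument is a routine extension of the $\RP$-valued version (Lemma~\ref{lem:rec-RP}) to a two-sided Lyapunov function, simply by interpreting the sublevel set $\{f \text{ small}\}$ as a neighbourhood of $[-x_1,x_1]$ rather than of $[0,x_1]$. The only minor subtlety is that the stopping-time truncation neutralizes the conditional drift on $\{|X_n| \leq x_1\}$, so the constant-$C$ hypothesis is not directly invoked; it is present in the statement because it is the natural companion assumption, and indeed becomes essential when one wants to upgrade the conclusion to, e.g., a bound on $\Exp\tau$ or an existence-of-moments statement.
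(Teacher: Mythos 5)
Your supermartingale setup is correct and is the standard route (the paper itself does not write out a proof but cites Lemma~5.3.15 of~\cite{mpow}, whose argument is of exactly this type). However, there is a genuine gap at the final step: what you have proved is $\Pr[\tau < \infty] = 1$ for the \emph{first} passage time $\tau = \inf\{n : |X_n| \leq x_1\}$, and this is strictly weaker than the stated conclusion $\liminf_{n\to\infty}|X_n| \leq x_1$ a.s. The latter requires $|X_n| \leq x_1 + \eps$ to occur for arbitrarily large $n$; a single visit to $[-x_1,x_1]$ (which may even happen at time $0$) says nothing about the $\liminf$. So the sentence ``which is the required assertion'' is a non sequitur, and the proof stops one step short.

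The missing step is the iteration over successive passage times, and this is precisely where the constant-$C$ hypothesis enters --- your closing remark that it ``is not directly invoked'' is symptomatic of the gap. Concretely: since $Z_n = f(X_{n\wedge\tau})$ is a non-negative supermartingale, Fatou gives $\Exp[f(X_\tau)\1{\tau<\infty}] \leq \Exp f(X_0) < \infty$; then on $\{\tau = n\}\subseteq\{|X_n|\leq x_1\}$ the second hypothesis gives $\Exp[f(X_{\tau+1})] \leq \Exp[f(X_\tau)] + C < \infty$, so the whole argument can be rerun for the process $(X_{\tau+1+m})_{m\geq 0}$ adapted to $(\cF_{\tau+1+m})_{m \geq 0}$ (which still satisfies $\limsup_m |X_{\tau+1+m}| = \infty$ a.s.). Inductively, every successive passage time $\tau_{k+1} = \inf\{n > \tau_k : |X_n| \leq x_1\}$ is a.s.\ finite, so $|X_n| \leq x_1$ infinitely often and hence $\liminf_n |X_n| \leq x_1$ a.s. (Equivalently, one can show $\Exp f(X_N) \leq \Exp f(X_0) + NC < \infty$ for every deterministic $N$ and apply your argument to $\tau^{(N)} = \inf\{n \geq N : |X_n|\leq x_1\}$.) Without the $C$-bound, $\Exp f(X_{\tau+1})$ could be infinite and the restarted supermartingale argument would not be justified in the form you use it, so the hypothesis is essential to the proof of the stated conclusion, not merely a ``natural companion assumption''.
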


\begin{lemma}[Transience criterion on $\R$] 
\label{lem:trans-R}
Suppose that
  $(X_n)$ is an $(\cF_n)$-adapted process taking values in $\R$ and
  $\limsup_{n\to\infty} |X_n| = \infty$, a.s.  Let $f:\R \to\RP$ be such that
  $\sup_x f(x) < \infty$, $\lim_{x\to +\infty} f(x) = \lim_{x \to -\infty} f(x) = 0$, and $\inf_{|y| \leq x} f(y) >0$ for
  any $x \in \RP$.  Suppose also that there exists $x_1 \in \RP$ for which, for all
  $n \geq 0$,
\[
\Exp[ f(X_{n+1})-f(X_n) \mid \cF_n] \leq 0, \text{ on } \{|X_n| > x_1 \}.
\]
Then $\lim_{n\to \infty} |X_n| = \infty$, a.s.
\end{lemma}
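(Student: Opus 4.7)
The plan is to combine an optional-stopping bound with supermartingale convergence, mirroring the standard one-sided transience argument (as in Lemma~\ref{lem:trans-RP}) but adapted to a two-sided Lyapunov function. Roughly: first, show that once the process is far enough from the origin it has probability close to $1$ of never returning to $\{|x| \leq x_1\}$; then, on that event, the non-negative supermartingale $f(X_n)$ must converge to $0$, which by the coercivity hypothesis on $f$ forces $|X_n| \to \infty$.

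For the escape step, set $m_1 := \inf_{|y| \leq x_1} f(y) > 0$ and $M_k := \sup_{|x| > k} f(x)$, noting $M_k \to 0$ by the hypothesis $f(x) \to 0$ as $|x| \to \infty$. For each integer $k > x_1$ define $\sigma_k := \inf\{ n \in \ZP : |X_n| > k \}$, which is a.s.\ finite by the non-confinement hypothesis, and $\tau_k := \inf\{ n \geq \sigma_k : |X_n| \leq x_1 \}$. The process $Y_n := f(X_{(\sigma_k + n) \wedge \tau_k})$ is bounded (by $\sup_x f(x) < \infty$) and, by the drift assumption applied on $\{ \sigma_k + n < \tau_k \} \subseteq \{ |X_{\sigma_k + n}| > x_1 \}$, a non-negative supermartingale relative to the filtration $(\cF_{\sigma_k + n})$. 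Bounded convergence then yields
\[
\Exp \bigl[ f(X_{\tau_k}) \1{\tau_k < \infty} \bigmid \cF_{\sigma_k} \bigr] \leq f(X_{\sigma_k}) \leq M_k ,
\]
and since $f(X_{\tau_k}) \geq m_1$ on $\{\tau_k < \infty\}$, this gives $\Pr[ \tau_k < \infty] \leq M_k / m_1 \to 0$.

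For the convergence step, set $A_k := \{ \tau_k = \infty\}$. Because $\sigma_k$ is non-decreasing in $k$, the $A_k$ are increasing, so $\Pr \bigl[ \bigcup_k A_k \bigr] = \lim_k \Pr[A_k] = 1$. On $A_k$, for every $n \geq \sigma_k$ we have $|X_n| > x_1$, so $f(X_n)$ is a bounded non-negative supermartingale from time $\sigma_k$ onward and converges a.s.\ to some $L_\infty \geq 0$. The non-confinement assumption $\limsup |X_n| = \infty$ produces a subsequence along which $f(X_{n_j}) \to 0$, forcing $L_\infty = 0$. Finally, if $|X_n|$ failed to tend to $\infty$, a bounded subsequence $|X_{m_j}| \leq K$ would give $f(X_{m_j}) \geq \inf_{|y| \leq K} f(y) > 0$, contradicting $L_\infty = 0$. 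The only real subtlety, and the step where care is needed, is coupling the escape-probability bound with the supermartingale property: the latter is only guaranteed once the trajectory stays in $\{|x| > x_1\}$ forever, which is precisely what localising to the event $A_k$ provides. Everything else is a mechanical two-sided adaptation of the proof on $\RP$.
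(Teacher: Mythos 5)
Your proof is correct, and it is essentially the standard argument that the paper invokes by citation (Lemma~5.3.16 and the proof of Theorem~5.3.1 of~\cite{mpow}) rather than writing out: the escape estimate $\Pr[\tau_k<\infty]\leq M_k/m_1$, obtained by optional stopping of the bounded non-negative supermartingale $f(X_{(\sigma_k+n)\wedge\tau_k})$, is exactly the two-sided analogue of Lemma~\ref{lem:R-hitting-prob}. The only cosmetic difference is in the wrap-up: you apply the escape estimate at the single level $x_1$ and then extract $|X_n|\to\infty$ from a.s.\ convergence of the supermartingale together with $\limsup_{n}|X_n|=\infty$ and the positive lower bound of $f$ on bounded sets, whereas the cited route applies the estimate at every level $x_2>x_1$ to get $\liminf_n|X_n|\geq x_2$ directly; both are sound (your phrase ``on $A_k$, $f(X_n)$ is a supermartingale'' should strictly be read as a statement about the already-defined stopped process $Y_n$, which coincides with $f(X_{\sigma_k+n})$ on $A_k$).
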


For processes on $\R$, we also have criteria for directional or oscillatory transience.

\begin{lemma}[Directional transience criterion]
\label{lem:directional-transience}
 Suppose that $(X_n)$ is an $(\cF_n)$-adapted process taking values in $\R$
  and $\limsup_{n\to \infty}|X_n| = \infty$, a.s.  Let $f: \R \to \RP$ be such that
  $\sup_x f(x) < \infty$, $\lim_{x \to +\infty} f(x) = 0$ and $\inf_{y \leq x} f(y) > 0$
  for any $x \in \RP$.  Suppose also that there exists $x_1 \in \RP$ for which for all
  $n\geq 0$,
\begin{align*}
&\Exp[f(X_{n+1}) - f(X_n) \mid \cF_n ] \leq 0, \text{ on } \{ X_n > x_1 \};\\
&\Exp[f(-X_{n+1}) - f(-X_n) \mid \cF_n ] \leq 0, \text{ on } \{ X_n < -x_1 \}.
\end{align*}
Then $\lim_{n\to\infty} X_n \in \{ -\infty, + \infty\}$, a.s.
\end{lemma}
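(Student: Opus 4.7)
The plan is to exploit the one-sided supermartingale condition on $\{X_n > x_1\}$---and its symmetric counterpart on $\{X_n < -x_1\}$---to rule out any oscillation between the two tail regions, and then to show that once the path is confined to one tail region it must escape to $\pm\infty$. Fix $k > x_1$ and set $c_1 := \inf_{y \leq x_1} f(y)$, which is positive by hypothesis. Let
\[ T_0 := \inf\{n \geq 0 : X_n > k\}, \qquad T_1 := \inf\{n > T_0 : X_n \leq x_1\}. \]
On $\{T_0 < \infty\}$, the first supermartingale hypothesis makes $M_n := f(X_{(T_0+n) \wedge T_1})$, $n \geq 0$, a non-negative $(\cF_{T_0+n})$-supermartingale, so $M_n$ converges almost surely to some $M_\infty \geq 0$ by Doob's theorem.

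On $\{T_0 < \infty, T_1 = \infty\}$ we have $X_n > x_1 \geq 0$ for every $n \geq T_0$, and, combined with $\limsup_n |X_n| = \infty$, this forces $\limsup_n X_n = +\infty$; along the witnessing subsequence, $f(X_n) \to 0$, so $M_\infty = 0$ on the event. The positivity property $\inf_{y \leq x} f(y) > 0$ then excludes $\liminf_n X_n$ being finite (otherwise infinitely many $X_n$ would stay bounded above by some fixed $x$, keeping $f(X_n)$ bounded below by a positive constant), so $X_n \to +\infty$ on $\{T_0 < \infty, T_1 = \infty\}$. Mirroring the argument with $Y_n := -X_n$ and the second hypothesis yields $X_n \to -\infty$ on the analogous event for the left tail.

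To rule out the remaining oscillatory behaviour, I apply conditional Fatou to $M_n$: since $\liminf_n M_n \geq f(X_{T_1}) \1{T_1 < \infty}$, on $\{T_0 < \infty\}$ we have
\[ \Exp\bigl[ f(X_{T_1}) \1{T_1 < \infty} \mid \cF_{T_0} \bigr] \leq \liminf_n \Exp[M_n \mid \cF_{T_0}] \leq M_0 = f(X_{T_0}) \leq \sup_{y > k} f(y). \]
As $X_{T_1} \leq x_1$ gives $f(X_{T_1}) \geq c_1$, taking expectations yields
\[ c_1 \Pr[ T_0(k) < \infty, \, T_1(k) < \infty ] \leq \sup_{y > k} f(y), \]
and the right-hand side tends to $0$ as $k \to \infty$ because $\lim_{x \to +\infty} f(x) = 0$. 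On the event $E_+ := \{ \limsup_n X_n = +\infty \} \cap \{ X_n \leq x_1 \io \}$ both $T_0(k)$ and $T_1(k)$ are finite for every $k > x_1$, so $\Pr[E_+] = 0$; the symmetric argument (using the second hypothesis on $Y_n = -X_n$) gives $\Pr[E_-] = 0$, where $E_- := \{ \liminf_n X_n = -\infty\} \cap \{ X_n \geq -x_1 \io\}$.

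Because $\limsup_n |X_n| = \infty$ almost surely, at least one of $\limsup_n X_n = +\infty$ and $\liminf_n X_n = -\infty$ must hold; on $(E_+ \cup E_-)^c$, whichever holds forces the path eventually to lie in $\{X > x_1\}$ (respectively $\{X < -x_1\}$), and the second paragraph then gives $X_n \to +\infty$ (respectively $-\infty$). The delicate point in the argument is the Fatou step: one must confirm that the supermartingale inequality survives the passage through a possibly infinite stopping time $T_1$ and yields the uniform-in-$k$ bound on $\Pr[T_1(k) < \infty]$ that vanishes as $k \to \infty$. Everything else amounts to routine bookkeeping with hitting times.
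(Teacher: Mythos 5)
Your proof is correct, and its engine is the same as the paper's: the observation that $f(X_n)$, run from an upcrossing of a high level $k$ and stopped at the first subsequent downcrossing of $x_1$, is a non-negative supermartingale, so the probability of completing such an excursion is at most $\sup_{y>k} f(y) / \inf_{y\leq x_1} f(y)$, which vanishes as $k \to \infty$. The packaging, however, differs. The paper isolates this estimate as Lemma~\ref{lem:R-hitting-prob} (whose proof is delegated to Lemma~3.5.7 of~\cite{mpow}) and then concludes in one stroke: applying the two one-sided estimates at the a.s.\ finite first exit time $\sigma_x$ from $[-x,x]$ gives $\Pr[\{\liminf_n X_n \geq x_2\} \cup \{\limsup_n X_n \leq -x_2\}] \geq 1-\eps$, and letting $\eps \downarrow 0$ and then $x_2 \uparrow \infty$ finishes. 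You instead prove the excursion estimate inline via conditional Fatou, use it to annihilate the two oscillation events $E_\pm$, and then need a separate step --- Doob convergence of the non-negative supermartingale, whose limit is forced to be $0$ because $f(X_{n_j}) \to 0$ along a subsequence tending to $+\infty$, combined with $\inf_{y\leq x} f(y) > 0$ --- to show that a path eventually confined to $\{X > x_1\}$ must actually diverge to $+\infty$; in the paper that step is unnecessary, since Lemma~\ref{lem:R-hitting-prob} controls $\inf_{m\geq n} X_m \geq x_2$ for arbitrary $x_2$ directly. The trade-off is that your argument is fully self-contained (no appeal to~\cite{mpow}), at the cost of the extra ``confined implies divergent'' step and the bookkeeping needed to cover the event ``$X_n > x_1$ eventually'' by the events $\{T_0(k) < \infty,\, T_1(k) = \infty\}$ over a countable family of levels $k$ (or by re-running the stopped-supermartingale argument from each deterministic start time); you correctly flag this as routine, and it is.
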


\begin{lemma}[Oscillatory transience criterion]
\label{lem:oscillatory-transience}
Suppose that $(X_n)$ is an $(\cF_n)$-adapted process taking values in $\R$ and $\lim_{n\to
  \infty} |X_n| = \infty$, a.s.  Let $f: \R \to \RP$ be   such that
$\lim_{x \to +\infty} f(x) =  \infty$ and $\Exp f (X_0 ) < \infty$.
Suppose that there exist $x_1 \in \R$ and $C < \infty$ for
  which, for all $n \geq 0$,
\begin{align*}
\Exp[ f(X_{n+1})-f(X_n) \mid \cF_n] &\leq 0, \text{ on } \{X_n > x_1\};\\
\Exp[ f(X_{n+1})-f(X_n) \mid \cF_n] &\leq C, \text{ on } \{ X_n \leq x_1\};\\
\Exp[ f(-X_{n+1})-f(-X_n) \mid \cF_n] &\leq 0, \text{ on } \{X_n < -x_1\};\\
\Exp[ f(-X_{n+1})-f(-X_n) \mid \cF_n] &\leq C, \text{ on } \{ X_n \geq - x_1\}.
\end{align*}
Then $\liminf_{n\to\infty} X_n = -\infty$ and $\limsup_{n\to\infty} X_n =
+\infty$, a.s.
\end{lemma}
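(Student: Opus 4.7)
The plan is to decompose the probability-one event $\{\lim_n |X_n| = \infty\}$ into the three pieces $A_+ := \{\lim_n X_n = +\infty\}$, $A_- := \{\lim_n X_n = -\infty\}$, and the desired oscillation event $A_{\mathrm{osc}} := \{\liminf_n X_n = -\infty, \, \limsup_n X_n = +\infty\}$, and then use the first pair of hypotheses (bounding $\Exp[f(X_{n+1}) - f(X_n) \mid \cF_n]$) to show $\Pr(A_+) = 0$, and the symmetric pair (bounding $\Exp[f(-X_{n+1}) - f(-X_n) \mid \cF_n]$) to show $\Pr(A_-) = 0$. Since the assumption $\lim_n |X_n| = \infty$ rules out anything else, we will then conclude $\Pr(A_{\mathrm{osc}}) = 1$.

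To show $\Pr(A_+) = 0$, fix $N \in \ZP$ and consider the stopping time $\sigma_N := \inf\{n \geq N : X_n \leq x_1\}$. By the first hypothesis, whenever $N + m < \sigma_N$ we have $X_{N+m} > x_1$, so the stopped process $Y_m := f(X_{(N+m) \wedge \sigma_N})$ satisfies $\Exp[Y_{m+1} - Y_m \mid \cF_{N+m}] \leq 0$. Iterating the combined one-step bound $\Exp[f(X_{n+1}) - f(X_n) \mid \cF_n] \leq C \1{X_n \leq x_1}$ from $0$ to $N-1$ gives $\Exp f(X_N) \leq \Exp f(X_0) + CN < \infty$, so $Y_0 = f(X_N)$ is integrable. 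Hence $(Y_m)$ is a nonnegative supermartingale and converges almost surely to a finite limit. On every sample path in $A_+$, eventually $X_n > x_1$, so $\sigma_N = \infty$ for some (random) finite $N$; on $\{\sigma_N = \infty\}$, $Y_m = f(X_{N+m})$, and a finite limit contradicts $\lim_{x \to +\infty} f(x) = \infty$. Therefore $\Pr(A_+ \cap \{\sigma_N = \infty\}) = 0$ for each $N$, and since $A_+ \subseteq \bigcup_N \{\sigma_N = \infty\}$ (a countable union), $\Pr(A_+) = 0$.

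Running the identical argument for $(-X_n)$, using the third and fourth hypotheses together with the fact that $X_n \to -\infty$ implies $f(-X_n) \to \infty$, gives $\Pr(A_-) = 0$. Combined with the almost-sure partition above, this yields $\Pr(A_{\mathrm{osc}}) = 1$, which is the claim. The one delicate point worth spelling out is the construction of the stopped supermartingale: by killing the process at $\sigma_N$, the additive term $C \1{X_n \leq x_1}$ (which prevents $f(X_n)$ itself from being a supermartingale) never appears after time $N$, and this is what makes the nonnegative supermartingale convergence theorem directly applicable.
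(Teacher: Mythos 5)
Your proof is correct and follows essentially the same route as the paper: the same three-way decomposition of $\{\lim_n|X_n|=\infty\}$ into $A_+$, $A_-$, and the oscillation event, with each of $A_\pm$ ruled out by a one-sided supermartingale argument. The only difference is presentational: the paper delegates the step ``$X_n\not\to+\infty$ a.s.'' to an auxiliary lemma (Lemma~\ref{lem:left-recur-R}, whose proof is cited from~\cite{mpow}), whereas you prove it inline via the stopped nonnegative supermartingale $f(X_{(N+m)\wedge\sigma_N})$ --- which is precisely the argument underlying that cited lemma.
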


We give the proofs of these two results. 
A key step in the proof of Lemma~\ref{lem:directional-transience}
is the following hitting probability estimate.

\begin{lemma}\label{lem:R-hitting-prob}
Suppose that $(X_n)$ is an $\cF_n$-adapted process taking values in $\R$.  Let $f : \R \to
\RP$ be such that $\sup_x f(x) < \infty$ and $\lim_{x\to +\infty} f(x) = 0$.  Suppose that
there exists $x_2 \in \R$ for which $\inf_{y\leq x_2} f(y) > 0$ and, for all $n \geq 0$,
\[
\Exp[f(X_{n+1}) - f(X_n) \mid \cF_n] \leq 0, \text{ on $\{X_n > x_2\}$}.
\]
Then for any $\eps > 0$ there exists $x \in (x_2,\infty)$ for which, for all $n \geq 0$,
\[
\Pr\Big[ \inf_{m\geq n} X_m \geq x_2 \,\Big|\, \cF_n \Big] \geq 1-\eps, \text{ on $\{X_n > x \}$}.
\]
\end{lemma}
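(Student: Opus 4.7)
The plan is to exploit the supermartingale property of $f(X_n)$ on the region $\{X_n > x_2\}$ by stopping the process as soon as it leaves this region, and then to use a standard nonnegative-supermartingale overshoot argument to bound the probability of ever reaching $(-\infty, x_2]$.

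First, fix $n \geq 0$ and introduce the stopping time $\sigma := \inf\{ m \geq n : X_m \leq x_2 \}$, with the usual convention $\inf \emptyset = \infty$. Define the stopped process $Y_m := f(X_{m \wedge \sigma})$ for $m \geq n$. I would verify that $(Y_m)_{m \geq n}$ is an $(\cF_m)$-adapted nonnegative supermartingale: on the event $\{m < \sigma\}$ we have $X_m > x_2$, so by hypothesis $\Exp[f(X_{m+1}) - f(X_m) \mid \cF_m] \leq 0$, while on $\{m \geq \sigma\}$ the process $Y$ is constant. Combining these, $\Exp[Y_{m+1} - Y_m \mid \cF_m] \leq 0$.

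Next, since $(Y_m)$ is a nonnegative supermartingale it converges a.s.\ to some finite limit $Y_\infty$, and Fatou's lemma (or the optional stopping/convergence theorem for nonnegative supermartingales) gives
\[
\Exp[ Y_\infty \mid \cF_n ] \leq Y_n = f(X_n).
\]
On the event $\{\sigma < \infty\}$ we have $Y_\infty = f(X_\sigma)$ and $X_\sigma \leq x_2$, so $Y_\infty \geq c := \inf_{y \leq x_2} f(y)$, which is strictly positive by hypothesis. Therefore
\[
c \, \Pr[\sigma < \infty \mid \cF_n ] \leq \Exp[ Y_\infty \1{\sigma < \infty} \mid \cF_n ] \leq \Exp[ Y_\infty \mid \cF_n ] \leq f(X_n).
\]

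Finally, given $\eps > 0$, I use the condition $\lim_{x \to +\infty} f(x) = 0$ to choose $x \in (x_2, \infty)$ large enough that $f(y) \leq c\eps$ for all $y > x$; then on $\{X_n > x\}$ we get $\Pr[\sigma < \infty \mid \cF_n] \leq \eps$, and the event $\{\sigma = \infty\}$ is exactly $\{\inf_{m \geq n} X_m > x_2\} \subseteq \{\inf_{m \geq n} X_m \geq x_2\}$, yielding the desired bound. The only subtlety worth care is the setup of the stopped process so that the one-sided supermartingale inequality extends to all times (and in particular survives on the non-supermartingale region $\{X_n \leq x_2\}$), but this is handled automatically by stopping at $\sigma$; no significant obstacle is anticipated.
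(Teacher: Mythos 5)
Your proof is correct and is precisely the standard nonnegative-supermartingale argument that the paper invokes by reference (it cites the proof of Lemma~3.5.7 of~\cite{mpow}, which is exactly this: stop at the first entry into $(-\infty,x_2]$, apply supermartingale convergence and Fatou, and bound $\Pr[\sigma<\infty\mid\cF_n]$ by $f(X_n)/\inf_{y\le x_2}f(y)$). The only nit is that $\{\sigma=\infty\}$ is not \emph{exactly} $\{\inf_{m\ge n}X_m>x_2\}$ (the infimum may equal $x_2$), but the inclusion $\{\sigma=\infty\}\subseteq\{\inf_{m\ge n}X_m\ge x_2\}$ that you actually use is valid.
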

\begin{proof}
The idea is standard:~the proof of Lemma~3.5.7 of~\cite{mpow},
although that result is stated for processes on $\RP$,
carries over directly to this case.
\end{proof}
 
\begin{proof}[Proof of Lemma~\ref{lem:directional-transience}]
Under the conditions of lemma, it is clear that the hypotheses of
Lemma~\ref{lem:R-hitting-prob} hold for \emph{any} $x_2$ with $x_2 > x_1 >0$ and for \emph{both}
$(X_n)$ and $(-X_n)$.  So for any $x_2 > x_1$ and $\eps>0$ there exists $x \in
(x_2,\infty)$ for which, for all $n \geq 0$,
\begin{align*}
\Pr \Big[ \inf_{m\geq n} X_m \geq x_2 \Bigmid \cF_n \Big] & \geq 1- \eps, \text{ on $\{X_n > x \}$, and}
\\
\Pr \Big[ \inf_{m\geq n} -X_m \geq x_2 \Bigmid \cF_n \Big] & \geq 1- \eps, \text{ on $\{-X_n > x \}$}.
\end{align*}
In other words, 
\[
\Pr\Big[ \big\{ \inf_{m\geq n} X_m \geq x_2 \big\} \cup \big\{ \sup_{m\geq n} X_m \leq -x_2 \big\}  \Bigmid \cF_n \Big] \geq 1-\eps, \text{ on $\{|X_n| > x \}$}.
\]
Now, let $\sigma_x = \min \{ n \in \ZP : |X_n| > x\}$.  Then, on $\{\sigma_x < \infty \}$,
\[
\Pr\Big[ \big\{ \inf_{m\geq \sigma_x} X_m \geq x_2 \big\} \cup \big\{ \sup_{m\geq \sigma_x} X_m \leq
-x_2 \big\}  \Bigmid \cF_{\sigma_x} \Big] \geq 1-\eps, \as
\]
But 
if $\sigma_x < \infty$ and either $X_m \geq x_2$ for all $m \geq \sigma_x$
or $X_m \leq - x_2$ for all $m \geq \sigma_x$, we have that
 $\{ \liminf_{m \to \infty} X_m \geq x_2
\} \cup \{ \limsup_{m \to \infty} X_m \leq -x_2 \}$ occurs. Thus
\[
\begin{split}
\Pr\Big[ \big\{ \liminf_{m \to \infty} X_m \geq x_2 \big\} &\cup \big\{ \limsup_{m \to \infty} X_m \leq -x_2
\big\}\Big]\\
&\geq \Exp \bigg[ \Pr\Big[ \big\{ \inf_{m\geq \sigma_x} X_m \geq x_2 \big\} \cup \big\{ \sup_{m\geq \sigma_x} X_m \leq
-x_2 \big\}  \Bigmid \cF_{\sigma_x} \Big] \1{\sigma_x < \infty} \bigg]\\
&\geq (1-\eps)\Pr[ \sigma_x < \infty].
\end{split}
\]
Given $\limsup_{n \to \infty} |X_n| = \infty$, a.s., we have $\Pr[\sigma_x <
\infty] = 1$, and since $\eps>0$ was arbitrary,
\[
\Pr\Big[ \big\{ \liminf_{m \to \infty} X_m \geq x_2 \big\} \cup \big\{ \limsup_{m \to \infty} X_m \leq -x_2
\big\}\Big] = 1.
\]
Then, since $x_2 > x_1$ was also arbitrary, with the fact that
$\liminf_{n \to \infty} X_m \geq x$ and
$\limsup_{n \to \infty} X_m \leq - y$ 
are mutually exclusive for $x, y >0$,
the result follows.
\end{proof}

Now we turn to the proof of Lemma~\ref{lem:oscillatory-transience}.
First, we give a variation on Lemma~\ref{lem:rec-RP} for processes
on $\R$; the proof from~\cite[p.~113]{mpow} carries across directly to this setting.

\begin{lemma}
\label{lem:left-recur-R} 
Suppose that $(X_n)$ is an $(\cF_n)$-adapted process taking values in $\R$.
  Let $f:\R \to\RP$ be such that $f(x) \to \infty$ as $x \to +\infty$ and
  $\Exp f(X_0) < \infty$.  Suppose that there exist $x_1 \in \R$ and $C < \infty$ for
  which, for all $n \geq 0$,
\begin{align*}
\Exp[ f(X_{n+1})-f(X_n) \mid \cF_n] &\leq 0, \text{ on $\{X_n > x_1\}$, a.s.};\\
\Exp[ f(X_{n+1})-f(X_n) \mid \cF_n] &\leq C, \text{ on $\{X_n \leq x_1\}$, a.s.}
\end{align*}
Then
\[
\Pr\big[ \{ \limsup_{n \to \infty} X_n < \infty \} \cup \{ \liminf_{n \to \infty} X_n \leq
x_1 \} \big] = 1.
\]
In particular, $\liminf_{n\to\infty} X_n < \infty$, a.s.
\end{lemma}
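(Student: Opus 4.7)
The plan is to adapt the standard non-negative supermartingale argument used for Lemma~\ref{lem:rec-RP} to this two-sided setting. The central observation is that on the event $\{\liminf_n X_n > x_1\}$, the process $X_n$ is eventually above $x_1$ forever, so from some random time on the first hypothesis turns $f(X_n)$ into a non-negative supermartingale, which then converges a.s.\ to a finite limit; since $f(x) \to \infty$ as $x \to +\infty$, this forces $\limsup_n X_n < \infty$.

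To implement this cleanly, first I would verify that $\Exp f(X_n) < \infty$ for every $n \in \ZP$. Combining the two displayed inequalities of the hypothesis gives $\Exp[f(X_{n+1}) \mid \cF_n] \leq f(X_n) + C \1{X_n \leq x_1}$ on all of $\Omega$, so an easy induction starting from $\Exp f(X_0) < \infty$ yields $\Exp f(X_n) \leq \Exp f(X_0) + nC$ for all $n$. Then, for each $k \in \N$, I would introduce the stopping time $\sigma_k := \min\{n \geq k : X_n \leq x_1\}$ (with $\min \emptyset := \infty$) and the stopped process $Y^{(k)}_n := f(X_{\sigma_k \wedge (k+n)})$, $n \geq 0$, adapted to $(\cF_{k+n})$. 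On $\{\sigma_k > k+n\}$ one has $X_{k+n} > x_1$, so the first hypothesis yields $\Exp[Y^{(k)}_{n+1} - Y^{(k)}_n \mid \cF_{k+n}] \leq 0$; on $\{\sigma_k \leq k+n\}$ the process is constant. Hence $(Y^{(k)}_n)_{n \geq 0}$ is a non-negative supermartingale with $\Exp Y^{(k)}_0 = \Exp f(X_k) < \infty$, and Doob's convergence theorem produces an a.s.\ finite limit $Y^{(k)}_\infty$.

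Now on the event $\{\sigma_k = \infty\}$ the identity $Y^{(k)}_n = f(X_{k+n})$ holds for every $n$, so $f(X_n)$ itself converges to a finite limit as $n \to \infty$; because $f \geq 0$ and $f(x) \to \infty$ as $x \to +\infty$, this rules out any subsequence of $X_n$ tending to $+\infty$, giving $\limsup_n X_n < \infty$ on $\{\sigma_k = \infty\}$. Observing that $\{\liminf_n X_n > x_1\} \subseteq \bigcup_{k \in \N} \{\sigma_k = \infty\}$, since $\liminf X_n > x_1$ forces $X_n > x_1$ for all $n$ past some $k$, I conclude $\{\liminf_n X_n > x_1\} \subseteq \{\limsup_n X_n < \infty\}$, which is equivalent to saying the displayed union has full probability. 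The ``in particular'' statement then follows because both $\{\limsup_n X_n < \infty\}$ and $\{\liminf_n X_n \leq x_1\}$ are contained in $\{\liminf_n X_n < \infty\}$.

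No serious obstacle is expected: the argument is essentially a transcription of the half-line version from \cite[p.~113]{mpow}. The only point requiring a little care is that the single hypothesis ``supermartingale when $X_n > x_1$, plus bounded excursion cost $C$ otherwise'' does not directly give a supermartingale on all of $\ZP$; the family of shifted stopping times $\{\sigma_k\}_{k \in \N}$ is the standard device that circumvents this by restricting attention to tails that never dip into $\{X_n \leq x_1\}$, which is exactly the set of sample paths one needs to control in order to rule out $\{\liminf X_n > x_1, \limsup X_n = +\infty\}$.
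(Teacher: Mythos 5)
Your argument is correct and is essentially the same as the paper's: the paper simply asserts that the proof of the half-line recurrence criterion from \cite[p.~113]{mpow} carries over, and that proof is precisely the stopped non-negative supermartingale argument you give (integrability by induction, the shifted stopping times $\sigma_k$, Doob convergence on $\{\sigma_k=\infty\}$, and the inclusion $\{\liminf_n X_n > x_1\}\subseteq\bigcup_k\{\sigma_k=\infty\}$).
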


\begin{proof}[Proof of Lemma~\ref{lem:oscillatory-transience}]
Since $\lim_{n\to\infty} |X_n| = \infty$, a.s., we have that
\[
\Pr\big[ \{ \lim_{n\to \infty} X_n = +\infty \} \cup  \{ \lim_{n\to \infty} X_n = - \infty \} 
\cup \{ \limsup_{n\to\infty} X_n = \limsup_{n \to \infty} (-X_n) = \infty \}\big] = 1.
\]
But Lemma~\ref{lem:left-recur-R} applied to $(X_n)$ implies that
$\liminf_{n\to \infty} X_n < \infty$, a.s., and hence $X_n \not\to +\infty$,
a.s.  Similarly, the lemma applied to $(-X_n)$ implies that
$X_n \not\to -\infty$, a.s., and therefore
$\lim_{n\to \infty} |X_n| = \limsup_{n\to\infty} X_n = \limsup_{n \to \infty}( -X_n) =
\infty$, a.s.
\end{proof}

Finally, we state two results that provide general conditions for the existence and
non-existence of certain moments of passage times. 
These are reformulations of Theorem~1 and Corollary~1 of~\cite{aim}
(see also \cite[\S 6.1]{mpew} or~\cite[\S 2.7]{mpow}).

\begin{lemma}
\label{lem:finite-moments}
Let $Y_n$ be an integrable $\cF_n$-adapted stochastic process, taking values in an
unbounded subset of $\RP$, with $Y_0 = y_0$ fixed.  For $x>0$, let $\sigma_x := \inf\{n\geq
0: Y_n \leq x\}$.  Suppose that there exist $\delta > 0 $, $x>0$ and $\kappa < 1$ such
that for any $n\geq 0$,
\begin{equation}\label{eqn:finite-mom-super}
\Exp[Y_{n+1} - Y_n \mid \cF_n] \leq -\delta Y_n^\kappa, \text{ on $\{n < \sigma_x \}$}.
\end{equation}
Then for any $p \in [0,1/(1-\kappa))$, $\Exp[\sigma_x^p] < \infty$.
\end{lemma}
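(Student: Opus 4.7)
The plan is to derive this lemma from Theorem~1 of~\cite{aim}, reducing our power-law drift hypothesis to the linear-drift setting treated there via a standard Lyapunov transformation.

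First, I would transform the drift condition. For $\kappa\in[0,1)$, the function $\phi(y):=y^{1-\kappa}$ is concave, so the tangent-plane inequality $\phi(Y_{n+1})\leq \phi(Y_n)+\phi'(Y_n)(Y_{n+1}-Y_n)$, combined with the drift hypothesis, gives on $\{n<\sigma_x\}$,
\[
\Exp[Y_{n+1}^{1-\kappa}-Y_n^{1-\kappa}\mid\cF_n]\leq (1-\kappa)Y_n^{-\kappa}\Exp[Y_{n+1}-Y_n\mid\cF_n]\leq -(1-\kappa)\delta.
\]
With $a:=2/((1-\kappa)\delta)$, the process $M_n:=n+aY_n^{1-\kappa}$ is non-negative and satisfies the ``unit negative drift'' bound $\Exp[M_{n+1}-M_n\mid\cF_n]\leq -1$ on $\{n<\sigma_x\}$. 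When $\kappa<0$, $\phi$ is convex and Jensen points the wrong way; then I would instead take $V(y):=y^s$ for a suitable $s\in(0,1)$, combining concavity of $V$ with a Taylor expansion of $V(y-\delta y^\kappa)$ and the lower bound $Y_n>x$ on $\{n<\sigma_x\}$ to obtain an analogous linear-drift bound.

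For $p\leq 1$ the argument closes quickly: by concavity of $t\mapsto t^p$, the stopped process $(M_{n\wedge\sigma_x})^p$ is a supermartingale, and since $M_n\geq n$, optional stopping together with monotone convergence yield $\Exp[\sigma_x^p]\leq (ay_0^{1-\kappa})^p<\infty$.

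The main obstacle is the regime $p\in(1,1/(1-\kappa))$, which is non-empty precisely when $\kappa\in(0,1)$. Here $t\mapsto t^p$ is convex, Jensen's inequality no longer delivers a supermartingale, and the simple reduction above falls short. I would invoke the full construction of~\cite[Theorem~1]{aim}: its technical core is a carefully chosen Lyapunov function roughly of the form $L(y,n)=(n+ay^{1-\kappa})^q$ with $q<1/(1-\kappa)$, shown to be a supermartingale up to $\sigma_x$ via a second-order expansion that exploits the explicit power-law structure of $M_n$; the sharp exponent $1/(1-\kappa)$ emerges precisely at the threshold of applicability of this step. Rather than reproduce it, I would cite~\cite[Theorem~1]{aim} (or equivalently~\cite[Theorem~2.7.1]{mpow}).
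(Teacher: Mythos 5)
The paper offers no proof of this lemma at all: it is stated as a direct reformulation of Theorem~1 of~\cite{aim}, the intended reduction being the substitution $Z_n := Y_n^{(1-\kappa)/2}$ with $p_0 := 1/(1-\kappa)$, under which $Z_n^{2p_0} = Y_n$ and $Z_n^{2p_0-2} = Y_n^{\kappa}$, so that hypothesis~\eqref{eqn:finite-mom-super} becomes \emph{verbatim} the drift condition $\Exp[Z_{n+1}^{2p_0} - Z_n^{2p_0} \mid \cF_n] \leq -\delta Z_n^{2p_0-2}$ of that theorem, the passage times matching via $\sigma_x(Y) = \sigma_{x^{(1-\kappa)/2}}(Z)$; no case analysis in $\kappa$ or $p$ is needed. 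Your strategy is the same in spirit, since you too ultimately cite \cite[Theorem~1]{aim} for the hard regime, and your self-contained treatment of $\kappa\in[0,1)$ and $p\leq 1$ (unit negative drift for $M_n = n + aY_n^{1-\kappa}$, then concavity of $t\mapsto t^p$ and monotone convergence) is correct as far as it goes.

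The genuine gap is the case $\kappa<0$. There the claimed range is $p<1/(1-\kappa)<1$, so by your own division of labour everything rests on your first paragraph, and the proposed repair ($V(y)=y^s$ with $s\in(0,1)$, Taylor expansion, and the bound $Y_n>x$) cannot succeed: concavity of $V$ only yields $\Exp[V(Y_{n+1})-V(Y_n)\mid\cF_n]\leq -s\delta\,Y_n^{s-1+\kappa}$ with exponent $s-1+\kappa<0$, a bound that \emph{vanishes} as $Y_n\to\infty$ (the restriction $Y_n>x$ helps in the wrong direction), so no transform produces a process with drift bounded above by a negative constant. Nor should one exist: if it did, your second paragraph would give $\Exp[\sigma_x^{p}]<\infty$ for all $p\leq 1$, which is strictly more than the lemma asserts, and the threshold $1/(1-\kappa)$ is sharp (compare Lamperti-type examples with drift of order $-1/Y_n$, i.e.\ $\kappa=-1$, where only moments below $1/2$ exist). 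Thus for $\kappa<0$ the full machinery of~\cite{aim} is needed even for $p<1$; the clean fix is to abandon the case split and apply \cite[Theorem~1]{aim} to $Z_n=Y_n^{(1-\kappa)/2}$ as above, which is exactly the reformulation the paper has in mind.
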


\begin{lemma}
\label{lem:infinite-moments}
Let $Y_n$ be an integrable $\cF_n$-adapted stochastic process, taking values in an
unbounded subset of $\RP$, with $Y_0 = y_0$ fixed.  For $x>0$, let $\sigma_x := \inf\{n
\geq 0 : Y_n \leq x\}$.  Suppose that there exist $C_1,C_2 > 0$, $x>0$, $p>0$ and $r>1$
such that for any $n\geq0$, on $\{n < \sigma_x \}$ the following hold:
\begin{align}
\Exp[Y_{n+1} - Y_n \mid \cF_n ] &\geq -C_1;\label{eqn:Y-inc-lower-bound}\\
\Exp[Y_{n+1}^r - Y_n^r \mid \cF_n ] &\leq C_2Y_n^{r-1};\label{eqn:Yr-inc-bound}\\
\Exp[Y_{n+1}^p - Y_n^p \mid \cF_n ] &\geq 0.\label{eqn:Yp-submart}
\end{align}
Then for any $q >p$, $\Exp[\sigma_x^q] = \infty$ for $y_0 > x$.
\end{lemma}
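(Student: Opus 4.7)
The strategy is to convert the submartingale condition~\eqref{eqn:Yp-submart} into a polynomial lower bound on the tail $\Pr[\sigma_x > n]$, which then forces the $q$-th moment of $\sigma_x$ to diverge. The required lower bound combines three ingredients: the submartingale inequality for $Y^p$ stopped at $T := \sigma_x$, the near-boundedness $Y_T \leq x$, and an $L^r$-growth control derived from~\eqref{eqn:Yr-inc-bound}.

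Write $V_n := Y_{n \wedge T}^p$. Hypothesis~\eqref{eqn:Yp-submart} makes $(V_n)$ a nonnegative submartingale, so $\Exp[V_n] \geq V_0 = y_0^p$ for every $n$. Decomposing
\[ \Exp[V_n] = \Exp[Y_T^p \1{T \leq n}] + \Exp[Y_n^p \1{T > n}] \]
and using $0 \leq Y_T \leq x$ to bound the first term by $x^p$ gives $\Exp[Y_n^p \1{T > n}] \geq y_0^p - x^p > 0$, uniformly in $n$. Next, I would control $M_n := \Exp[Y_{n \wedge T}^r]$: taking expectations in~\eqref{eqn:Yr-inc-bound} and applying Jensen's inequality (since $(r-1)/r < 1$) yields the recursion $M_{n+1} \leq M_n + C_2 M_n^{(r-1)/r}$; setting $a_n := M_n^{1/r}$ and using $(1+u)^{1/r} \leq 1 + u/r$ gives $a_{n+1} \leq a_n + C_2/r$, so $M_n \leq K(n+1)^r$ for a constant $K$ depending on $y_0, C_2, r$. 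Hölder's inequality applied with conjugate exponents $r/p$ and $r/(r-p)$ then gives
\[ y_0^p - x^p \leq \Exp[Y_n^p \1{T > n}] \leq M_n^{p/r} \Pr[T > n]^{1-p/r} \leq K^{p/r} (n+1)^p \Pr[T > n]^{1-p/r}, \]
which rearranges to $\Pr[T > n] \geq c_0 n^{-pr/(r-p)}$ for some $c_0 > 0$.

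The main obstacle is then passing from this crude polynomial bound to the sharp conclusion $\Exp[\sigma_x^q] = \infty$ for every $q > p$: direct layer-cake integration against $q n^{q-1}$ only gives the conclusion for the larger threshold $q \geq pr/(r-p)$. The sharpening is effected, as in~\cite[Theorem~1]{aim}, by a contradiction argument: if $\Exp[T^q] < \infty$ for some $q > p$, a refined Hölder/interpolation argument using~\eqref{eqn:Y-inc-lower-bound} (to control the lower tail of single-step decrements and rule out catastrophic undershoot of $x$) yields $L^1$-convergence $V_n \to V_T$; passing to the limit in the submartingale inequality would then force $\Exp V_T \geq y_0^p$, contradicting $V_T \leq x^p < y_0^p$. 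Since this final step is precisely the content of Theorem~1 and Corollary~1 of~\cite{aim}, the proof ultimately consists of verifying that the hypotheses of those results are captured by~\eqref{eqn:Y-inc-lower-bound}--\eqref{eqn:Yp-submart} and invoking them.
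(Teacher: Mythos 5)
Your intermediate computation is correct, and it is already more than the paper offers: the paper does not prove this lemma, but simply records that it is a reformulation of Theorem~1 and Corollary~1 of~\cite{aim} (with pointers to~\cite{mpew} and~\cite{mpow}). Two small points: the H\"older step with exponents $r/p$ and $r/(r-p)$, and the a~priori finiteness of $M_n := \Exp[Y_{n\wedge\sigma_x}^r]$, tacitly require $p<r$; this is satisfied in the paper's applications but should be recorded. The tail estimate $\Pr[\sigma_x>n]\geq c_0 n^{-pr/(r-p)}$ does follow from~\eqref{eqn:Yr-inc-bound} and~\eqref{eqn:Yp-submart}, and, as you observe yourself, it gives divergence of $\Exp[\sigma_x^q]$ only for $q\geq pr/(r-p)$, a strictly larger threshold than $p$.

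The genuine gap is in your final paragraph. The uniform-integrability route you sketch does not sharpen the exponent: from $\Exp[\sigma_x^q]<\infty$, Markov gives $\Pr[\sigma_x>n]=O(n^{-q})$, and H\"older against $M_n = O(n^r)$ gives $\Exp[Y_n^p\1{\sigma_x>n}] = O(n^{\,p-q(r-p)/r})$, which tends to zero (and so produces your contradiction with $\Exp[V_n]\geq y_0^p$) only when $q>pr/(r-p)$ — exactly the barrier you were trying to cross. Moreover, hypothesis~\eqref{eqn:Y-inc-lower-bound} bounds only the conditional \emph{mean} decrement and gives no control on tails of downward jumps, so it cannot ``rule out catastrophic undershoot''. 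The mechanism of~\cite{aim} that reaches the full range $q>p$ is of a different flavour: the submartingale~\eqref{eqn:Yp-submart} and the overshoot control coming from~\eqref{eqn:Yr-inc-bound} show that, before time $\sigma_x$, the process reaches level $y$ with probability at least of order $y^{-p}$; and then~\eqref{eqn:Y-inc-lower-bound} (this is its real role), again combined with~\eqref{eqn:Yr-inc-bound}, shows that from a level of order $y$ the return time to below $x$ exceeds a constant multiple of $y$ with probability bounded away from zero. Taking $y$ comparable to $n$ gives $\Pr[\sigma_x>n]\gtrsim n^{-p}$ and hence $\Exp[\sigma_x^q]=\infty$ for all $q\geq p$. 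Since the paper simply cites~\cite{aim}, falling back on that citation, as you ultimately do, is acceptable; but the sketched mechanism should be corrected, because as written it does not reach the stated conclusion.
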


\section{Lyapunov function calculations}
\label{sec:lyapunov}

\subsection{Preliminaries}
\label{sec:lyapunov-prelim}

For the case $\X \subseteq \RP$,
we use the Lyapunov function $f_0 : \R_+ \to \R_+$ defined for $\nu \in \R$ by
\[
f_0 (x) := f_0^\nu (x) := \begin{cases}
x^\nu & \text{for $x \geq 1$},\\
1 & \text{for $0 \leq x < 1$}.
\end{cases}
\]
The truncation at $1$ is only necessary for $\nu <0$, but for convenience we define $f_0$ as
above for all $\nu \in \R$. 
For processes on~$\R$ we will use two 
related, but different, extensions of $f_0$ to the whole of $\R$.
These are defined for $\nu \in \R$ as follows.
\[
f_1 (x) := f^\nu_1 (x) := \begin{cases}
{x}^\nu & \text{for $x \geq 1$},\\
1 & \text{for $x < 1$},
\end{cases}
\quad
\text{ and }
\quad
f_2 (x) := f^\nu_2 (x) := \begin{cases}
{|x|}^\nu & \text{for $|x| \geq 1$},\\
1 & \text{for $|x| < 1$}.
\end{cases}
\]
The `two-sided' function $f_2$
will be used to establish recurrence (with $\nu >0$) and transience ($\nu <0$);
the `one-sided' function $f_1$ will be used for 
distinguishing between directional ($\nu < 0$) and oscillatory ($\nu>0$) transience.
Define
\begin{equation}
\label{Di-def}
 D_i (x) :=  D^\nu_i (x) :=\Exp [ f_i (\xi_{n+1} ) - f_i (\xi_n ) \mid \xi_n = x] . \end{equation}
 
Our first estimate for the $D_i$ will be useful when the drift is dominant. In the calculations here and in the rest of the paper,
various constants $C<\infty$ will appear, whose precise value is not important, and may change from line to line.

\begin{lemma}
\label{lem:big-drift}
Suppose that either (i) $\X \subseteq \RP$, or (ii) $\X \subseteq \R$.
In either case, suppose that, for some $\alpha \in (1,2)$,
 $\limsup_{x \to + \infty} \Exp_x [ | \theta |^\alpha ] < \infty$.
Let $\eps >0$.
Then for any $\nu \in (-\eps, \alpha)$, the following asymptotics
hold with $i=0$ in case~(i) and with $i=1$ in case~(ii).
\[ D_i (x) =  \nu x^{\nu-1} \Exp_x[\theta] + O (x^{\nu - \alpha + \eps} ) , \text{ as } x \to +\infty  .\]
\end{lemma}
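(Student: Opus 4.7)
The plan is to estimate $D_i(x)$ for large $x$ by splitting the jump $\theta$ into a ``small'' part $\{ |\theta| \leq x/2 \}$ and a ``large'' part $\{ |\theta| > x/2 \}$, handling the former by Taylor expansion and the latter by the $\alpha$-moment hypothesis. For $x \geq 2$ the function $f_i$ restricted to the interval $[x/2, 3x/2]$ is simply $y \mapsto y^\nu$, which is smooth there, so on $\{|\theta| \leq x/2\}$ a two-term Taylor expansion of $f_i(x+\theta)$ around $x$ gives
\[
 f_i(x+\theta) - f_i(x) = \nu x^{\nu-1}\theta + R(x,\theta),
 \qquad
 |R(x,\theta)| \leq C_\nu x^{\nu-2} \theta^2,
\]
uniformly over $|\theta| \leq x/2$, for an explicit constant $C_\nu$. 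This works identically for $f_0$ and $f_1$ because the truncation below $1$ never activates on this event once $x \geq 2$.

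The key interpolation trick handles the heavy tails. Writing $M := \sup_{x \geq x_0} \Exp_x[|\theta|^\alpha] < \infty$, on $\{|\theta| \leq x/2\}$ we bound $\theta^2 \leq |\theta|^\alpha (x/2)^{2-\alpha}$, whence
\[
 \Exp_x \bigl[ |R(x,\theta)| \1{|\theta|\leq x/2} \bigr] \leq C_\nu x^{\nu-2} \cdot M (x/2)^{2-\alpha} = O(x^{\nu-\alpha}).
\]
Similarly, the contribution of the linear term outside $\{|\theta| \leq x/2\}$ is controlled via $|\theta| \leq |\theta|^\alpha (x/2)^{1-\alpha}$ on $\{|\theta| > x/2\}$, giving $|\nu x^{\nu-1} \Exp_x[\theta \1{|\theta| > x/2}]| = O(x^{\nu-\alpha})$. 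Adding these back, we recover the unrestricted linear term $\nu x^{\nu-1}\Exp_x[\theta]$ up to an error $O(x^{\nu-\alpha})$.

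For the large-jump contribution to $D_i$, on $\{|\theta| > x/2\}$ we use crude pointwise bounds: for $\nu \in [0,\alpha)$, $|f_i(x+\theta)| \leq 1 + (x+|\theta|)^\nu \leq 1 + (3|\theta|)^\nu$, while for $\nu \in (-\eps, 0)$, $|f_i(x+\theta)| \leq 1$ and $f_i(x) = x^\nu \leq 1$. In either case,
\[
 \Exp_x \bigl[ |f_i(x+\theta) - f_i(x)| \1{|\theta| > x/2} \bigr] \leq C \Exp_x \bigl[ (1+|\theta|^{\nu \vee 0}) \1{|\theta| > x/2} \bigr] = O(x^{(\nu\vee 0) - \alpha}),
\]
again by the interpolation $|\theta|^{\nu} \leq |\theta|^\alpha (x/2)^{\nu-\alpha}$ (valid since $\nu < \alpha$) and Markov's inequality $\Pr_x[|\theta| > x/2] = O(x^{-\alpha})$. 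Since $\nu \vee 0 \leq \nu + \eps$ for $\nu > -\eps$, this bound is $O(x^{\nu-\alpha+\eps})$; this is exactly where the loss $\eps$ in the statement enters, to accommodate the case $\nu \in (-\eps, 0)$ where the function value at the origin side is $1 \gg x^\nu$.

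Combining the three estimates yields $D_i(x) = \nu x^{\nu-1}\Exp_x[\theta] + O(x^{\nu-\alpha+\eps})$, as required. The main obstacle is purely bookkeeping: ensuring the interpolation argument is applied with the correct exponent in each of the sub-cases (sign of $\nu$, side of the truncation), and that no dependence on the distribution of $\theta$ beyond the uniform $\alpha$-moment bound creeps into the implicit constants.
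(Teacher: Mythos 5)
Your proof is correct. It follows the same overall strategy as the paper's --- truncate the jump, Taylor-expand $f_i$ on the small-jump event, and control everything else through the uniform $\alpha$-moment bound via the interpolation $|\theta|^r \le |\theta|^\alpha \cdot (\text{cutoff})^{r-\alpha}$ --- but with one genuine simplification: you truncate at the fixed level $x/2$, whereas the paper truncates at $x^\gamma$ and must tune $\gamma\in(0,1)$ to satisfy three separate constraints (one for the Taylor remainder, one for the large-jump term, one for restoring the linear term). Your choice makes the large-jump event coincide with the region where $x+|\theta|\le 3|\theta|$, so the crude pointwise bound $|f_i(x+\theta)|\le 1+(3|\theta|)^\nu$ applies directly and no auxiliary exponent $\nu/\gamma$ appears; it also localises the $\eps$-loss precisely to the case $\nu\in(-\eps,0)$, where $\Pr_x[|\theta|>x/2]=O(x^{-\alpha})=O(x^{\nu-\alpha+\eps})$ is all one can say, and shows that for $\nu\ge 0$ the error is in fact the sharper $O(x^{\nu-\alpha})$. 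The paper's parametrised cutoff buys nothing extra here, so your version is a clean streamlining of the same argument; the only point worth making explicit is that on $\{|\theta|>x/2\}$ the subtracted term $f_i(x)=x^\nu$ is itself dominated by $(2|\theta|)^\nu$ (for $\nu>0$), which is why your displayed bound absorbs it.
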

\begin{proof}
Suppose that either~(i) or (ii) holds, and take $i = 0$ or $i=1$ respectively. Let  $\gamma \in (0,1)$, to be specified later.
By assumption,  $\Exp_x [ | \theta |^\alpha ] \leq C$ for constant $C<\infty$
and all $x$ large enough; suppose that $x \geq 1$ is such an $x$.
For $\nu <0$, $f_i (x) \in [0,1]$ for all $x$ (in $\RP$ or $\R$, as appropriate),
while for $\nu >0$, $f_i(x)$ is non-decreasing.
Moreover, for all $x \geq 1$,
$\theta \geq x^\gamma$ implies that $x + \theta  \leq 2 \theta^{1/\gamma}$. These facts imply the bounds
 \[ \Exp_x [ | f_i (x +\theta) - f_i (x) | \1 { |\theta| \geq x ^\gamma } ]
\leq \begin{cases}
C \Exp_x [  |\theta|^{\nu/\gamma} \1 { |\theta| \geq x ^\gamma } ]
& \text{for } \nu > 0,\\
\Pr_x [ |\theta| \geq x ^\gamma ]& \text{for } \nu < 0.
\end{cases}
\]
Fix $\eps >0$ and $\nu \in (-\eps, \alpha)$.
For $\nu < 0$, Markov's inequality with the moments assumption yields $\Pr_x [| \theta| \geq x ^\gamma ]
= O ( x^{-\alpha \gamma} ) = O (x^{\nu-\alpha+\eps})$,
provided that we take $\gamma > 1 -\frac{\nu+\eps}{\alpha}$,
which we may since $\nu +\eps > 0$. 
If $\nu >0$,
take $\gamma > \frac{\nu}{\alpha}$ so that
\[ 
\Exp_x [  |\theta|^{\nu/\gamma} \1 { |\theta| \geq x ^\gamma } ]
\leq x^{\nu -\alpha\gamma} 
\Exp_x [  |\theta|^{\alpha} ]
= O (x^{\nu-\alpha+\eps}) ,\]
provided that $\gamma > 1 - \frac{\eps}{\alpha}$.
Thus in either case we have
\[ \Exp_x [ | f_i (x +\theta) - f_i (x) | \1 { |\theta| \geq x ^\gamma } ]
= O (x^{\nu-\alpha+\eps}) \]
for a suitable $\gamma \in (0,1)$. On the other hand,
for all $x$ sufficiently large,
\begin{align*}
\Exp_x \left[ ( f_i (x+\theta) - f_i (x) ) \1 { | \theta | < x^\gamma } \right] & = x^\nu \Exp_x \left[ \left( \left(1 + x^{-1} \theta \right)^\nu - 1 \right) \1 { | \theta | < x^\gamma } \right].
\end{align*}
The Taylor expansion
$(1+z)^\nu = 1+\nu z (1 + \phi z)^{\nu -1}$, valid for $z >-1$
and where $\phi = \phi(z) \in [0,1]$,
implies that,
for all $x$ sufficiently large,
\begin{align*}
\left| \Exp_x \left[ ( f_i (x+\theta) - f_i (x) ) \1 { | \theta | < x^\gamma } \right] - \nu x^{\nu - 1} \Exp_x [ \theta  \1 { | \theta | < x^\gamma } ] \right|
\leq C  x^{\nu-2+(2-\alpha)\gamma} \Exp_x [ |\theta|^\alpha ], \end{align*}
which is $o( x^{\nu -\alpha} )$ since $\gamma < 1$.
Noting that $\Exp_x [ | \theta|  \1 { | \theta | \geq x^\gamma } ] \leq x^{\gamma -\alpha\gamma} 
\Exp_x [  |\theta|^{\alpha} ]$ and that
$\gamma -\alpha\gamma < 1 -\alpha +\eps$ 
provided that $\gamma > 1 - \frac{\eps}{\alpha-1}$, the result follows. 
\end{proof}

\subsection{Lyapunov function on the half line}

Lemma~\ref{lem:big-drift} is only useful for large drift;
otherwise, we must use the tail assumptions on the increments
to evaluate more precisely the other contributions to $D_i$.
First we consider $\X \subseteq \RP$; the calculations in this setting
will be a model for the other cases. Set
\begin{equation}
\label{eq:kappa-0}
 \kappa_0 (\nu ) := \kappa_0 (\alpha, \nu) := (1-\nu) \frac{\Gamma (\alpha - \nu) \Gamma (1-\alpha)}{\Gamma (2-\nu) } .\end{equation}
Note that  $\nu \mapsto \kappa_0 (\nu)$ is continuous on $(-\infty, \alpha)$.

\begin{lemma}
\label{lem:lyapunov-f0}
Suppose that the random walk $\Xi$ on $\RP$~satisfies~\eqref{ass:tails-on-RP}.
Then, for any $\nu$ for which $\alpha - \beta< \nu < \alpha$, as $x \to \infty$,
\[
D_0 (x) = \nu x^{\nu-1} \Exp_x[\theta] +
c \nu x^{\nu-\alpha} \kappa_0 (\nu) 
 + o(x^{\nu-\alpha}).
\]
\end{lemma}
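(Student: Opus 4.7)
The plan is to show that $D_0(x)$ decomposes as the drift contribution $\nu x^{\nu-1}\Exp_x[\theta]$ plus a remainder whose asymptotics are dictated by the heavy right tail of $\theta$. Writing
\[ \tilde R(x,\theta) := f_0(x+\theta) - f_0(x) - \nu x^{\nu-1}\theta , \]
so $D_0(x) = \nu x^{\nu-1} \Exp_x[\theta] + \Exp_x[\tilde R(x,\theta)]$, the task reduces to proving $\Exp_x[\tilde R(x,\theta)] = c\nu \kappa_0(\nu) x^{\nu-\alpha} + o(x^{\nu-\alpha})$. On the event $\{x+\theta \geq 1\}$ this $\tilde R$ coincides with the pure-power Taylor remainder $R(x,\theta) := (x+\theta)^\nu - x^\nu - \nu x^{\nu-1}\theta$. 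The complementary event $\{\theta < 1-x\}$ has probability $O(x^{-\beta})$ by Markov's inequality on $\theta_-^\beta$, and on it the crude bound $|\tilde R(x,\theta)|\leq C(1 + x^{\max(\nu,0)} + x^{\nu-1}|\theta|)$ together with tail moment control on $\Exp_x[|\theta|\1{\theta < 1-x}]$ forces this regime to contribute $O(x^{\max(\nu,0) - \beta})$, which is $o(x^{\nu-\alpha})$ exactly because $\nu > \alpha - \beta$.

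Within the remaining event, I would split at $|\theta| = x^\gamma$ for some $\gamma \in (0,1)$ chosen close to $1$. On $\{|\theta| \leq x^\gamma\}$, Taylor's theorem with integral remainder yields $|R(x,\theta)| \leq C x^{\nu-2}\theta^2$, and a truncated second-moment estimate $\Exp_x[\theta^2 \1{|\theta| \leq x^\gamma}] = O(x^{\gamma(2-\alpha)})$ (from tail-integration under~\eqref{ass:tails-on-RP} together with the bounded $\beta$-moment of $\theta_-$) gives a contribution $O(x^{\nu-2+\gamma(2-\alpha)}) = o(x^{\nu-\alpha})$ for any $\gamma < 1$. On the left-tail slice $\{1-x \leq \theta < -x^\gamma\}$, the crude bound $|R| \leq Cx^{\max(\nu,0)} + Cx^{\nu-1}|\theta|$ combined with $\Pr_x[\theta < -x^\gamma] = O(x^{-\gamma\beta})$ and $\Exp_x[|\theta|\1{\theta<-x^\gamma}] = O(x^{\gamma(1-\beta)})$ absorbs this slice into the $o(x^{\nu-\alpha})$ error once $\gamma$ is taken larger than both $(\alpha - \min(\nu,0))/\beta$ and $(\alpha-1)/(\beta-1)$, quantities strictly less than~$1$ under the hypotheses.

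The essential contribution comes from the right tail $\{\theta > x^\gamma\}$. I would integrate by parts against the survival function $\bar F(y) := \Pr_x[\theta > y]$, reducing the expectation to
\[ R(x,x^\gamma)\bar F(x^\gamma) + \nu \int_{x^\gamma}^\infty \left[(x+y)^{\nu-1} - x^{\nu-1}\right]\bar F(y)\,dy , \]
the boundary term being $o(x^{\nu-\alpha})$ by the earlier Taylor and tail estimates. Substituting $y = xu$ rewrites the integral as $\nu x^\nu \int_{x^{\gamma-1}}^\infty [(1+u)^{\nu-1}-1]\bar F(xu)\,du$, and I would then invoke the uniform tail approximation $\bar F(xu) = (c+o(1))(xu)^{-\alpha}$ valid once $xu$ exceeds a large fixed threshold~$Y$, splitting the $u$-integral at $Y/x$: on the small-$u$ piece use $\bar F \leq 1$ to obtain a negligible bound, and on the tail piece apply dominated convergence with dominating function $C[(1+u)^{\nu-1}-1]u^{-\alpha}$, which is integrable on $(0,\infty)$ precisely when $\alpha \in (1,2)$ and $\nu < \alpha$. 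This produces the leading term $c\nu\kappa_0(\nu)x^{\nu-\alpha}$ once one evaluates
\[ \int_0^\infty [(1+u)^{\nu-1} - 1] u^{-\alpha}\,du = \frac{(1-\nu)\Gamma(1-\alpha)\Gamma(\alpha-\nu)}{\Gamma(2-\nu)} \]
by one further integration by parts and the Beta identity $\int_0^\infty u^{1-\alpha}(1+u)^{\nu-2}du = \Gamma(2-\alpha)\Gamma(\alpha-\nu)/\Gamma(2-\nu)$. I expect the main technical obstacle to be the coordinated bookkeeping around the three thresholds $1-x$, $x^\gamma$, and $Y/x$, each discharging a different error source; the two-sided exponent constraint $\alpha-\beta < \nu < \alpha$ in the statement arises naturally, with the upper bound needed for integrability in the Beta integral and for the boundary term to vanish, and the lower bound needed exactly for the truncation correction near $f_0 \equiv 1$ to be subsumed into the error.
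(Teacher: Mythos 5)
Your proposal is correct and follows the paper's own strategy closely: both isolate the drift term $\nu x^{\nu-1}\Exp_x[\theta]$, control moderate increments via Taylor's theorem, extract the heavy right-tail contribution by integrating against the survival function (Lemma~\ref{lem:exp-tail}) and evaluating the resulting Beta integral (Lemma~\ref{lem:int-positive-part}), and discard the negative-jump regimes by Markov's inequality on $\theta_-^\beta$, with the lower restriction $\nu>\alpha-\beta$ entering exactly where you flag it. The only substantive difference is bookkeeping in the truncation: you split at a single threshold $|\theta|=x^\gamma$ with $\gamma$ close to $1$, which obliges you to use the truncated moment estimate $\Exp_x[\theta^2\1{|\theta|\le x^\gamma}]=O(x^{\gamma(2-\alpha)})$ (and makes your auxiliary split at $u=Y/x$ vacuous for large $x$, since then $x^{\gamma-1}>Y/x$), whereas the paper takes $\theta_+\leq x^\eps$ with $\eps<\tfrac{2-\alpha}{2}$ small, so the crude pointwise bound $\theta_+^2\leq x^{2\eps}$ suffices on the small-jump event, and separately truncates $\theta_-$ at $x^{1-\eps}$; either scheme closes the argument.
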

\begin{proof}
The case $\nu =0$ is trivially true, since $f_0 (\xi_{n+1})- f_0 (\xi_n)$
is then identically zero.  For $\nu \neq 0$,   the fact that $g(\theta) =
g(\theta_+)+g(-\theta_-)$ for any function with $g(0)=0$ yields
\begin{equation}\label{eqn:posneg}
D_0 (x)
= \Exp_x[f_0 (x+\theta_+) - f_0 (x)] + \Exp_x[f_0 (x-\theta_-) - f_0 (x)].
\end{equation}
In the case $\nu = 1$, for $x \geq 1$ we can write \eqref{eqn:posneg}
as
\begin{align*}
D_0 (x) & = \Exp_x[\theta_+] - \Exp_x[
\theta_-\1{\theta_- \leq x-1}  + (1-x) \1 { \theta_- > x-1 } ]\\
& = \Exp_x[\theta] + \Exp_x[(\theta_--(x-1))\1{\theta_->x-1}],
\end{align*}
and therefore
\[
0 \leq D_0 (x)  - \Exp_x[\theta] \leq \Exp_x[\theta_-\1{\theta_->x-1}].
\]
For $x > x_0$,
from the $\beta$-moments bound in~\eqref{ass:tails-on-RP} and the fact that $\beta > \alpha > 1$, we get
\[\Exp_x[\theta_-\1{\theta_->x-1}] \leq (x-1)^{1-\beta}\Exp[\theta_-^\beta\1{\theta_->x-1}]
\leq C(x-1)^{1-\beta},
\]
and therefore $D_0 (x) =
\Exp_x[\theta] + o(x^{1-\alpha})$, as claimed.
 
Now suppose that $\alpha - \beta < \nu < \alpha$ with $\nu \notin \{ 0,1\}$. 
For any $x \geq 1$,
we can write $\Exp_x[ f_0 (x+\theta_+) - f_0 (x)] = x^\nu\Exp_x[A_1+A_2+A_3]$, where we define the random variables
\begin{align*}
A_1 &= ((1+\theta_+/x)^\nu - 1-\nu\theta_+/x)\1{\theta_+ \leq x^\eps},\\
A_2 &= ((1+\theta_+/x)^\nu - 1 - \nu\theta_+/x)\1{\theta_+ > x^\eps},\\
A_3 &= \nu\theta_+/x.
\end{align*}
where $\eps \in (0, \frac{2-\alpha}{2})$ is fixed.

For $\Exp_x [ A_1]$, we use the Taylor expansion
$(1+z)^\nu = 1+\nu z +\frac12 \nu(\nu-1) z^2 (1+\phi z)^{\nu-2}$ for some $\phi= \phi(z) \in [0,1]$, valid
for $z>-1$, and the fact that $\nu<\alpha<2$ to write
\begin{equation}\label{eqn:Taylor-nu}
| (1+z)^\nu - 1- \nu z | \leq 
\begin{cases}
\frac12|\nu(\nu-1)| z^2 & \text{for $z\geq 0$},\\
\frac12|\nu(\nu-1)| z^2(1+z)^{\nu-2} &\text{for $-1<z<0$}.
\end{cases}
\end{equation}
Hence
\[
|A_1| \leq \frac{|\nu(\nu-1)|\theta_+^2}{2x^2}\1{\theta_+ \leq x^\eps}
\]
and therefore $\Exp_x[A_1] = O(x^{2\eps -2}) = o(x^{-\alpha})$, since $\eps < \frac{2-\alpha}{2}$.

We now show that 
\begin{equation}
\label{eq:A2}
\Exp_x[A_2] = c\nu x^{-\alpha} \int_0^\infty ((1+u)^{\nu-1}-1)u^{-\alpha}\ud u + o(x^{-\alpha});
\end{equation}
the integral here being finite by Lemma~\ref{lem:int-positive-part} (with $p=1-\alpha, q=\nu$).
To get~\eqref{eq:A2}, define $g_x : \RP \to \R$ by $g_x(y) = (1+y/x)^\nu-1-\nu y/x$, which is differentiable
with derivative $g'_x(y) = (\nu/x)((1+y/x)^{\nu-1}-1)$.  Since $g'(y) > 0$ for all $y >0$
if $\nu < 0$ or $\nu >1$,
 and $g'(y) < 0$ for all $y > 0$ if $\nu \in (0,1)$,  
$g_x$ is monotonic, and applying Lemma~\ref{lem:exp-tail} we obtain
\begin{equation}
\label{eq:A2a}
\Exp_x[A_2] = g_x(x^\eps)\Pr_x[\theta_+ > x^\eps] + \int_{x^\eps}^\infty g'_x(y)
\Pr_x[\theta_+ > y]\ud y,
\end{equation}
the integral  being finite for any $x > x_0$, since 
$g'_x$ is continuous and finite over $[x^\eps,\infty)$,
and $g'_x (y) = O (y^{\nu -1 })$ as $y \to \infty$,
so, by the $\alpha$-tail assumption in~\eqref{ass:tails-on-RP}, the integrand decays like
$y^{\nu-\alpha-1}$.
Another Taylor's theorem calculation  
shows   $g_x(x^\eps) = O (x^{2\eps -2} ) = o(x^{-\alpha})$.

We now consider the $x \to \infty$~asymptotics of the integral in~\eqref{eq:A2a}.
First note that, 
\begin{align*}
\left| \int_{x^\eps}^\infty g'_x(y) \Pr_x[\theta_+ > y]\ud y - \int_{x^\eps}^\infty g'_x(y)
  cy^{-\alpha}\ud y \right|
	& \leq \int_{x^\eps}^\infty | g_x' (y) | \left| y^\alpha \Pr_x[\theta_+ > y]\ud y - c \right| y^{-\alpha} \ud y .
	\end{align*}
Then by assumption~\eqref{ass:tails-on-RP}, for any $\delta >0$ we can find $x_1 \in \RP$ such that, for all $x \geq x_1$,
\[ \int_{x^\eps}^\infty | g_x' (y) | \left| y^\alpha \Pr_x[\theta_+ > y]\ud y - c \right| y^{-\alpha} \ud y 
\leq \delta \int_{x^\eps}^\infty | g_x' (y) |  y^{-\alpha} \ud y
\leq \delta \left| \int_{0}^\infty  g_x' (y)   y^{-\alpha} \ud y \right|,
 \]
because $g_x'(y)$ never changes sign.
Also, by Taylor's theorem again, 
\[
\biggl|\int_0^{x^\eps} g'_x(y) y^{-\alpha} \ud y \biggr| = \int_0^{x^\eps} |g'_x(y)| y^{-\alpha} \ud y \leq 
C x^{-2} \int_0^{x^\eps} |\nu(\nu-1)|y^{1-\alpha}\ud y = o(x^{-\alpha})
\]
since  $\alpha < 2$ and $\eps < 1$.
Therefore, 
\[
\left|\int_{x^\eps}^\infty g'_x(y) \Pr[\theta_+ > y] \ud y - \int_0^\infty g'_x(y) c y^{-\alpha}
  \ud y \right| \leq \delta \left| \int_0^\infty g'_x(y)y^{-\alpha}\ud y \right| +  o(x^{-\alpha}) ,
\]
and since $\delta > 0$ was arbitrary, we have
\[ 
\int_{x^\eps}^\infty g'_x(y) \Pr[\theta_+ > y] \ud y  = (c+o(1))\int_0^\infty g'_x(y) y^{-\alpha}
  \ud y + o(x^{-\alpha}) ,
\]
which, after the change of variable $y=ux$, and using~\eqref{eq:A2a},
 yields~\eqref{eq:A2} .

Thus from the fact that $\Exp_x[ f_0 (x+\theta_+) - f_0 (x)] = x^\nu\Exp_x[A_1+A_2+A_3]$, we obtain
\begin{equation}\label{eqn:thetapos}
\Exp_x[f_0 (x+\theta_+)-f_0 (x)] = c \nu x^{\nu-\alpha} \int_0^\infty
((1+u)^{\nu-1} - 1)u^{-\alpha} \ud u + \nu x^{\nu-1} \Exp_x[\theta_+] + o(x^{\nu-\alpha}).
\end{equation}

We now consider $\Exp_x[f_0(x-\theta_-)-f_0(x)]$ in~\eqref{eqn:posneg}. Fix (another) $\eps \in (0,1)$
with $\eps < \frac{\nu+\beta-\alpha}{\beta}$ and $\eps < \frac{\beta-\alpha}{\beta}$;
this choice is possible since $\alpha < \beta$ and $\nu > \alpha - \beta$. Note that, for $\nu <
0$, the Lyapunov function satisfies $f_0(x) \in [0,1]$ for all $x\in \RP$, so that, for $x > x_0$,
\begin{equation}
\label{eq:f0-big-neg}
\left| \Exp_x[(f_0(x-\theta_-)-f_0(x))\1{\theta_- > x^{1-\eps}}] \right| \leq \Pr_x[\theta_- > x^{1-\eps}]
\leq Cx^{-\beta(1-\eps)} = o(x^{\nu-\alpha}),
\end{equation}
by Markov's inequality, the $\beta$-moments assumption in~\eqref{ass:tails-on-RP}, and the choice of $\eps$. On the other hand, if
$\nu >0$, then $f_0$ is non-decreasing and, for $x > x_0$,
\[
\left|\Exp_x[(f_0(x-\theta_-)-f_0(x))\1{\theta_- > x^{1-\eps}}] \right| \leq x^\nu \Pr_x[\theta_- > x^{1-\eps}]
\leq Cx^{\nu-\beta(1-\eps)} = o(x^{\nu-\alpha}),
\]
again, by choice of $\eps$.
It remains to consider the random variable
\[
(f_0(x-\theta_-)-f_0(x))\1{\theta_- \leq x^{1-\eps}} = x^\nu((1-\theta_-/x)^\nu - 1 )\1{\theta_- \leq x^{1-\eps}},
\]
for all $x$ sufficiently large.
Using~\eqref{eqn:Taylor-nu} for $z = -\theta_-/x$, we obtain
\[
\big|(1-\theta_-/x)^\nu - 1 + \nu\theta_-/x \big|\1{\theta_- \leq x^{1-\eps}} \leq \frac{|\nu(\nu-1)|\theta_-^2}{2x^2}(1 + O(x^{-\eps}))\1{\theta_- \leq x^{1-\eps}}.
\]
Assumption~\eqref{ass:tails-on-RP} means that there exists $\beta' \in
(\alpha,2)$ with $\limsup_{x \to \infty} \Exp_x[\theta_-^{\beta'}] < \infty$, so that $\Exp_x[\theta_-^2\1{\theta_- \leq x^{1-\eps}}] \leq
x^{(2-\beta')(1-\eps)}\Exp_x[\theta_-^{\beta'}] = o(x^{2-\alpha})$, by choice of $\beta'$.
 Therefore
\begin{equation}\label{eqn:thetaneg}
\Exp_x[f_0(x-\theta_-)-f_0(x)] = - \nu x^{\nu-1}
\Exp_x[\theta_-\1{\theta_-\leq x^{1-\eps}}] + o(x^{\nu-\alpha}).
\end{equation}
Also, assumption~\eqref{ass:tails-on-RP} yields $\Exp_x[\theta_-
\1{\theta_->x^{1-\eps}}]  \leq x^{(1-\beta)(1-\eps)} \Exp_x [  \theta_-^\beta ]  = o(x^{1-\alpha})$,
since $\eps < \frac{\beta-\alpha}{\beta} < \frac{\beta-\alpha}{\beta-1}$.
Thus combining~\eqref{eqn:thetapos} and~\eqref{eqn:thetaneg},  we get
\[
D_0 (x) = \nu x^{\nu-1} \Exp_x[\theta] +  c \nu x^{\nu-\alpha} \int_0^\infty
((1+u)^{\nu-1} - 1)u^{-\alpha} \ud u  + o(x^{\nu-\alpha}).
\]
Finally, using Lemma~\ref{lem:int-positive-part} with $p=1-\alpha$ and $q=\nu$, the integral $\int_0^\infty
((1+u)^{\nu-1} - 1)u^{-\alpha} \ud u$ is equal to $(1-\nu)\Gamma(\alpha - \nu)\Gamma(1-\alpha)/\Gamma(2-\nu)$
and the result follows.
\end{proof}

\subsection{Lyapunov functions on the whole line}
 
In the case of heavier outwards tails,
the computations for $f_1$ and $f_2$ are naturally related to
those that we did for $f_0$ in the last section,
so we can reuse many calculations here. 

\begin{lemma}
\label{lem:lyapunov-R-out}
Suppose that the random walk $\Xi$ on $\R$~satisfies~\eqref{ass:tails-on-R-out}.
Then, for any $\nu$ for which $\alpha - \beta< \nu < \alpha$, the following hold.
First, as $x \to +\infty$,
\[
D_1 (x)
= \nu x^{\nu-1} \Exp_x[\theta] +
c \nu x^{\nu-\alpha} \kappa_0 (\nu)
+ o(x^{\nu-\alpha}),
\]
where $\kappa_0$ is defined at~\eqref{eq:kappa-0}.
Second,  as $x \to \pm \infty$,
\begin{equation}
\label{eq:f2-increment}
D_2 (x) =
 \nu \sign(x) |x|^{\nu-1} \Exp_x[\theta] +
c \nu |x|^{\nu-\alpha} \kappa_0 (\nu)
+ o(|x|^{\nu-\alpha}).
\end{equation}
\end{lemma}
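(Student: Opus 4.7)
The plan is to bootstrap from Lemma~\ref{lem:lyapunov-f0}: on the positive side, $f_1$ and $f_0$ agree on $\RP$, and $f_2$ differs from $f_1$ only in a tail region that is controlled by the $\beta$-moment hypothesis in~\eqref{ass:tails-on-R-out}. On the negative side, the symmetry $f_2(x)=f_2(-x)$ combined with the mirrored half of~\eqref{ass:tails-on-R-out} reduces the computation to the positive-side one for a reflected walk.

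First I would treat $D_1$ as $x\to+\infty$. Since $f_1(y)=f_0(y)$ for all $y\ge 0$ and $f_1(y)=1=f_0(0)$ for $y<0$, we have $f_1(x+\theta)=f_0((x+\theta)_+)$ whenever $x+\theta<1$, which matches exactly the truncation used in the proof of Lemma~\ref{lem:lyapunov-f0}. The positive-side half of~\eqref{ass:tails-on-R-out}, restricted to $x\ge x_0$, gives precisely the $\alpha$-tail for $\theta_+$ and the $\beta$-moment bound on $\theta_-$ that were used there. Therefore the proof of Lemma~\ref{lem:lyapunov-f0} applies verbatim and yields the stated asymptotics for $D_1(x)$ as $x\to+\infty$.

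Next I would show $D_2(x)=D_1(x)+o(x^{\nu-\alpha})$ as $x\to+\infty$. The functions $f_1$ and $f_2$ coincide on $[-1,\infty)$, so they differ at $x+\theta$ only on the event $\{\theta_->x+1\}$, where $f_2(x+\theta)=(\theta_--x)^\nu$ while $f_1(x+\theta)=1$. On this event one has the elementary bounds
\[
|f_2(x+\theta)-f_1(x+\theta)| \le
\begin{cases}
\theta_-^\nu+1 & \text{if }\nu\ge 0,\\
2 & \text{if }\nu<0.
\end{cases}
\]
Taking expectations and applying Markov's inequality (using that $\sup_{x\ge x_0}\Exp_x[\theta_-^\beta]<\infty$ from~\eqref{ass:tails-on-R-out}) gives a bound of order $x^{\nu-\beta}$ (for $\nu\ge 0$) or $x^{-\beta}$ (for $\nu<0$), both of which are $o(x^{\nu-\alpha})$ precisely because $\nu>\alpha-\beta$. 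This is the main technical point, and it is the reason for the stated range of $\nu$.

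Finally, for $x\to-\infty$ I use the symmetry $f_2(x)=f_2(-x)$. Writing $x'=-x>0$ and $\tilde\theta=-\theta$, one has $D_2(x)=\Exp_x[f_2(x'+\tilde\theta)-f_2(x')]$, and the mirrored half of~\eqref{ass:tails-on-R-out} asserts that for $x\le -x_0$, the law of $\tilde\theta$ satisfies the same tail condition on $\tilde\theta_+=\theta_-$ (with the same constants $c,\alpha$) and the same $\beta$-moment condition on $\tilde\theta_-=\theta_+$. Applying the positive-side result just established to the reflected process yields
\[
D_2(x) = \nu(x')^{\nu-1}\Exp_x[\tilde\theta] + c\nu(x')^{\nu-\alpha}\kappa_0(\nu)+o((x')^{\nu-\alpha}),
\]
and substituting $x'=|x|$ and $\Exp_x[\tilde\theta]=-\Exp_x[\theta]=\sign(x)\Exp_x[\theta]$ gives~\eqref{eq:f2-increment}. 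The only substantive obstacle is the comparison step between $f_1$ and $f_2$; everything else is bookkeeping transfer from Lemma~\ref{lem:lyapunov-f0} and an appeal to symmetry.
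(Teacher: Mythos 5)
Your argument is correct and shares the paper's overall architecture (bootstrap the $f_1$ statement verbatim from Lemma~\ref{lem:lyapunov-f0}, then reduce the $x\to-\infty$ case of $f_2$ to the $x\to+\infty$ case via the symmetry $f_2(-x)=f_2(x)$ and the mirrored half of~\eqref{ass:tails-on-R-out}), but you handle the one genuinely new estimate differently. The paper re-derives the negative-increment contribution for $f_2$ directly: it shows that the analogue of~\eqref{eqn:thetaneg} holds by splitting the event $\{\theta_->x^{1-\eps}\}$ into $\{x^{1-\eps}<\theta_-\leq 2x\}$ (where $f_2(x-\theta_-)\leq f_2(x)$, so the tail probability bound suffices) and $\{\theta_->2x\}$ (where $f_2(x-\theta_-)-f_2(x)\leq\theta_-^\nu$). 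You instead write $D_2(x)-D_1(x)=\Exp_x[(f_2(x+\theta)-f_1(x+\theta))\1{\theta_->x+1}]$, exploiting that $f_1$ and $f_2$ agree on $[-1,\infty)$, and bound the single remaining term by $\Exp_x[(\theta_-^\nu+1)\1{\theta_->x+1}]=O(x^{\nu-\beta})$ for $\nu\geq 0$ and by $2\Pr_x[\theta_->x+1]=O(x^{-\beta})$ for $\nu<0$; both are $o(x^{\nu-\alpha})$ (for $\nu<0$ this is exactly where $\nu>\alpha-\beta$ is needed, as you note; for $\nu\geq 0$ it is simply $\beta>\alpha$). Your comparison is slightly cleaner in that it reuses the already-proved $D_1$ asymptotics wholesale and isolates the difference to a single tail event, at the cost of having to know the $f_1$ result first; the paper's route is self-contained for $f_2$ and runs both computations in parallel. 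Both reduce, in the end, to Markov's inequality with the uniform $\beta$-moment bound, and your estimates are all valid on the stated range of $\nu$.
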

\begin{proof}
Suppose that $x \geq 1$. Here the relevant part of assumption~\eqref{ass:tails-on-R-out}
coincides with the assumption~\eqref{ass:tails-on-RP} used in the proof of Lemma~\ref{lem:lyapunov-f0}.
Moreover, since $f_0(x) = f_1(x) = f_2(x)$ for all $x \geq 0$ (and a fixed $\nu$),
conditional on $\xi_n = x$ it is clearly the case that
 $f_i ( x + \theta_+ )$ and $f_i (x-\theta_-) \1 { \theta_- \leq x }$
do not depend on which $i \in \{0,1,2\}$ we are using. Thus the only
difference from our computation in Lemma~\ref{lem:lyapunov-f0}
arises from the possibility now that $\theta_- > x$
(which was previously precluded).

In the places in the proof of Lemma~\ref{lem:lyapunov-f0}
where $\theta_-$ is allowed to be big,
we used only that (i) $f_0 (x) = 1$ for $x \leq 1$, (ii) $f_0 (x) \in [0,1]$ for all $x \in \RP$ if $\nu < 0$,
and (iii) $f_0(x)$ is non-decreasing for $x \in \RP$ if $\nu > 0$. All of (i)--(iii) extend to $x \in \R$ with $f_1$ in place of $f_0$. Thus the proof of the result for $f_1$
follows verbatim that of Lemma~\ref{lem:lyapunov-f0},   replacing $f_0$ by $f_1$,
and  noting that some statements should be extended from $\RP$ to $\R$.

Consider $f_2$. 
Suppose that we can show that~\eqref{eq:f2-increment}
holds for $x \to +\infty$.
If assumption~\eqref{ass:tails-on-R-out}
holds for $\xi_n$, it also holds for $-\xi_n$.
 Then by the symmetry $f_2 (-x ) \equiv f_2 (x)$, we may apply~\eqref{eq:f2-increment} to the process $-\xi_n$,
to get that
$D_2 (-x) = 
\Exp [ f_2 ( - \xi_{n+1} ) - f_2 (- \xi_n ) \mid -\xi_n = x]$
is, as $x \to +\infty$, equal to the right-hand side of~\eqref{eq:f2-increment}
but with $\Exp_x [ \theta ]$ replaced by
$-\Exp_{-x} [ \theta]$. This shows that~\eqref{eq:f2-increment} also holds for
$x \to -\infty$. Thus it suffices to prove~\eqref{eq:f2-increment} for $x \to +\infty$; so
 we take $x \geq 1$, as in the first paragraph of this proof.

	We describe how to modify the proof of Lemma~\ref{lem:lyapunov-f0} to obtain~\eqref{eq:f2-increment}.
 For $\nu=1$, the analogue of~\eqref{eqn:posneg} is
\begin{align*}
D_2 (x)  
& = \Exp_x [ \theta_+ ] - \Exp_x [ \theta_-\1{\theta_-\leq x-1} ]
+ \Exp_x [ (1-x) \1 {  x-1<\theta_-\leq x+1 } ] \\
& {} \quad 
+ \Exp_x [ (\theta_- - 2x) \1 { \theta_- > x +1 } ] \\
& = \Exp_x[\theta] + \Exp_x[(\theta_--(x-1))\1{\theta_->x-1}] + \Exp_x[(\theta_--(x+1))\1{\theta_->x+1}]  .
\end{align*}
It follows that $0 \leq D_2 (x) - \Exp_x[\theta] \leq 2\Exp_x[\theta_-\1{\theta_->x-1}]$,
and so $D_2(x) = \Exp_x[\theta] +o(x^{1-\alpha})$ as before.

Now suppose that $\alpha-\beta< \nu< \alpha$ with $\nu \notin \{ 0,1 \}$.  Following the proof of
Lemma~\ref{lem:lyapunov-f0} exactly, we can show that equation~\eqref{eqn:thetapos} holds for $f_2$
in place of $f_0$.  Now
consider $\Exp_x[f_2(x-\theta_-)-f_2(x)]$.  The random variable
$(f_2(x-\theta_-)-f_2(x))\1{\theta_-\leq x^{1-\eps}}$ can be dealt with in exactly the same
way as in the proof of
Lemma~\ref{lem:lyapunov-f0}, so to show that the analogue of equation~\eqref{eqn:thetaneg} holds for $f_2$, it is enough to
prove that, as $x \to +\infty$,
\begin{equation}
\label{eqn:thetaneg-large}
\Exp_x[(f_2(x-\theta_-)-f_2(x))\1{\theta_- > x^{1-\eps}}] = o(x^{\nu-\alpha}).
\end{equation}
If $\nu < 0$ this follows in the same way as~\eqref{eq:f0-big-neg}, since $f_2(x) \in [0,1]$ for all $x\in \R$.
If $\nu >0$, then $f_2(x-\theta_-) \leq f_2(x)$ whenever $\theta_- \leq 2x$, so we have
\[
\left| \Exp_x[(f_2(x-\theta_-)-f_2(x))\1{x^{1-\eps} < \theta_- \leq 2x}] \right| \leq x^\nu \Pr_x[\theta_- > x^{1-\eps}]
\leq Cx^{\nu-\beta(1-\eps)} = o(x^{\nu-\alpha}),
\]
for small enough $\eps>0$ because $\beta > \alpha$. Since $f_2(x-\theta_-) - f_2(x) \leq
\theta_-^\nu$ if $\theta_- > 2x$, we have
\[
\left|\Exp_x[(f_2(x-\theta_-)-f_2(x))\1{ \theta_- > 2x}] \right| \leq \Exp_x[\theta_-^\nu\1{\theta_->2x}]
\leq Cx^{\nu-\beta} = o(x^{\nu-\alpha})
\]
because $\beta > \alpha$. This proves~\eqref{eqn:thetaneg-large}, and hence also the analogue of~\eqref{eqn:thetaneg} for $f_2$.
  The remainder of the proof exactly follows the proof of
Lemma~\ref{lem:lyapunov-f0}.
\end{proof}

Now we turn to the case of heavier inwards tails. Define
\begin{align}
\label{eq:kappa-1}
\kappa_1 (\nu ) & := \kappa_1 (\beta, \nu) := - \frac{(1-\beta+\nu) \Gamma (\nu+1) \Gamma ( 1-\beta)}{\Gamma (2-\beta + \nu)} ,
\text{ and}\\
\label{eq:kappa-2}
\kappa_2 (\nu) & := \kappa_2 (\beta, \nu) := \Gamma (\nu) \left( \frac{ \Gamma (\beta-\nu)}{\Gamma (\beta)}  - \frac{(1-\beta+\nu) \Gamma ( 1-\beta)}{\Gamma (2-\beta + \nu)}
\right). \end{align}

\begin{lemma}
\label{lem:lyapunov-R-in}
Suppose that the random walk $\Xi$ on $\R$~satisfies~\eqref{ass:tails-on-R-in}.
Then, for any $\nu$ for which $0 < \nu < \beta$, the following hold.
(i) As $x \to +\infty$,
\[
D_1 (x)
= \nu x^{\nu-1} \Exp_x[\theta] +
c x^{\nu -\beta} \kappa_1 (\nu)
+
o(x^{\nu-\beta}).
\]
(ii) As $x \to \pm\infty$,
\begin{align*}
D_2 (x) &= \nu \sign(x) |x|^{\nu-1} \Exp_x[\theta] +
c \nu |x|^{\nu-\beta} \kappa_2 (\nu) +
o(|x|^{\nu-\beta}).
\end{align*}
If in addition, assumption~\eqref{ass:inwards-tail+} is satisfied for some $\delta >0$, 
then the asymptotic expression in~(ii) also holds for $-\delta_\star < \nu < 0$, where $\delta_\star = \min\{\delta, 1\}$.
\end{lemma}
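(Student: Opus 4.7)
The plan is to mirror the computation in Lemma~\ref{lem:lyapunov-f0}, now applied to the left tail of $\theta$. I split
\[
D_i(x) = \Exp_x[f_i(x+\theta_+) - f_i(x)] + \Exp_x[f_i(x-\theta_-) - f_i(x)].
\]
Under \eqref{ass:tails-on-R-in}, $\theta_+$ has a uniform $\alpha$-moment for $x \geq x_0$ with $\alpha > \beta$, so applying the Taylor argument from the proof of Lemma~\ref{lem:big-drift} to the conditional expectation involving $\theta_+$ alone yields
\[
\Exp_x[f_i(x+\theta_+) - f_i(x)] = \nu x^{\nu-1}\Exp_x[\theta_+] + O(x^{\nu - \alpha + \eps})
\]
for any small $\eps > 0$ with $\nu > -\eps$; choosing $\eps \in (|\nu|, \alpha - \beta)$ (possible because $\alpha>\beta$) reduces this to $\nu x^{\nu-1}\Exp_x[\theta_+] + o(x^{\nu-\beta})$.

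The second expectation carries the new content. For $0 < \nu < \beta$, I reproduce the three-piece decomposition of the $\theta_+$ analysis in Lemma~\ref{lem:lyapunov-f0}, with $\theta_-$ in place of $\theta_+$ and $(1 - \theta_-/x)^\nu$ in place of $(1+\theta_+/x)^\nu$. The bounded-$\theta_-$ Taylor remainder is $O(x^{\nu + 2\eps - 2}) = o(x^{\nu-\beta})$; the linear piece contributes the drift $-\nu x^{\nu - 1}\Exp_x[\theta_-]$; and the large-$\theta_-$ piece is evaluated via the monotone tail-integration identity (Lemma~\ref{lem:exp-tail}), since the map $y \mapsto f_i(x-y) - f_i(x)$ is piecewise-monotone with explicit derivative. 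After the substitution $y = ux$, the uniform tail in~\eqref{ass:tails-on-R-in} converts this into $c\nu x^{\nu-\beta}$ times a $u$-integral, modulo $o(x^{\nu-\beta})$, by the same replacement-argument as in Lemma~\ref{lem:lyapunov-f0}.

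The difference between $f_1$ and $f_2$ shows up in the range of the $u$-integral. For $f_1$, the range $\theta_- \geq x - 1$ contributes only via $f_1(x-\theta_-) = 1$, so the $u$-integral reduces to $[0,1)$; applying Lemma~\ref{lem:int-positive-part} of the appendix to evaluate $\int_0^1((1-u)^\nu-1)u^{-\beta-1}\,\mathrm du$-type expressions in terms of gamma functions identifies the coefficient as $c\kappa_1(\nu)$ from~\eqref{eq:kappa-1}. For $f_2$, the additional range $\theta_- > x + 1$ contributes, via $f_2(x - \theta_-) = (\theta_- - x)^\nu$, an extra integral on $[1,\infty)$, and combining the two pieces through another appendix identity yields $c\nu\kappa_2(\nu)$ from~\eqref{eq:kappa-2}. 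The $x \to -\infty$ asymptotics in (ii) follow by applying the $x \to +\infty$ case to $-\xi_n$, using $f_2(-x) \equiv f_2(x)$ and the symmetry of~\eqref{ass:tails-on-R-in}, as in the proof of Lemma~\ref{lem:lyapunov-R-out}.

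Part (iii), with $-\delta_\star < \nu < 0$, is the main obstacle. Here $f_2$ is bounded and $y \mapsto f_2(x - y) - f_2(x)$ decomposes into a non-decreasing piece on $[0, x-1]$ and a non-increasing piece on $[x+1, \infty)$, each still amenable to the tail-integration identity; the leading integrals are analytic continuations of those computed for $\nu > 0$ and still evaluate to $c \nu \kappa_2(\nu) x^{\nu-\beta}$. The delicate point is the error estimate: near $y = x$ the derivative $|\nu|(x-y)^{\nu-1}$ is more singular than in the $\nu>0$ regime, so merely using the $o(1)$ rate in~\eqref{ass:tails-on-R-in} no longer yields an $o(x^{\nu-\beta})$ replacement error. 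The polynomial rate $O(y^{-\delta})$ provided by~\eqref{ass:inwards-tail+} is used precisely here: writing $\Pr_x[\theta_- > y] = c y^{-\beta} + O(y^{-\beta-\delta})$ and balancing this against the $(x-y)^{\nu-1}$ singularity yields a replacement error of order $x^{\nu - \beta - \min\{\delta,\,\nu+1\}}$, which is $o(x^{\nu-\beta})$ precisely when $-\nu < \min\{\delta,1\} = \delta_\star$, matching the stated hypothesis.
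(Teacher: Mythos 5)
Your high-level plan is the same as the paper's: split into $\theta_+$ and $\theta_-$ contributions, handle the light-tailed $\theta_+$ by a Taylor argument, handle the heavy-tailed $\theta_-$ by Taylor on $\{\theta_-\leq x^\eps\}$ plus the integration-by-parts identity (Lemma~\ref{lem:exp-tail}) on the remainder, substitute $y=ux$, and invoke appendix integrals. However, there are two substantive problems in how you fill in the details.

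First, the appendix lemmas you need are Corollary~\ref{cor:int-negative-part-to-x-1} and Lemma~\ref{lem:int-negative-part-from-x+1} (with $p=1-\beta$, $q=\nu$), not Lemma~\ref{lem:int-positive-part}. After substitution the pieces of $h'_{i,x}$ give the \emph{truncated} integrals $\int_0^{1-x^{-1}}((1-u)^{\nu-1}-1)u^{-\beta}\,\ud u$ and (for $i=2$) $\int_{1+x^{-1}}^\infty(u-1)^{\nu-1}u^{-\beta}\,\ud u$, whose truncation at $1\mp x^{-1}$ reflects the kinks of $f_i$ at $y=x\mp 1$. These truncated integrals are not simply the $[0,\infty)$ beta integral of Lemma~\ref{lem:int-positive-part}: Corollary~\ref{cor:int-negative-part-to-x-1} and Lemma~\ref{lem:int-negative-part-from-x+1} each produce an extra $-x^{-\nu}/\nu$ term, and it is exactly these terms that control the error when $\nu<0$. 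If you use Lemma~\ref{lem:int-positive-part} you never see this.

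Second, and relatedly, your error estimate for the case $-\delta_\star<\nu<0$ is not correct and does not actually produce the stated constraint. You claim a replacement error of order $x^{\nu-\beta-\min\{\delta,\nu+1\}}$, which is $o(x^{\nu-\beta})$ for every $\nu>-1$, not only for $\nu>-\delta_\star$; so this would prove a stronger statement than the lemma, which should raise a flag. The genuine obstruction is that, for $\nu<0$, the truncated incomplete-beta integrals above are of order $x^{\nu-\beta}(1+x^{-\nu})\asymp x^{-\beta}$ rather than $x^{\nu-\beta}$, because of the $-x^{-\nu}/\nu$ divergence. The replacement error (bounding $\sup_{z\geq x^\kappa}|z^\beta\Pr_x[\theta_->z]-c|$ against these integrals) is then of order $x^{-\kappa\delta}\cdot x^{-\beta}$, and requiring this to be $o(x^{\nu-\beta})$ gives $\nu>-\kappa\delta$, hence $\nu>-\delta$ after letting $\kappa\uparrow 1$. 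The extra constraint $\nu>-1$ comes from the hypotheses of Corollary~\ref{cor:int-negative-part-to-x-1} and Lemma~\ref{lem:int-negative-part-from-x+1} (which need $q>-1$). Together these give $\nu>-\min\{\delta,1\}=-\delta_\star$. There is also no ``singularity near $y=x$'': the integral over $(0,x-1)$ has bounded integrand at its right endpoint, and the growth is a cumulative effect captured by the $x^{-\nu}$ term, not a pointwise blow-up.

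A minor further note on the $\theta_+$ estimate: choosing $\eps\in(|\nu|,\alpha-\beta)$ is not always possible (e.g.\ if $|\nu|\geq\alpha-\beta$), and you should not need $\eps>|\nu|$ when $\nu>0$. The paper sidesteps this by bounding directly: on $\{\theta_+<x\}$ one has $\theta_+^2\leq x^{2-\alpha}\theta_+^\alpha$, so the Taylor remainder is $O(x^{\nu-\alpha})=o(x^{\nu-\beta})$, and the $\{\theta_+\geq x\}$ contribution is handled by Markov. That avoids the $\eps$-interval bookkeeping entirely.
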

\begin{proof}
As in the proof of Lemma~\ref{lem:lyapunov-R-out}, for both results it suffices to suppose that $x \geq 1$
(and later to take $x\to+\infty$). Also, without loss of generality, we may suppose that 
$1 < \beta < \alpha < 2$. We can treat $f_1$ and $f_2$ together for most of the computations.
Indeed, let $i \in \{1,2\}$. Then, by the monotonicity of $x \mapsto f_i(x)$ for $x \geq 1$,
\[
\Exp_x \left[\left| f_i (x+\theta_+)-f_i (x) \right| \1{\theta_+ \geq x} \right]  \leq \begin{cases}
\Exp_x[ (2\theta_+)^\nu \1{\theta_+ \geq x}] & \text{for $\nu > 0$}, \\
x^\nu \Pr_x[\theta_+ \geq x] & \text{for $\nu < 0$}.
\end{cases}
\]
Suppose that $\nu < \beta$. Markov's inequality with assumption~\eqref{ass:tails-on-R-in}
implies that, for $x >x_0$, $\Pr_x[\theta_+ \geq x] \leq C
x^{-\alpha} = o(x^{-\beta})$, and since $\nu < \beta<\alpha$ we also have
\[
\Exp_x[ \theta_+^\nu \1{\theta_+ \geq x}] = x^{\nu-\alpha}\Exp_x[\theta_+^\alpha \1{\theta_+
  \geq x}] \leq C x^{\nu-\alpha} = o(x^{\nu-\beta}).
\]
Now consider $\Exp_x[ | f_i(x+\theta_+)-f_i(x)- \nu x^{\nu-1} \theta_+| \1{\theta_+ < x}]$.
Using~\eqref{eqn:Taylor-nu}, and the fact that~\eqref{ass:tails-on-R-in} holds for some $\alpha \in (1,2)$, 
we can bound this expression by
\[
x^\nu \Exp_x\left[\frac{|\nu(\nu-1)|\theta_+^2}{2x^2} \1{\theta_+ < x}\right] \leq C
x^{\nu-\alpha} \Exp_x[\theta_+^\alpha \1{\theta_+ < x}] = o(x^{\nu-\beta}).
\]
Since $x^{\nu-1}\Exp_x[\theta_+ \1{\theta_+ \geq x}] \leq x^{\nu-1} \cdot C x^{1-\alpha} =
  o(x^{\nu-\beta})$, these results combine to give
\begin{equation}\label{eqn:heavy-in-thetapos}
\Exp_x[ f_i (x+\theta_+)-f_i (x) ] = \nu x^{\nu-1} \Exp_x[\theta_+] + o(x^{\nu-\beta}).
\end{equation}

For the expectation $\Exp_x[ f_i(x-\theta_-) - f_i(x) ]$, we consider the random variables
\begin{align*}
B_{i,1} &= ( f_i(x-\theta_-) - f_i(x) + \nu x^{\nu-1} \theta_-)\1{\theta_- \leq x^\eps},\\
B_{i,2} &= ( f_i(x-\theta_-) - f_i(x) + \nu x^{\nu-1} \theta_-)\1{\theta_- > x^\eps},\\
B_3 &= - \nu x^{\nu-1} \theta_-,
\end{align*}
where $\eps \in (0, \frac{2-\beta}{2} )$ is fixed.
For $x \geq 1$ we can write 
\[ |B_{i,1}| = x^\nu \big| (1-\theta_-/x)^\nu - 1 + \nu \theta_-/x \big| \1{\theta_-
  \leq x^\eps}, \]
	and using~\eqref{eqn:Taylor-nu} with $z=-\theta_-/x$ (valid since $\nu <
\beta < 2$ and $-\theta_-/x \geq -x^{\eps-1} > -1$ for large enough $x$),
 we find that for $i \in \{1,2\}$, for all large enough $x$,
\[
|B_{i,1}| \leq \frac{1}{2}x^{\nu-2}|\nu(\nu-1)|\,\theta_-^2(1-\theta_-/x)^{\nu-2}\1{\theta_-
  \leq x^\eps} \leq | \nu (\nu-1) | x^{\nu+2\eps-2},
\]
and therefore $\Exp_x[B_{i,1}] = o(x^{\nu-\beta})$ by choice of $\eps$.

It is in the calculation of $\Exp_x[B_{i,2}]$ that we see the difference between the $f_i$.
Define the function $h_{i,x} : \RP \to \R$ by $h_{i,x}(y) =
f_i(x-y)-f_i(x) + \nu x^{\nu-1} y$, which, as a function of $y$, is continuous on $\RP$ and 
only fails to be differentiable at $y=x- 1$ and, in the case of $i=2$, also at $y = x+1$.
Away from these two points, the derivative is
\begin{equation}
\label{eq:h-prime}
h'_{i,x}(y) = \begin{cases}
-\nu(x-y)^{\nu-1} + \nu x^{\nu-1} &\text{for $0 < y < x-1$},\\
 \nu x^{\nu-1} &\text{for $x-1 < y< x+1$},\\
(i-1) \nu(y-x)^{\nu-1} + \nu x^{\nu-1} &\text{for $y > x+1$}.
\end{cases}
\end{equation}
Hence $h_{i,x}$ is piecewise monotonic, and we can apply Lemma~\ref{lem:exp-tail} to get
\begin{equation}
\label{eq:B2}
\Exp_x[B_{i,2}] = h_{i,x} (x^\eps)\Pr_x[\theta_- > x^\eps] + \int_{x^\eps}^\infty h'_{i,x}(y)
\Pr_x[\theta_- > y] \ud y,
\end{equation}
and the integral is finite for fixed $x > x_0$, since as $y\to \infty$ the integrand decays like $y^{-\beta}$ 
(if $i=1$, or if $i=2$ and $\nu \leq 1$)
or $y^{\nu-\beta-1}$ (if $i=2$ and $\nu > 1$). 
Using~\eqref{eqn:Taylor-nu} we find that $h_{i,x}(x^\eps) = O ( x^{\nu+2\eps-2} ) = o(x^{\nu-\beta})$.

Consider the integral in~\eqref{eq:B2}. Let $\delta (y) = \sup_{x \geq x_0} \sup_{z \geq y} | z^\beta \Pr_x [ \theta_- > z ] - c |$;
then $\delta (y)$ is non-increasing and, by assumption~\eqref{ass:tails-on-R-in}, $\delta(y) \to 0$ as $y \to \infty$.
From~\eqref{eq:h-prime}, using Taylor's theorem and the fact that  $\beta <2$, for any $\gamma \in (0,1)$ we obtain
\[
\left|\int_0^{x^\gamma} h'_{i,x}(y) y^{-\beta} \ud y \right| = \int_0^{x^\gamma} |h'_{i,x}(y)| y^{-\beta} \ud y
\leq 2x^{\nu-2}|\nu(\nu-1)|\int_0^{x^\gamma} y^{1-\beta} \ud y = o(x^{\nu-\beta}),
\]
and the same bound holds with $\Pr_x[\theta_->y]$ in place of $y^{-\beta}$, provided $x > x_0$.
Hence, for any $\kappa \in (\eps,1)$,
\begin{align*}
& {} \left| \int_{x^\eps}^\infty h'_{i,x}(y) \Pr_x[\theta_->y] \ud y   -  \int_{0}^\infty
  h'_{i,x}(y)cy^{-\beta}\ud y \right|\\
& {} \qquad\qquad {} \leq \left| \int_{x^\kappa}^\infty h'_{i,x}(y) \Pr_x[\theta_->y] \ud y   -  \int_{x^\kappa}^\infty
  h'_{i,x}(y)cy^{-\beta}\ud y \right| + o (x^{\nu -\beta}) .
\end{align*}
By~\eqref{eq:h-prime}, the sign of $h'_{i,x}(y)$
is  constant on $(0,x-1)$ and on $(x-1,\infty)\setminus \{x+1\}$; thus
\begin{align*}
& {} \left| \int_{x^\kappa}^\infty h'_{i,x}(y) \Pr_x[\theta_->y] \ud y   -  \int_{x^\kappa}^\infty
  h'_{i,x}(y)cy^{-\beta}\ud y \right|\\
& {} \qquad\qquad {} \leq \delta (x^\kappa ) \left( \left|\int_{x^\kappa}^{x-1} h'_{i,x}(y) y^{-\beta} \ud y \right| +
\left|\int_{x-1}^{\infty} h'_{i,x}(y) y^{-\beta} \ud y \right| \right).
\end{align*}
Therefore, for $\kappa \in (\eps,1)$, as $x \to +\infty$,
\begin{align}
\label{eq:in-jump-bound} 
& {} \left| \int_{x^\eps}^\infty h'_{i,x}(y) \Pr_x[\theta_->y] \ud y   -  \int_{0}^\infty
  h'_{i,x}(y)cy^{-\beta}\ud y \right| \nonumber\\
& {} \qquad\qquad {} \leq \delta (x^\kappa ) \left( \left|\int_{0}^{x-1} h'_{i,x}(y) y^{-\beta} \ud y \right| +
\left|\int_{x-1}^{\infty} h'_{i,x}(y) y^{-\beta} \ud y \right| \right) + o(x^{\nu-\beta}).
\end{align}
For both $i \in \{1,2\}$, we get from~\eqref{eq:h-prime} that
for $\nu > -1$, $\nu \neq 0$,
\begin{align*}
\left|\int_{0}^{x-1} h'_{i,x}(y) y^{-\beta} \ud y \right|
  = \left| \nu x^{\nu - \beta} \int_0^{1-x^{-1}} \left( (1-u)^{\nu - 1} - 1 \right) u^{-\beta} \ud u \right|  
  \leq C x^{\nu-\beta} \left( 1 + x^{-\nu} \right) ,
\end{align*}
by an application of Corollary~\ref{cor:int-negative-part-to-x-1} with $p=1-\beta$ and $q=\nu$.
For $i =2$, 
 Lemma~\ref{lem:int-negative-part-from-x+1} (with $p=1-\beta$, $q=\nu$) shows that for $-1 < \nu < \beta$,
$\nu \neq 0$, 
\[  
\left|\int_{x+1}^{\infty} h'_{2,x}(y) y^{-\beta} \ud y \right| 
 = \left| \nu  x^{\nu - \beta} \int_{1+x^{-1}}^\infty \left( (u-1)^{\nu - 1} + 1 \right) u^{-\beta} \ud u \right|
\leq C x^{\nu-\beta} \left( 1 + x^{-\nu} \right) . \]
More straightforward is the case $i=1$, where we have
\[ \left|\int_{x+1}^{\infty} h'_{1,x}(y) y^{-\beta} \ud y \right| 
=
\left|\nu x^{\nu - 1} \int_{x+1}^{\infty}  y^{-\beta} \ud y \right| \leq C x^{\nu -\beta} .\]
Thus from~\eqref{eq:in-jump-bound} we obtain, for $i \in \{1,2\}$,
\[ \left| \int_{x^\eps}^\infty h'_{i,x}(y) \Pr_x[\theta_->y] \ud y   -  \int_0^\infty
  h'_{i,x}(y)cy^{-\beta}\ud y \right| \leq C (1 + x^{-\nu}) \delta (x^\kappa) x^{\nu-\beta} + o ( x^{\nu -\beta}).
 \]
This last bound is $o ( x^{\nu -\beta})$
if either (i) $\nu >0$, or (ii) $\delta (y) = O (y^{-\delta})$
 for some $\delta >0$, and $-\kappa \delta < \nu <0$;
taking $\kappa <1$ close to $1$, we can permit any $\nu > -\delta$.
	Thus, in either case,
	from~\eqref{eq:B2} with~\eqref{eq:h-prime} we have that  $\Exp_x[B_{i,2}]$ is equal to $\nu x^{\nu - \beta}$ times
	\[  
-\int_0^{1-x^{-1}}  ((1-u)^{\nu-1} -1)  u^{-\beta} \ud u  +\int_{1-x^{-1}}^\infty   u^{-\beta}
\ud u +  (i-1) \int_{1+x^{-1}}^\infty  
(u-1)^{\nu-1}u^{-\beta} \ud u + o(1) . \]
A final application of Corollary~\ref{cor:int-negative-part-to-x-1} and
Lemma~\ref{lem:int-negative-part-from-x+1} (with $p=1-\beta$ and $q=\nu$)
yields expressions for $\Exp_x [ B_{i,2}]$ which, when combined with~\eqref{eqn:heavy-in-thetapos}
and the fact that $\Exp_x[ f_i (x- \theta_-) - f_i(x)] = \Exp_x[B_{i,2}] - \nu x^{\nu-1} \Exp_x[\theta_-] +
o(x^{\nu-\beta})$, give the claimed results.
\end{proof}

Finally, we state an analogous result in the case of balanced tails. 
The proof amounts to combining elements of the proofs of Lemmas~\ref{lem:lyapunov-R-out} and~\ref{lem:lyapunov-R-in}, and is omitted.

\begin{lemma}
\label{lem:lyapunov-R-bal}
Suppose that the random walk $\Xi$ on $\R$~satisfies~\eqref{ass:tails-on-R-bal}.
Then, for any $\nu$ for which $0 < \nu < \alpha$, the following hold.
(i) As $x \to +\infty$,
\[
D_1 (x)
= \nu x^{\nu-1} \Exp_x[\theta] +
c  x^{\nu -\alpha} (\nu \kappa_0 (\alpha,\nu) + \kappa_1 (\alpha,\nu))
+
o(x^{\nu-\alpha}).
\]
(ii) As $x \to \pm\infty$,
\begin{align*}
D_2 (x) &= \nu \sign(x) |x|^{\nu-1} \Exp_x[\theta] +
c \nu |x|^{\nu-\alpha} (\kappa_0 (\alpha,\nu) + \kappa_2 (\alpha,\nu) ) +
o(|x|^{\nu-\alpha}).
\end{align*}
If in addition,
the limit assumptions in~\eqref{ass:tails-on-R-bal}
are strengthened to $O(y^{-\delta})$ for some
 $\delta >0$, 
then~(ii) also holds for $-\delta_\star < \nu < 0$, where $\delta_\star = \min\{\delta, 1 \}$.
\end{lemma}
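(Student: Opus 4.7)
The plan is to split, for $x \geq 1$ and $i \in \{1,2\}$,
\[
D_i(x) = \Exp_x[f_i(x+\theta_+) - f_i(x)] + \Exp_x[f_i(x-\theta_-) - f_i(x)],
\]
and handle the two pieces independently, importing the relevant calculation from the appropriate preceding lemma. The point is that~\eqref{ass:tails-on-R-bal} provides precisely the one-sided inputs needed for each half: the positive-tail asymptotic $\Pr_x[\theta_+>y]\sim cy^{-\alpha}$ is exactly what the proof of Lemma~\ref{lem:lyapunov-R-out} uses for its positive-jump term, while the negative-tail asymptotic $\Pr_x[\theta_->y]\sim cy^{-\alpha}$ is exactly what the proof of Lemma~\ref{lem:lyapunov-R-in} uses for its negative-jump term, with $\beta$ replaced by $\alpha$ throughout.

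For the positive-jump piece I would reuse, with $f_0$ replaced by $f_i$, the calculation leading to~\eqref{eqn:thetapos} in the proof of Lemma~\ref{lem:lyapunov-f0}; this yields $\nu x^{\nu-1}\Exp_x[\theta_+] + c\nu x^{\nu-\alpha}\kappa_0(\alpha,\nu) + o(x^{\nu-\alpha})$, and relies only on the positive-tail asymptotic together with $\nu<\alpha<2$, with no invocation of negative moments. For the negative-jump piece I would transcribe the treatment of $\Exp_x[f_i(x-\theta_-)-f_i(x)]$ from the proof of Lemma~\ref{lem:lyapunov-R-in}, substituting $\alpha$ for $\beta$; that calculation produces $-\nu x^{\nu-1}\Exp_x[\theta_-] + c x^{\nu-\alpha}\kappa_1(\alpha,\nu) + o(x^{\nu-\alpha})$ when $i=1$ and $-\nu x^{\nu-1}\Exp_x[\theta_-] + c\nu x^{\nu-\alpha}\kappa_2(\alpha,\nu) + o(x^{\nu-\alpha})$ when $i=2$. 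Adding the two contributions collapses $\Exp_x[\theta_+]-\Exp_x[\theta_-]$ into $\Exp_x[\theta]$ and produces the advertised combinations $\nu\kappa_0+\kappa_1$ and $\kappa_0+\kappa_2$. Extending the $f_2$ formula in~(ii) to $x\to-\infty$ uses the reflection trick already employed in Lemma~\ref{lem:lyapunov-R-out}: the symmetry $f_2(-x)\equiv f_2(x)$ and the fact that $-\Xi$ also satisfies~\eqref{ass:tails-on-R-bal} let the $x\to+\infty$ result for $-\Xi$ be relabelled, modulo a sign in $\Exp_x[\theta]$. Finally, the extension to $-\delta_\star<\nu<0$ under the strengthened $O(y^{-\delta})$ hypothesis mirrors exactly the corresponding step in Lemma~\ref{lem:lyapunov-R-in}(ii): the quantity $\delta(y)=O(y^{-\delta})$ controls the error term $(1+x^{-\nu})\delta(x^\kappa)x^{\nu-\alpha}$, and by choosing $\kappa<1$ sufficiently close to $1$ this remains $o(x^{\nu-\alpha})$ for any $\nu>-\delta_\star$.

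The main obstacle is purely bookkeeping: I must verify that, in each borrowed argument, the moment or tail hypothesis that is \emph{unavailable} in the balanced setting (the bounded $\beta$-moment of $\theta_-$ in Lemma~\ref{lem:lyapunov-R-out}, and the bounded $\alpha$-moment of $\theta_+$ in Lemma~\ref{lem:lyapunov-R-in}) is used there only to handle the half of the increment that is \emph{not} being reused here. Inspection of those two proofs confirms that both decompose cleanly along $\theta = \theta_+ - \theta_-$ and treat each half purely via the corresponding one-sided tail input, so no hidden dependency obstructs the transfer, and the gamma-function identities encoded in $\kappa_0,\kappa_1,\kappa_2$ carry over unchanged.
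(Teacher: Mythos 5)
Your proposal is correct and is exactly what the authors had in mind: the paper omits the proof of Lemma~\ref{lem:lyapunov-R-bal}, remarking only that it ``amounts to combining elements of the proofs of Lemmas~\ref{lem:lyapunov-R-out} and~\ref{lem:lyapunov-R-in}.'' Your key bookkeeping check---that the positive-jump computation in Lemma~\ref{lem:lyapunov-f0}/\ref{lem:lyapunov-R-out} and the negative-jump computation in Lemma~\ref{lem:lyapunov-R-in} each rely only on the corresponding one-sided tail input, so the opposite-side moment bound absent under~\eqref{ass:tails-on-R-bal} is never invoked---is the essential observation, and it is accurately carried out, including the reflection argument for $x\to-\infty$ and the $O(y^{-\delta})$ extension to $-\delta_\star<\nu<0$.
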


\section{Proofs of main results}
\label{sec:proofs}

\subsection{The half line}
\label{sec:half-line-proofs}

First we establish our recurrence criteria.

\begin{proof}[Proof of Theorem~\ref{thm:recurrence-on-RP}]
Under the conditions of part~(i) of the theorem, Lemma~\ref{lem:lyapunov-f0} gives
\begin{equation}
\label{eq:D0} D_0 (x) = \nu x^{\nu-\alpha} \left( x^{\alpha-1} \mu (x) + c \kappa_0 ( \nu) + o (1) \right), \text{ for } \nu \in (\alpha-\beta, \alpha), \end{equation}
where $\kappa_0$ is given at~\eqref{eq:kappa-0}.
If~\eqref{eq:drift-recurrence}
holds, then there are $\eps >0$ and $x_1 \in \RP$ such that
$x^{\alpha-1} \mu (x)  + c \pi \cosec(\pi\alpha) < -2\eps$ for all $x \geq x_1$.
Since $\kappa_0 (0) = \Gamma(\alpha) \Gamma (1-\alpha) = \pi\cosec(\pi\alpha)$,
we can find $\nu >0$ such that $c \kappa_0 (\nu) < c \pi\cosec(\pi\alpha) + \eps$.
Then $x^{\alpha-1} \mu (x) + c \kappa_0 ( \nu) \leq - \eps$ for all $x \geq x_1$,
so that the right-hand side of~\eqref{eq:D0} is negative for all $x$ sufficiently large.
Since $\nu >0$, $f_0 (x) \to \infty$ as $x \to \infty$, and Lemma~\ref{lem:rec-RP}
implies recurrence, noting~\eqref{ass:non-confinement-on-RP}.

Similarly, if~\eqref{eq:drift-transience} holds then we can find $\nu < 0$
such that the right-hand side of~\eqref{eq:D0} is again negative for all $x$ sufficiently large,
but now with $f_0(x) \to 0$ as $ x \to \infty$. Hence Lemma~\ref{lem:trans-RP}
implies transience, again noting~\eqref{ass:non-confinement-on-RP}.

Finally, consider part~(ii) of the theorem.
 Fix $\nu = \max \{ 1 + \gamma, 1\}$;
since $\gamma < \alpha -1$, we have $1 \leq \nu < \alpha$.
Then 
Lemma~\ref{lem:big-drift}
shows that, for any $\eps >0$,  
\[ D_0 (x) =  \nu x^{\nu-1-\gamma} \left( x^{\gamma} \mu (x) + O (x^{1+\gamma-\alpha+\eps} ) \right)  .\]
 Taking $\eps >0$ small enough and using the fact that $x^{\gamma} \mu (x) \leq - \delta$
for some $\delta >0$ and all $x$ sufficiently large, we get $D_0 (x) \leq - \delta \nu  + o (1)$,
and so Foster's criterion (see e.g.~Theorem~2.6.2 of~\cite{mpow}) yields positive recurrence.
\end{proof}

Before proving Theorem~\ref{thm:moments-on-RP} on moments of passage times,
we state a lemma that we will need for
our non-existence-of-moments results in the case where the drift
is dominant.

\begin{lemma}
\label{lem:big-jump}
Suppose that~\eqref{ass:tails-on-RP} holds, 
and 
$\liminf_{x \to \infty} ( x^\gamma \mu (x) ) > -\infty$ for some $\gamma$ with $0 \leq \gamma < \alpha -1$.
Then, for any $q \geq \frac{\alpha}{1+\gamma}$, all $a$ large enough,
and all $x >a$,
$\Exp_x [ \tau_a ^q] =\infty$.
\end{lemma}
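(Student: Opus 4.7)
The plan is a classical ``one big jump'' argument: I would establish the tail bound
\[
\Pr_x [ \tau_a > t ] \geq c\, t^{-\alpha/(1+\gamma)} \quad \text{for all sufficiently large $t$,}
\]
from which $\Exp_x[\tau_a^q] = q \int_0^\infty t^{q-1} \Pr_x[\tau_a > t] \, \ud t = \infty$ for every $q \geq \alpha/(1+\gamma)$ follows by direct integration. The heuristic is that a single increment of size at least $y$ has probability $\sim c y^{-\alpha}$ by~\eqref{ass:tails-on-RP}, while from a height of order $y$ the inward drift has magnitude at most $C y^{-\gamma}$, so reaching $[0,a]$ requires at least $\sim y^{1+\gamma}$ steps; equating these with $t = y^{1+\gamma}$ gives the claimed exponent.

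The Markov property at time~$1$ reduces matters to a ``stay-high'' estimate. Indeed,
\[
\Pr_x[ \tau_a > n+1 ] \geq \Pr_x[ \xi_1 \geq y ] \cdot \inf_{z \geq y} \Pr_z [ \tau_a > n ] ,
\]
and the first factor is at least $c y^{-\alpha}$ by~\eqref{ass:tails-on-RP} for $y$ large relative to $x$. Thus it suffices to show, for some constant $c_0 > 0$ and all sufficiently large $y$,
\[
\inf_{z \geq y} \Pr_z \bigl[ \tau_a > \lfloor c_0 y^{1+\gamma} \rfloor \bigr] \geq \tfrac{1}{2} .
\]
Setting $n = \lfloor c_0 y^{1+\gamma} \rfloor$ and combining then yields the displayed tail bound.

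The stay-high estimate is the crux and would be proved by a truncated-martingale plus bootstrap argument. Fix $z \geq y$, set $n = \lfloor c_0 y^{1+\gamma} \rfloor$, and introduce the stopping times $R := \inf \{ k \geq 0 : \xi_k < y/2 \}$ and $\tau := \inf \{ j \geq 1 : |\theta_j| > y \}$, together with the martingale
\[
N_k := \sum_{j=1}^k \bigl( \theta_j \1{|\theta_j| \leq y} - \Exp \bigl[ \theta_j \1{|\theta_j| \leq y} \bigmid \cF_{j-1} \bigr] \bigr) .
\]
On $\{ j-1 < R \}$, the condition $\xi_{j-1} \geq y/2$ activates~\eqref{ass:tails-on-RP}, supplying the tail bound $\Pr[|\theta_j| > y \mid \cF_{j-1}] \leq C y^{-\alpha}$ and the conditional variance bound $\Exp[(N_j - N_{j-1})^2 \mid \cF_{j-1}] \leq C y^{2-\alpha}$ (both by integrating against the assumed tails, using that $\beta > \alpha$ for the negative part). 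A union bound and Doob's maximal inequality then control the ``bad'' events:
\[
\Pr [ \tau \leq n \wedge R ] + \Pr \bigl[ \sup_{k \leq n \wedge R} |N_k | > y/4 \bigr] \leq C n y^{-\alpha} = C c_0 y^{1+\gamma - \alpha} \longrightarrow 0 \text{ as } y \to \infty ,
\]
since $\gamma < \alpha - 1$.

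On the complementary good event, for $k \leq n \wedge R$ no truncation takes effect, so $\xi_k - z = N_k + \sum_{j=1}^k A_j$ with $A_j = \mu(\xi_{j-1}) + O(y^{1-\alpha})$. The hypothesis $\mu(x) \geq -C x^{-\gamma}$ for $\xi_{j-1} \geq y/2$ gives $\sum_{j=1}^k A_j \geq -C c_0 y$ (the $O(y^{1-\alpha})$ correction is negligible since $1 + \gamma < \alpha$). Taking $c_0$ sufficiently small, this together with $|N_k| \leq y/4$ yields $\xi_k \geq z - y/2 \geq y/2$, whence $R > k$; a one-step induction on $k$ then shows $R > n$ on the good event, so $\xi_k \geq y/2 > a$ throughout, giving $\tau_a > n$. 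The \emph{main obstacle} is precisely this bootstrap: the drift lower bound is only available on $\{ j-1 < R \}$, but it is needed to establish $R > n$, so the inductive argument must carefully couple the stopping time $R$ with the martingale and no-big-jump estimates. Once this is done, $\Pr_z[\tau_a > n] \geq 1/2$ for all large $y$ uniformly in $z \geq y$, and the proof concludes via the integration observation above.
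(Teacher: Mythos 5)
Your proof is correct and reaches the same tail estimate $\Pr_x[\tau_a > n] \gtrsim n^{-\alpha/(1+\gamma)}$ via the same outer reduction (big jump at time $1$, then a stay-high estimate), but the stay-high estimate is established by a genuinely different route. The paper introduces the auxiliary Lyapunov function $w_y(x) = (y-x)^\eta\1{x<y}$ with $\eta = 1+\gamma$, shows that its expected one-step increment is uniformly bounded on $\{x \geq y/2\}$, and then invokes a pre-packaged maximal inequality (Theorem~2.4.7 of \cite{mpow}) for nonnegative processes with bounded expected increments to get $\Pr[\min_{m\leq n}\xi_m < y/2 \mid \xi_0 > y] \leq 4Bny^{-\eta}$ in one stroke. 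Your version instead decomposes $\xi_k - z$ into a truncated martingale $N_k$, its compensator (drift plus truncation correction $O(y^{1-\alpha})$), and a big-jump error term, then controls each piece separately with Doob's $L^2$ maximal inequality for the martingale stopped at $R$ and a union bound for the big jumps, finally closing the loop with the stopping-time bootstrap you flag. Both implementations exploit the same balance: with drift $\gtrsim -y^{-\gamma}$ above level $y/2$, it takes $\gtrsim y^{1+\gamma}$ steps to cross a distance of order $y$, and the fluctuation/big-jump error terms are $O(ny^{-\alpha}) = O(y^{1+\gamma-\alpha}) \to 0$ because $\gamma < \alpha - 1$. The paper's choice $\eta = 1+\gamma$ hides this balance inside the Lyapunov function (the factor $(y-x)^{\eta-1} \approx y^\gamma$ cancels $\mu(x) \gtrsim -y^{-\gamma}$, making the increment $O(1)$), while your version makes it explicit in the choice $n = \lfloor c_0 y^{1+\gamma}\rfloor$. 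Your approach is more elementary and self-contained; the paper's is shorter given the maximal-inequality lemma already at hand and fits the paper's overall Lyapunov-function framework. One small point to tidy: what you describe as an ``induction'' on $k$ is cleaner phrased as a contradiction -- on the good event the estimates hold for all $k \leq n \wedge R$ including $k = R$ (since they need $\xi_{j-1} \geq y/2$ only for $j-1 < R$), so if $R \leq n$ then $\xi_R \geq y/2$, contradicting the definition of $R$.
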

\begin{proof}
Suppose that $x \geq x_0$.
The idea is that the chain
may start with a very big jump, and then takes a long time to return to near~$0$
(a similar idea was used in e.g.~the proof of Theorem~2.10 of~\cite{hmmw}).
Fix $\eta = 1+\gamma$, and for $x, y \in \RP$ let $w_y (x) = (y-x)^\eta \1 { x < y }$.
We claim that there exist $y_0 \in \RP$ and $B < \infty$ (not depending on $y$) such that
\begin{equation}
\label{eq:big-jump-claim} \Exp [ w_y ( \xi_{n+1} ) - w_y (\xi_n) \mid \xi_n = x] \leq B, \text{ for all } y \geq y_0 \text{ and all } x \geq y/2.\end{equation}
Given the claim, it follows from the maximal inequality in Theorem~2.4.7 of~\cite{mpow} that
\begin{equation}
\label{eq:big-jump-0}
\Pr \Big[ \min_{0 \leq m \leq n} \xi_m < y /2 \Bigmid \xi_0 > y \Big] \leq
 \Pr \Big[ \max_{0 \leq m \leq n} w_y (\xi_m ) \geq (y/2)^\eta \Bigmid \xi_0 > y \Big] \leq 4 B n y^{-\eta} ,\end{equation}
for all $y \geq y_0$ and all $n \geq 1$.
Let $a \geq x_0$. Then for $x > a$ and any $A \in (0,\infty)$,
\begin{equation}
\label{eq:big-jump-1}
 \Pr_x [ \tau_a > n ] \geq \Exp_x \left[ \1 { \xi_1 > A n^{1/\eta} } \Pr [ \tau_a > n \mid \xi_1 > A n^{1/\eta} ] \right] .\end{equation}
Choosing $y = A n^{1/\eta} > 2a$ in~\eqref{eq:big-jump-0} for $A$ sufficiently large, we get, for all $n \geq 1$,
\begin{equation}
\label{eq:big-jump-2}
  \Pr [ \tau_a > n \mid \xi_1 > A n^{1/\eta} ] \geq 1-  4 B A^{-\eta} \geq 1/2. \end{equation}
Combining~\eqref{eq:big-jump-1} and~\eqref{eq:big-jump-2} and using~\eqref{ass:tails-on-RP},
we get $\Pr_x [ \tau_a > n ] \geq c' n^{-\alpha/\eta}$
for some $c'>0$ and all $n$ large enough, uniformly in $x > a \geq x_0$. Since $\eta = 1+\gamma$, we get $\Exp_x [ \tau_a^q] = \infty$ for $x >a$ and $q \geq \frac{\alpha}{1+\gamma}$.

It remains to prove~\eqref{eq:big-jump-claim}. 
Under assumption~\eqref{ass:tails-on-RP},
we have $\sup_{x \geq x_0} \Exp_x [ |\theta|^q ] < \infty$
for any $q < \alpha$, so in particular this is true for $q = \eta$.
Since $x \mapsto w_y(x)$ is non-increasing,   if $x \geq y -1$ then
$w_y (x + \theta) \leq w_y (y - 1 - \theta_-) \leq ( 1 + \theta_- )^\eta$, so 
$\Exp_x [ w_y (x+\theta) - w_y (x) ] \leq B$ for any $x \geq y - 1$, if $y \geq y_0 \geq x_0 +1$, say.
Suppose now that $y/2 \leq x < y -1$. 
Then
\begin{align*}
 \Exp_x [ ( w_y(x+\theta) - w_y(x) )\1 { | \theta | > \tfrac{y-x}{2} } ] & \leq \Exp_x [ ( w_y(x-\theta_-) - w_y(x) )\1 { \theta_- > \tfrac{y-x}{2} } ]
\\
& \leq \Exp_x[(\theta_- + y-x )^\eta \1 { y-x < 2 \theta_-  }] \\
& \leq C \Exp_x[\theta_-^\eta] ,\end{align*}
which again is bounded, if $y \geq y_0 \geq 2x_0$, say.
By Taylor's theorem, for some $C < \infty$,
$| (1-z)^\eta - 1  + \eta z | \leq C z^2$
for all $|z| \leq 1/2$. 
Thus
\begin{align*}
\left|
 w_y ( x +\theta ) - w_y (x)   
+ \eta (y-x)^{\eta-1} \theta  
\right| \1 {   | \theta|  < \tfrac{y-x}{2}  }
\leq 
C \theta^2 (y-x)^{\eta-2}  \1 {   | \theta|  < \tfrac{y-x}{2}  }, \end{align*}
so that, since $\theta^2 = |\theta|^\eta |\theta|^{2-\eta} \leq C (y-x)^{2-\eta} | \theta |^\eta$ on the event $\{   | \theta|  < \tfrac{y-x}{2}   \}$, we have
\[ \Exp_x [ | w_y ( x +\theta ) - w_y (x) + \eta (y-x)^{\eta-1} \theta |  \1 {   | \theta|  <  \tfrac{y-x}{2}    } ]
= O( 1 ) .\]
Also $\Exp_x [ | \theta | \1 {   | \theta|  > \tfrac{y-x}{2}  } ] \leq C (y-x)^{1-\eta}$. Putting these pieces together we get
\[ \Exp_x [ w_y (x+\theta) - w_y (x) ] \leq C - \eta  (y-x)^{\eta-1} \Exp_x [ \theta  ] \leq B, \text{ for all } y/2 \leq x < y-1,\]
by choice of $\eta$ and the assumption on $\mu(x)$. This completes the proof of~\eqref{eq:big-jump-claim}.
\end{proof}

\begin{proof}[Proof of Theorem~\ref{thm:moments-on-RP}]
We first prove part~(ii) of the theorem.
Assuming that $x^{\alpha-1} \mu (x) = b+o(1)$, with $\kappa_0$ given by~\eqref{eq:kappa-0}
we get from~\eqref{eq:D0} that 
$D_0 (x) = \nu x^{\nu-\alpha} ( b + c \kappa_0 (\nu) + o(1) )$.
Recall that $\frac{\ud}{\ud z} \log \Gamma (z) = \psi (z)$ is
the digamma function, which is strictly increasing on $z >0$.
For $\nu < \alpha$,   we have
\[ \frac{\ud}{\ud \nu}  \kappa_0 (\nu ) = 
\frac{\Gamma (1-\alpha) \Gamma (\alpha-\nu)}{\Gamma (2-\nu)} \left( (1-\nu) \left(
\psi (2-\nu) - \psi(\alpha -\nu) \right) - 1\right)  .\]
For $\nu < \alpha \in (1,2)$ we have $\Gamma (1-\alpha) <0$, $\Gamma (\alpha -\nu) >0$,
and $\Gamma (2-\nu) >0$, and since $\psi ( \alpha - \nu ) < \psi (2 - \nu)$,
we have $\kappa'_0 (\nu ) > 0$ for $\nu \geq 1$.
For $\nu < 1$, using $\psi ( 2- \nu ) = \psi (1-\nu) + \frac{1}{1-\nu}$
and  $\psi(1-\nu) < \psi (\alpha -\nu)$,
we also get $\kappa_0' (\nu) >0$ for $\nu < 1$.
Thus $c\kappa_0$ is strictly increasing on $[0,\alpha)$,
with $c\kappa_0 (0) = c\pi\cosec(\pi\alpha) <0$
and $c\kappa_0 (\nu) \to +\infty$ as $\nu \to \alpha$.

Hence if $b + c\pi\cosec(\pi\alpha) <0$
there exists a unique $\nu^\star$ in $(0,\alpha)$
such that $c \kappa_0 (\nu^\star) = -b$, i.e.,
solving~\eqref{eqn:nu-star}, such that the following two statements hold.
\begin{enumerate}[(a)]
\item For all $\nu \in (0,\nu^\star)$
there exist $\delta >0$, $x_1 \in \RP$ so that
$D_0 (x) \leq -   \delta x^{\nu-\alpha}$ for all $x \geq x_1$.
\item For all $\nu \in (\nu^\star,\alpha)$
there exist $\delta >0$, $x_1 \in \RP$ so that
$D_0 (x) \geq \delta x^{\nu-\alpha}$ for all $x \geq x_1$.
\end{enumerate}
Since $f_0(x) = x^\nu$ for $x \geq 1$, we can rewrite the inequality in~(a) as
$D_0(x) \leq - \delta f(x)^{1-(\alpha/\nu)}$,
so~(a) implies that for any $\kappa < 1- (\alpha/\nu^\star)$ we can choose $\nu$ less than but arbitrarily close
to $\nu^\star$ so that~\eqref{eqn:finite-mom-super} holds for the process $Y_n =
f_0(\xi_n)$.  Therefore, for all $q < \nu^\star/\alpha$, Lemma~\ref{lem:finite-moments}
shows that $\Exp_x[\tau_a^q] < \infty$ for all $a \geq x_1$, the constant given in~(a), and any $x$.

On the other hand, since $D_0(x) =
O(x^{\nu-\alpha}) = o(1)$ for any $\nu \in (0,\alpha)$, the
process $Y_n = f_0(\xi_n)$ satisfies~\eqref{eqn:Y-inc-lower-bound}.
For $r \in
(1,\alpha/\nu)$, 
we note that
\begin{equation}
\label{eq:r}
   \Exp [ (f^\nu_0 (\xi_{n+1}))^r - (f^\nu_0 (\xi_n) )^r \mid \xi_n = x] 
=   D^{r\nu}_0 (x)   = O ( x^{r\nu-\alpha}) .\end{equation}
Since this is $O(f_0(x)^{r-(\alpha/\nu)})$, we see that~\eqref{eqn:Yr-inc-bound} also holds.  
Similarly, taking $r = p$ in~\eqref{eq:r} and using statement~(b),
we see that
inequality~\eqref{eqn:Yp-submart} holds for any $p \in (\nu^\star/\nu,\alpha/\nu)$.
Taking $\nu$ less than but arbitrarily close to $\alpha$ and applying
Lemma~\ref{lem:infinite-moments} we obtain, for all $q >
\nu^\star/\alpha$, all $a$ sufficiently large, and all $x >a$, that
$\Exp_x[\tau_a^q] = \infty$.

Next consider part~(i). In this case
$x^{\alpha-1} \mu(x) \to 0$,
so we may apply part~(ii) with $b=0$.
Clearly, the right hand side of~\eqref{eqn:nu-star} equals zero when $\nu^\star=1$, so this
is the (unique) solution for $b=0$, which yields the critical exponent $1/\alpha$, giving part~(i).

Finally, consider part~(iii).
Here,
we get from~\eqref{eq:D0} that 
$D_0 (x) = \nu x^{\nu-\gamma-1} ( b + o(1) )$,
where $x^{\nu-\gamma-1} = f_0(x)^{1-(\gamma+1)/\nu}$ for $x \geq 1$. 
So, for $b<0$ and any $\kappa <
1-\frac{\gamma+1}{\alpha}$, we can choose $\nu$ less than but arbitrarily close to $\alpha$ so
that~\eqref{eqn:finite-mom-super} holds for the process $Y_n = f_0(\xi_n)$.  Then
Lemma~\ref{lem:finite-moments} implies that $\Exp_x[\tau_a^q] <\infty$ for all $q < \frac{\alpha}{\gamma+1}$.
For the non-existence-of-moments result, we apply Lemma~\ref{lem:big-jump}
(note that we cannot use Lemma~\ref{lem:infinite-moments},
since here $Y_n$ is never a submartingale).
\end{proof}

\subsection{The whole line}
\label{sec:proofs-R}
 
First we deal with the case of heavier outwards tails. Recall the definition of $\kappa_0$ at~\eqref{eq:kappa-0}.

\begin{proof}[Proof of Theorem~\ref{thm:recurrence-on-R-out}]
Lemma~\ref{lem:lyapunov-R-out} with assumptions~\eqref{ass:tails-on-R-out} and~\eqref{ass:R-drift} shows that 
\begin{align}
\label{eq:D1}
 D_1 (x) & = \nu \left( (b + o(1) ) x^{\nu-1-\gamma} + ( c +o(1) ) x^{\nu -\alpha} \kappa_0 (\nu) \right), \text{ as } x \to +\infty, \\
\label{eq:D2}
 D_2 (x) & = \nu  \left( (b + o(1) ) |x|^{\nu-1-\gamma} + ( c +o(1) ) | x |^{\nu -\alpha} \kappa_0 (\nu) \right),  \text{ as } x \to \pm \infty. \end{align}
  Lemma~\ref{lem:lyapunov-R-out} and the symmetries of  assumptions~\eqref{ass:tails-on-R-out} and~\eqref{ass:R-drift} also gives
\begin{align}
\label{eq:D1-dash}
 D_1' (x) & := \Exp [ f_1 (-\xi_{n+1} ) - f_1 (-\xi_n) \mid -\xi_n = x] \\
& = \nu \left( (b + o(1) ) x^{\nu-1-\gamma} + ( c +o(1) ) x^{\nu -\alpha} \kappa_0 (\nu) \right), \text{ as } x \to +\infty .\nonumber
\end{align}

Suppose that $\gamma = \alpha +1$. Similarly to the proof of Theorem~\ref{thm:recurrence-on-RP},
  if $b + c \pi \cosec (\pi \alpha ) > 0$ then, since $\kappa_0 (0) = \pi\cosec(\pi\alpha)$,
 we can find $\nu < 0$
such that $\nu ( b + c \kappa_0 (\nu ) ) <0$, so that $D_1(x)<0$ and $D'_1(x) <0$ for all $x$ sufficiently large. Lemma~\ref{lem:directional-transience} with function $f_1$ then
implies directional transience, giving part~(v) of the theorem. If  $b + c \pi \cosec (\pi \alpha ) < 0$,
 then we can find $\nu > 0$
such that $\nu ( b + c \kappa_0 (\nu ) ) <0$ again, so, by~\eqref{eq:D2}, $D_2 (x) <0$ for all $|x|$ sufficiently large,
and then Lemma~\ref{lem:rec-R} with function $f_2$ implies recurrence.
The latter case also includes $b=0$, so we get the recurrence in both parts~(i) and~(iv) of the theorem.
The \emph{null} recurrence follows from Theorem~\ref{thm:moments-on-R-out}, which we establish below.

Finally, if $\gamma < \alpha -1$ then  
for all $x$ sufficiently large, $D_1(x)$ and 
$D'_1 (x)$ have the same sign as $\nu b$. For $b > 0$ we can choose $\nu <0$ such that
Lemma~\ref{lem:directional-transience} with function $f_1$ implies directional transience.
For $b <0$, we can take $\nu = 1+\gamma \in (0,\alpha)$ to see by~\eqref{eq:D2} that $D_2 (x) \leq - \eps$ for all $x$ outside
of a bounded set, so Foster's criterion (e.g.~Theorem~2.6.2 of~\cite{mpow}) gives positive recurrence.
This proves parts~(ii) and~(iii) of the theorem.
\end{proof}

\begin{proof}[Proof of Theorem~\ref{thm:moments-on-R-out}]
Suppose first that $\gamma = \alpha -1$. Then we have from~\eqref{eq:D2}
that
$D_2 (x) = \nu |x|^{\nu-\alpha} ( b + c \kappa_0 (\nu) + o(1))$.
This is the same asymptotics as for $D_0(x)$ in the proof of Theorem~\ref{thm:moments-on-RP},
and we can follow that proof with $Y_n = f_2 (\xi_n)$. Setting $b=0$ also gives the case $\gamma > \alpha -1$.
In the case $\gamma < \alpha -1$ and $b<0$, Lemma~\ref{lem:big-jump}
carries through for $x >0$, and a symmetric argument works for $x <0$.
\end{proof}

We turn to heavier inwards tails. Recall $\kappa_1$ and $\kappa_2$ from~\eqref{eq:kappa-1} and~\eqref{eq:kappa-2}.

\begin{proof}[Proof of Theorem~\ref{thm:recurrence-on-R-in}]
Now we have from Lemma~\ref{lem:lyapunov-R-in} that
\begin{align*}
 D_1 (x) & =  \left( \nu (b + o(1) ) x^{\nu-1-\gamma} + ( c +o(1) ) x^{\nu -\beta} \kappa_1 (\nu) \right), \text{ as } x \to +\infty, \\
 D_2 (x) & = \nu  \left( (b + o(1) ) |x|^{\nu-1-\gamma} + ( c +o(1) ) | x |^{\nu -\beta} \kappa_2 (\nu) \right),  \text{ as } x \to \pm \infty,\\
 D_1' (x) &  =  \left( \nu (b + o(1) ) x^{\nu-1-\gamma} + ( c +o(1) ) x^{\nu -\beta} \kappa_1 (\nu) \right), \text{ as } x \to +\infty .
\end{align*}
where $D_1'$ is as defined at~\eqref{eq:D1-dash}.
Suppose first that $\gamma = \beta -1$. Then
$D_2 (x) = \nu |x|^{\nu-\beta} ( b + c \kappa_2 (\nu) +o(1))$.
Using the fact that $\Gamma (\nu) \sim 1/\nu$ and 
$\Gamma (z +\nu)/\Gamma (z) = 1 + \nu \psi (z) + O (\nu^2)$
as $\nu \to 0$, 
we get
$\kappa_2 (\nu) =    \psi(1 -\beta) -\psi(\beta)   + O(\nu)$.
Here, $\psi(1-\beta)- \psi(\beta) = \pi \cot(\pi \beta)$.
So if $b + c \pi \cot (\pi \beta) < 0$, then we can find $\nu >0$
so that $D_2 (x) < 0$ for all $|x|$ sufficiently large, and Lemma~\ref{lem:rec-R}
implies recurrence. If $b + c \pi \cot (\pi \beta) > 0$, then we can find $\nu < 0$
so that again $D_2 (x) < 0$ for all $|x|$ sufficiently large, and Lemma~\ref{lem:trans-R}
implies transience (here we need~\eqref{ass:inwards-tail+} to allow us to take $\nu <0$
in Lemma~\ref{lem:lyapunov-R-in}). 
Moreover, $\kappa_1 (\nu) = -1 + O(\nu)$,
so that 
we can find $\nu > 0$
so that $D_1 (x) < 0$ and $D_1'(x) < 0$ for all $x$ sufficiently large,
and Lemma~\ref{lem:oscillatory-transience} shows that transience is oscillatory.
This proves parts~(iv) and~(vi) of the theorem, up to showing null recurrence
in part~(iv), which is covered by Theorem~\ref{thm:moments-on-R-in}.

Now suppose that $\gamma > \beta -1$. When $\beta < 3/2$, we can  simply
apply part~(vi) with $b=0$, and the fact that $\cot (\pi \beta) >0$ gives oscillatory transience.
For $\beta > 3/2$ we can apply part~(iv) to get null recurrence. Thus we verify parts~(i) and~(v) of the theorem.

Finally, suppose that $\gamma < \beta -1$.
Lemma~\ref{lem:big-drift}
shows that $D_1 (x) =  \nu b x^{\nu-\gamma-1} (1 +o(1))$,
for $\nu$ in an interval $(-\eps, \alpha)$,
and the same holds for $D_1'(x)$.
If $b<0$ we may take $\nu = 1 +\gamma < \alpha$ to get positive recurrence (e.g.~Theorem~2.6.2 of~\cite{mpow}),
establishing part~(ii).
If $b>0$ we may take $\nu <0$ to conclude directional transience
by an application of Lemma~\ref{lem:directional-transience} with function $f_1$, giving part~(iii).
\end{proof}

\begin{proof}[Proof of Theorem~\ref{thm:moments-on-R-in}]
Suppose first that $\gamma = \beta -1$. Then
$D_2 (x) = \nu |x|^{\nu-\beta} ( b + c \kappa_2 (\nu) + o(1))$.
The function $\kappa_2$ is continuous on $(-1,\beta)$
with 
$b + c\kappa_2 (0) =  b + c\pi \cot (\pi\beta) < 0$
and $\kappa_2 (\nu) \to +\infty$ as $\nu \to \beta$,
so the equation $b + c \kappa_2 (\nu) = 0$
has at least one solution in $(0,\beta)$.
Since $\kappa_2$ is analytic and non-constant on $[0,\beta)$,
the zeros of $b+c \kappa_2 (\nu)$ can accumulate only at $\nu =\beta$,
but this is ruled out since $\kappa_2 (\nu) \to +\infty$ as $\nu \to \beta$.
Thus there are finitely many $\nu \in (0,\beta)$ with $b + c \kappa_2 (\nu) = 0$.
Call the smallest one $\nu_\star$ and the largest $\nu^\star$.

To show that $\nu_\star = \nu^\star$, we adapt an idea from Lemma~11 in~\cite{mpew}.
Given $0 < \nu_1 \leq \nu_2 < \beta$,  by Jensen's inequality,
$\Exp_x [ (f_2^{\nu_1} (x + \theta))^{\nu_2/\nu_1} ] \geq \left( \Exp_x [ f_2^{\nu_1} (x +\theta ) ] \right)^{\nu_2/\nu_1}$, so that for $x \geq 1$,
\begin{align*}
f_2^{\nu_2} (x) + D_2^{\nu_2} (x) & \geq \left( f_2^{\nu_1} (x) + D_2^{\nu_1} (x) \right)^{\nu_2/\nu_1} \\
& = | x |^{\nu_2} \left( 1 + | x |^{-\nu_1} D_2^{\nu_1} (x) \right)^{\nu_2/\nu_1} \\
& = f_2^{\nu_2} (x) + \frac{\nu_2}{\nu_1} |x|^{\nu_2-\nu_1} D_2^{\nu_1} (x) + o ( |x|^{\nu_2 - \beta} ) ,
\end{align*}
using the fact that $D^\nu_2 (x) = \nu |x|^{\nu-\beta} ( b + c \kappa_2 (\nu) + o(1))$. Using this again and simplifying
we obtain $\kappa_2 (\nu_2) \geq \kappa_2 (\nu_1) + o(1)$, i.e., $\kappa_2 (\nu_2) \geq \kappa_2 (\nu_1)$.
Thus $\kappa_2$ is non-decreasing on $(0,\beta)$ and
so   $b + c\kappa_2 (\nu) = 0$ for all $\nu_\star \leq \nu \leq \nu^\star$. Thus we must have $\nu_\star=\nu^\star$.

Hence $b + c \kappa_2 (\nu) < 0$ for $\nu \in (0,\nu^\star)$
and $b + c \kappa_2 (\nu) >0$ for $\nu \in (\nu^\star,\beta)$.
The proof of part~(ii) of the theorem now follows that of Theorem~\ref{thm:moments-on-RP}(ii),
setting $Y_n = f_2 (\xi_n)$. 

If $\gamma > \beta -1$, we may apply the $b=0$ case of part~(ii),
assuming $\beta > 3/2$ so that $\cot (\pi\beta) < 0$.
We observe that $\kappa_2 (2\beta -3) = 0$,
so $\nu^\star = 2\beta -3$.
This gives part~(i).

Finally, if $\gamma < \beta -1$
 and $b<0$ we get from~\eqref{eq:D2} that
$D_2 (x) = \nu |x|^{\nu-\gamma-1} ( b + o(1))$ for $\nu \in (0,\beta)$,
and following the argument in the proof of Theorem~\ref{thm:moments-on-RP}(iii)
gives the existence-of-moments result. For the non-existence result,
 the argument for Lemma~\ref{lem:big-jump}
works with minor modifications: the big jump being taken in the other direction.
\end{proof}

Finally, we turn to the balanced case.

\begin{proof}[Proof of Theorem~\ref{thm:recurrence-on-R-bal}]
We give a sketch of the argument in the critical case where $\gamma = \alpha -1$.
For transience and recurrence, we use $f_2$ and get from Lemma~\ref{lem:lyapunov-R-bal}
that the sign of $D_2(x)$ for $\nu \approx 0$ and large $|x|$ is determined by
\[ b + c (\kappa_0 (0) + \kappa_2 (0) ) = b + c \pi ( \cosec (\pi \alpha ) + \cot (\pi \alpha ) ) = b + c \pi \cot \big( \tfrac{\pi \alpha}{2} \big) .
\]
In the critical case transience is again oscillatory, since in $D_1(x)$
the key quantity is $\nu (b + c\kappa_0 (0)) + \kappa_1 (0)$ which is $-1 + O(\nu)$.
We omit the details, as they are similar to the previous proofs.
%and the proximity of Theorem~\ref{thm:recurrence-on-R-bal} to the results in~\cite{sandricB,sandricT}.
\end{proof}

\begin{proof}[Proof of Theorem~\ref{thm:moments-on-R-bal}]
Omitted; again similar to previous proofs.
\end{proof}

\subsection{Example in the plane}

\begin{proof}[Proof of Theorem~\ref{thm:R2-example}]
For $\nu \in \R$ we define the Lyapunov function $g : \R^2 \to \RP$ by
\[ g (\bx) := \begin{cases} \| \bx \|^{\nu} & \text{for } \| \bx \| \geq 1, \\
1 & \text{for } \| \bx \| < 1 .
\end{cases} \]
Suppose throughout that $\| \bx \| \geq 1$.
Using assumption~\eqref{ass:R2-supp}, we can write 
\[ \Exp [ g (\xi_{n+1} ) - g (\xi_n) \mid \xi_n = \bx ] = p^\cR \Exp_\bx [ g (\bx + \bu_\bx \theta^\cR) - g(\bx) ]
+ p^\cT \Exp_\bx [ g (\bx + \bv_\bx \theta^\cT) - g(\bx) ] .\]
Considering first the case of a radial jump, since $\bx = \| \bx \| \bu_\bx$, we have
\[ \Exp_\bx [ g (\bx + \bu_\bx \theta^\cR) - g(\bx) ] = \Exp_\bx [ (\|\bx\| + \theta^\cR)^\nu - \| \bx \|^\nu ] .\]
In the same way that Lemma~\ref{lem:lyapunov-R-out} follows from
the assumptions~\eqref{ass:tails-on-R-out} and~\eqref{ass:R-drift} 
of Section~\ref{sec:R-results1}, here assumptions~\eqref{ass:R2-mart}, \eqref{ass:R2-rad-out}, and~\eqref{ass:R2-rad-in} imply that
for any $\nu \in (\alpha-\beta,\alpha)$,
\[
\Exp_\bx [ g (\bx + \bu_\bx \theta^\cR) - g(\bx) ] = c^\cR
\nu \|\bx \|^{\nu-\alpha} \kappa_0 (\alpha,\nu) + o(\| \bx \|^{\nu-\alpha}).
\]
(Note that the process $\| \xi_n \|$ does not itself satisfy assumptions~\eqref{ass:tails-on-R-out} and~\eqref{ass:R-drift} of
Section~\ref{sec:R-results1}, as the law of its increments is not sufficiently uniform in $\bx$.)

We  turn to the transverse jumps. Let $\eps \in (0,2-\alpha)$.
Since $\| \bx + \bv_\bx \theta^\cT \|^2 = \| \bx \|^2 + | \theta^\cT |^2$, 
\begin{align*}
\left| g(\bx + \bv_\bx \theta^\cT ) - g (\bx ) \right| \1 { | \theta^\cT |^2 \leq \| \bx \|^\eps }
& = \| \bx \|^\nu \left[ \left( 1 + \| \bx \|^{-2} | \theta^\cT |^2 \right)^{\nu/2} - 1 \right]  \1 { | \theta^\cT |^2 \leq \| \bx \|^\eps } ,
\end{align*}
which is bounded by a constant times $\| \bx \|^{\nu +\eps -2}$ for all $\| \bx \| \geq 1$.
Thus, by our choice of $\eps$,
\begin{equation}\label{eqn:trans-less-x-to-eps}
\Exp_\bx [ | g (\bx + \bv_\bx \theta^\cT ) - g(\bx) | \1 { | \theta^\cT |^2 \leq \| \bx \|^\eps } ] =
o(\|\bx\|^{\nu-\alpha}) .
\end{equation}
The function $g_\bx : \RP \to \R$ defined by $g_\bx(y) = (\|\bx\|^2 + y)^{\nu/2} - \|\bx\|^\nu$ is 
differentiable and monotone on $\RP$
with $g_\bx (y^2) = g( \bx + \bv_\bx y ) - g(\bx )$,  so Lemma~\ref{lem:exp-tail} implies that
\begin{align}
\label{eq:transverse-big}
& {} \Exp_\bx [ ( g (\bx  + \bv_\bx \theta^\cT ) - g(\bx) ) \1 { | \theta^\cT |^2 > \| \bx \|^\eps } ] \nonumber \\
& \qquad {} =g_\bx(\|\bx\|^\eps) \Pr_\bx [ | \theta^\cT |^2 > \|\bx\|^\eps] +
\int_{\|\bx\|^\eps}^\infty \frac{\nu}{2}(\|\bx\|^2 + y )^{(\nu/2)-1} \Pr_\bx[| \theta^\cT |^2> y]\ud y.
\end{align}
Similarly to~\eqref{eqn:trans-less-x-to-eps}, we have $g_\bx(\|\bx\|^\eps) = o(\|\bx\|^{\nu-\alpha})$. 
For the above integral, assumption~\eqref{ass:R2-trans} and the substitution $y =
\|\bx\|^2 u$ shows that~\eqref{eq:transverse-big} equals
\[
c^\cT \nu 
\|\bx\|^{\nu-\alpha}  \int_0^\infty (1+u)^{(\nu/2)-1} u^{-\alpha/2} \ud u +
o(\|\bx\|^{\nu-\alpha}),
\]
provided that $\alpha \in (1,2)$ and $\nu < \alpha$ so that the latter integral is convergent. Using the substitution $t = (1+u)^{-1}$, 
and combining the result with~\eqref{eqn:trans-less-x-to-eps}, gives
\[ \Exp_\bx [   g (\bx  + \bv_\bx \theta^\cT ) - g(\bx)    ]
= c^\cT  \nu \|\bx\|^{\nu-\alpha} ( 1 + o(1) ) \int_0^1 t^{\frac{\alpha-\nu}{2}-1}(1-t)^{-\alpha/2}\ud t .\]
Therefore, recalling the beta integral and the definition of $\kappa_0$ from~\eqref{eq:kappa-0}, we obtain
\[
 \Exp [ g (\xi_{n+1} ) - g (\xi_n) \mid \xi_n = \bx ] = \nu \|\bx \|^{\nu-\alpha}
\left(
p^\cR c^\cR
 \kappa_0 (\alpha,\nu) 
+  p^\cT c^\cT   \kappa_0 (\alpha/2,\nu/2)
+ o( 1) \right).
\]
We saw in the proof of Theorem~\ref{thm:recurrence-on-RP} that $\kappa_0 (\alpha,0) = \pi \cosec (\pi \alpha)$,
and using the fact that $\cosec (a) = 2 \cosec(2a) \cos(a)$,
 we get that the leading order coefficient   is
\[   \nu \pi \cosec( \pi \alpha) \left( p^\cR c^\cR  
+ 2 p^\cT c^\cT  \cos ( \tfrac{\pi\alpha}{2} ) 
  \right) + o(\nu) , \text{ as } \nu \to 0. \]
	
Since $\cosec(\pi \alpha) < 0$ for $\alpha \in (1,2)$, if $p^\cR c^\cR  + 2 p^\cT c^\cT
\cos(\pi\alpha/2) > 0$ then there exists a $\nu > 0$ for which
$\Exp [ g (\xi_{n+1} ) - g (\xi_n) \mid \xi_n = \bx ] \leq 0$ for all large enough $\|\bx\|$, and 
 we get the recurrence in part~(i) of the theorem by Lemma~\ref{lem:rec-RP} applied to $X_n = \| \xi_n \|$
with $f (x) = f_0 (x)$ (which tends to $\infty$).
On the other hand, if $p^\cR c^\cR + 2 p^\cT c^\cT \cos(\pi\alpha/2) < 0$, then there exists
a $\nu < 0$  for which again $\Exp [ g (\xi_{n+1} ) - g (\xi_n) \mid \xi_n = \bx ] \leq 0$  for all $\|\bx\|$
large enough,  
and we get the transience in part~(ii) of the theorem by Lemma~\ref{lem:trans-RP}
applied to  $X_n = \| \xi_n \|$
with $f (x) = f_0 (x)$ (which now tends to $0$). 

Finally, existence and non-existence of moments in the recurrent case follow by a 
similar argument to that in the proof of Theorem~\ref{thm:moments-on-RP} in
Section~\ref{sec:half-line-proofs}: since for any $\alpha \in (0,1) \cup (1,2)$ the function $\nu \mapsto \kappa_0 (\alpha,\nu)$ is
continuously increasing
on $[0,\alpha)$,
the expression
\[
p^\cR c^\cR
 \kappa_0 (\alpha,\nu) 
+  p^\cT c^\cT   \kappa_0 (\alpha/2,\nu/2)
\]
is finite and continuously increasing in $\nu$ on $[0,\alpha)$,
 by assumption it is negative for
$\nu = 0$, and it is clearly positive for $\nu = 1$. Hence there is a unique
$\nu^\star \in (0,1)$ that solves~\eqref{eqn:R2-nu-star}.  Applying
Lemmas~\ref{lem:finite-moments} and~\ref{lem:infinite-moments}, it is possible to show
that $\Exp_\bx[\tau_a^q] < \infty$ for $q < \nu^\star/\alpha$, and
$\Exp_\bx[\tau_a^q] = \infty$ for $q > \nu^\star/\alpha$.  In particular,
$\Exp_\bx[\tau_a] = \infty$ and $\Xi$ is null recurrent.
\end{proof}

\appendix

\section{An integration by parts formula}
\label{sec:parts}

The following lemma, 
based on the partial integration formula for the Riemann--Stieltjes integral, is 
a mild generalization of Theorem~2.12.3 of~\cite{gut}.

\begin{lemma}\label{lem:exp-tail}
Let $X \geq 0$ be a random variable, and $g : \RP \to \R$ a continuous function.
Let the finite partition $a_0 = 0 < a_1 < \cdots < a_{k+1} = \infty$ of
$\RP$ be such that $g$ is monotonic on $[a_i,a_{i+1})$ and differentiable on $(a_i,a_{i+1})$
for each $i=0,\dots,k$.  Then, for any $a \geq 0$,
\[
\Exp[g(X)\1{X>a}] = g(a)\Pr[X>a] + \int_a^\infty g'(x)\Pr[X> x]  \ud x,
\]
where both sides converge or diverge simultaneously.  
\end{lemma}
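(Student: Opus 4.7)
The plan is to reduce to the case of a single monotonic piece and then apply the classical Riemann--Stieltjes integration by parts formula, summing via telescoping over the partition pieces.

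First, without loss of generality I would augment the partition by inserting the point $a$, so I may assume $a = a_j$ for some $0 \leq j \leq k$; this only subdivides an interval on which $g$ is monotonic and differentiable, so all the hypotheses are preserved. Writing $\bar{F}(x) := \Pr[X > x]$ and $F := 1 - \bar{F}$, I decompose
\[
\Exp[g(X)\1{X > a}] = \sum_{i=j}^{k} \int_{(a_i, a_{i+1}]} g(x) \, \ud F(x),
\]
with the convention that the final term involves an interval of the form $(a_k, \infty)$, to be handled as a limit.

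Next, I would apply Riemann--Stieltjes integration by parts on each bounded piece $[a_i, a_{i+1}]$ (for $i < k$). Since $g$ is continuous and of bounded variation there (being monotonic), and $F$ is non-decreasing, the formula
\[
\int_{(a_i, a_{i+1}]} g \, \ud F = g(a_{i+1}) F(a_{i+1}) - g(a_i) F(a_i) - \int_{a_i}^{a_{i+1}} F(x) \, \ud g(x)
\]
holds; using $F = 1 - \bar{F}$ and $\int_{a_i}^{a_{i+1}} \ud g = g(a_{i+1}) - g(a_i)$ this simplifies to
\[
\int_{(a_i, a_{i+1}]} g \, \ud F = g(a_i) \bar{F}(a_i) - g(a_{i+1}) \bar{F}(a_{i+1}) + \int_{a_i}^{a_{i+1}} g'(x) \bar{F}(x) \, \ud x,
\]
where differentiability of $g$ on $(a_i, a_{i+1})$ lets me replace $\ud g(x)$ by $g'(x)\, \ud x$.

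Then I would sum over $i = j, \ldots, k-1$; the boundary terms telescope, leaving
\[
\sum_{i=j}^{k-1} \int_{(a_i, a_{i+1}]} g \, \ud F = g(a_j) \bar{F}(a_j) - g(a_k)\bar{F}(a_k) + \int_{a_j}^{a_k} g'(x) \bar{F}(x) \, \ud x.
\]
For the final piece $(a_k, \infty)$, the same integration by parts on each $(a_k, M]$ yields
\[
\int_{(a_k, M]} g \, \ud F = g(a_k) \bar{F}(a_k) - g(M) \bar{F}(M) + \int_{a_k}^{M} g'(x) \bar{F}(x) \, \ud x,
\]
and I would now let $M \to \infty$. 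Since $g'$ has a constant sign on $(a_k, \infty)$, the integral $\int_{a_k}^{M} g'(x)\bar F(x) \, \ud x$ is monotone in $M$, and similarly $\int_{(a_k, M]} g \, \ud F$ converges monotonically (up to a finite additive constant) to $\Exp[g(X)\1{X > a_k}]$; so the identity $g(M) \bar{F}(M) \to 0$ follows automatically from convergence of either side, and the two sides converge or diverge together. Adding this to the telescoped sum and substituting $a_j = a$ gives the claimed identity.

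The main subtlety is the last step: justifying that the boundary term at infinity vanishes and that convergence/divergence is equivalent on the two sides. Everything else is routine manipulation, and the monotonicity of $g$ on $(a_k, \infty)$ (forcing $g'\bar F$ to have constant sign) is precisely what lets me invoke monotone convergence to close the argument.
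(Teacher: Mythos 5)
Your overall structure matches the paper's: reduce to the partition pieces, apply classical Riemann--Stieltjes integration by parts on each, telescope the boundary terms, and handle the unbounded tail piece by a limit $M \to \infty$. The gap is in your final step. You assert that ``the identity $g(M)\bar F(M)\to 0$ follows automatically from convergence of either side,'' but this is not automatic --- it is essentially equivalent to the identity you are trying to prove. From the finite-$M$ formula, if both $\int_{(a_k,M]} g\,\ud F$ and $\int_{a_k}^M g'\bar F\,\ud x$ converge to finite limits $E$ and $I$, then $g(M)\bar F(M)$ converges to $L := g(a_k)\bar F(a_k) + I - E$, and the statement $L=0$ is precisely the lemma; you cannot deduce it from convergence alone. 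What is actually needed is a separate tail estimate: normalizing so that $g\ge 0$ is non-decreasing on $[a_k,\infty)$ with $g(a_k)=0$, one has $0 \le g(M)\bar F(M) = g(M)\Pr[X>M] \le \Exp[g(X)\1{X>M}]$, which tends to $0$ when $\Exp[g(X)\1{X>a_k}]<\infty$; combined with monotonicity of both sides, this closes the argument and also gives the divergence-together statement. The paper packages this observation as a two-sided squeeze
\[
\Exp[g(X)\1{X>a}] \ \ge\ \Exp[g(X)\1{a<X\le b}] + g(b)\Pr[X>b] \ \ge\ \Exp[g(X)\1{a<X\le b}],
\]
which, after applying the bounded-interval integration by parts to the middle term, identifies that middle term with $g(a)\Pr[X>a]+\int_a^b g'\bar F$ and sandwiches it between two quantities with the same $b\to\infty$ limit. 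If you add the tail bound $g(M)\bar F(M)\le \Exp[g(X)\1{X>M}]$ (or adopt the squeeze), your proof is complete and coincides with the paper's.
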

\begin{proof}
By assumption, for each $i=0,\dots,k-1$, the function $g$
is continuous on $[a_i,a_{i+1}]$  and $\Exp[g(X) \1{a_i < X \leq a_{i+1}}]$ is
finite and, using e.g.~Theorem~2.9.3 of~\cite{gut}, is equal to
\[\begin{split}
\int_{a_i}^{a_{i+1}} g(x) \ud F_X(x) &= g(a_{i+1})F_X(a_{i+1}) -g(a_i)F_X(a_i) -
\int_{a_i}^{a_{i+1}} F_X(x) g'(x) \ud x\\
&= g(a_i)\Pr[X > a_i] - g(a_{i+1})\Pr[X > a_{i+1}] + \int_{a_i}^{a_{i+1}} g'(x) \Pr[X>x]
\ud x.
\end{split}
\]
If $a< a_k$ then $a \in [a_j,a_{j+1})$ for some $j<k$, and similar reasoning gives
\[
\Exp[g(X) \1{a < X \leq a_{j+1}}] = g(a)\Pr[X > a] - g(a_{j+1})\Pr[X > a_{j+1}] +
\int_{a}^{a_{j+1}} g'(x) \Pr[X>x] \ud x.
\]
Consequently, the result follows if
\[
\Exp[g(X)\1{X>a_k}] = g(a_k)\Pr[X>a_k] + \int_{a_k}^\infty g'(x)\Pr[X> x]  \ud x
\]
and both sides converge or diverge simultaneously.  In other words, without loss of
generality, we may assume that $a \geq a_k$.

Under this assumption, we may also assume that $g$ is non-negative and non-decreasing on
$[a,\infty)$, by passing to $\pm(g(x) - g(a))$ as necessary, and therefore for all $b>
a$,
\[
 \Exp[g(X) \1{X>a}] \geq \Exp[g(X) \1{a<X\leq b}] + g(b) \Pr[X>b] \geq  \Exp[g(X) \1{a<X\leq b}].
\]
Applying the partial integration formula to the middle term above, we obtain
\[
 \Exp[g(X) \1{X>a}] \geq \int_a^b g'(x) \Pr[X>x]\ud x +  g(a)\Pr[X>a] \geq  \Exp[g(X) \1{a<X\leq b}].
\]
and the result follows by taking the limit $b \to \infty$, since the right hand side tends
to the (possibly infinite) limit $\Exp[g(X)\1{X>a}]$ by monotone convergence.
\end{proof}

\section{Some useful integrals}
\label{sec:integrals}

Recall that the beta function
$B (p,q)$ is given by
\[ B(p,q ) = \frac{\Gamma(p)\Gamma(q)}{\Gamma(p+q)} = B(q,p) .\]
The \emph{incomplete beta function} $B_x(p,q)$, defined for $0 \leq x \leq 1$ by the integral
\begin{equation}
\label{eq:beta-integral}
B_x(p,q) := \int_0^x u^{p-1} (1-u)^{q-1} \ud u,
\end{equation}
is usually only defined for $p,q > 0$, in which case $B_1 (p,q)= B(p,q)$.
However, the integral in~\eqref{eq:beta-integral} is finite for \emph{any} $q \in \R$ provided that
$x <1$ and $p> 0$, so we extend the definition of $B_x(p,q)$ to this full range of the parameters.
Note that then
\begin{equation}\label{eqn:beta-limit}
\lim_{x\uparrow 1} B_x(p,q) = \begin{cases}
B(p,q) & \text{if $q>0$,}\\
\infty & \text{if $q\leq 0$.}
\end{cases}
\end{equation}
The following recurrence relation
is well known, but since it is usually assumed that $q>0$, we repeat the proof here.
\begin{lemma}\label{lem:B-recur}
For $0\leq x< 1$, $p > 0$, and $q\in \R$,
\[
q B_x(p,q) = (p+q) B_x(p,q+1) - x^p(1-x)^q.
\]
\end{lemma}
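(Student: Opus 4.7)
The plan is to prove the identity by integration by parts, taking advantage of the restriction $0 \le x < 1$, which keeps all integrands bounded away from the singularity of $(1-u)^q$ at $u=1$, and the restriction $p > 0$, which handles the (possible) singularity of $u^{p-1}$ at $u=0$. Under these restrictions, both $B_x(p,q)$ and $B_x(p,q+1)$ are finite for every $q \in \R$, so all quantities in the claimed identity make sense.

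Concretely, I would start from
\[
B_x(p,q+1) = \int_0^x u^{p-1} (1-u)^q \ud u,
\]
and integrate by parts with $\ud v = u^{p-1} \ud u$, $v = u^p/p$, and $w = (1-u)^q$, $\ud w = -q(1-u)^{q-1}\ud u$. The boundary term at $u=0$ vanishes because $p>0$, and the boundary term at $u=x$ yields $\frac{1}{p} x^p (1-x)^q$. This gives
\[
B_x(p,q+1) = \frac{x^p (1-x)^q}{p} + \frac{q}{p} \int_0^x u^p (1-u)^{q-1}\ud u.
\]

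The next step is to rewrite the remaining integral in terms of $B_x(p,q)$ and $B_x(p,q+1)$ using the algebraic identity $u = 1 - (1-u)$, so that
\[
\int_0^x u^p (1-u)^{q-1} \ud u = \int_0^x u^{p-1}(1-u)^{q-1} \ud u - \int_0^x u^{p-1}(1-u)^q \ud u = B_x(p,q) - B_x(p,q+1).
\]
Substituting this back and multiplying through by $p$ produces
\[
p B_x(p,q+1) = x^p (1-x)^q + q B_x(p,q) - q B_x(p,q+1),
\]
which rearranges immediately to the desired identity.

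There is no real obstacle here: the only point to be careful about is verifying that integration by parts is legitimate for arbitrary real $q$, which is guaranteed by the fact that $(1-u)^q$ is $C^1$ on $[0,x]$ for any $x<1$ and that $u^{p-1}$ is integrable near $0$ for $p>0$. Thus all integrals are absolutely convergent, the boundary terms are finite, and the manipulations above are valid without any restriction on the sign of $q$.
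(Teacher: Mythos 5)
Your proof is correct and uses essentially the same two ingredients as the paper's: one integration by parts together with the elementary splitting $1 = u + (1-u)$, merely applied in the opposite order (you integrate $B_x(p,q+1)$ by parts and then split $\int_0^x u^p(1-u)^{q-1}\,\ud u$, whereas the paper first writes $B_x(p,q) = B_x(p,q+1) + B_x(p+1,q)$ and then integrates $B_x(p+1,q)$ by parts). The justification you give for the validity of the integration by parts for arbitrary real $q$ — namely $x<1$ and $p>0$ — is exactly the point the paper is making by restating this standard recurrence.
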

\begin{proof}
Since $p>0$, we can write $B_x(p,q) = B_x(p,q+1) + B_x(p+1,q)$, where all three terms are
finite.  Evaluating $q B_x(p+1,q)$ using integration by parts, yields
\[
q B_x(p+1,q) = -x^p(1-x)^q  + p B_x(p,q+1),
\]
which when combined with the previous identity gives the result.
\end{proof}

\begin{lemma}\label{lem:beta-const}
For $0 \leq x < 1$, $p>-1$, and $q\in \R$,
\[
p \int_0^x u^{p-1}( (1-u)^{q-1} - 1)\ud u = (p+q) B_x(p+1,q) + x^p((1-x)^q-1).
\]
Moreover, for $p> -1$, $p\neq 0$, and $q > 0$,
\begin{equation}\label{eqn:beta-const}
\lim_{x\uparrow 1} \int_0^x u^{p-1}( (1-u)^{q-1} - 1)\ud u = (p+q)\frac{\Gamma(p)\Gamma(q)}{\Gamma(p+q+1)} - \frac{1}{p}.
\end{equation}
\end{lemma}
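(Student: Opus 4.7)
The plan is to prove the identity by a single integration by parts that reduces matters to Lemma~\ref{lem:B-recur}, and then to deduce the limit by letting $x \uparrow 1$ and invoking~\eqref{eqn:beta-limit}.

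For the identity, I would integrate $p\int_0^x u^{p-1}((1-u)^{q-1}-1)\,\ud u$ by parts using $\ud(u^p) = p u^{p-1}\,\ud u$. The boundary contribution at $u=x$ is $x^p((1-x)^{q-1}-1)$. The boundary term at $u=0$ vanishes because $(1-u)^{q-1}-1 = O(u)$ as $u\to 0$ (this is precisely the role of the $-1$ in the integrand), so $u^p((1-u)^{q-1}-1) = O(u^{p+1})\to 0$ under the hypothesis $p>-1$. The remaining integral becomes $(q-1)\int_0^x u^p(1-u)^{q-2}\,\ud u = (q-1) B_x(p+1,q-1)$, yielding
\[
p\int_0^x u^{p-1}\bigl((1-u)^{q-1}-1\bigr)\,\ud u = x^p\bigl((1-x)^{q-1}-1\bigr) + (q-1)B_x(p+1,q-1).
\]
Then I would apply Lemma~\ref{lem:B-recur} with $(p,q)$ replaced by $(p+1,q-1)$ to rewrite $(q-1)B_x(p+1,q-1) = (p+q)B_x(p+1,q) - x^{p+1}(1-x)^{q-1}$. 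Collecting boundary contributions via $x^p(1-x)^{q-1} - x^{p+1}(1-x)^{q-1} = x^p(1-x)^q$ delivers the first displayed identity.

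For the limit statement, I would pass $x\uparrow 1$ in the identity just derived. Since $q>0$, \eqref{eqn:beta-limit} gives $B_x(p+1,q)\to B(p+1,q) = \Gamma(p+1)\Gamma(q)/\Gamma(p+q+1)$, while $x^p((1-x)^q-1)\to -1$. Dividing by the nonzero $p$ and using $\Gamma(p+1) = p\,\Gamma(p)$ (valid for all $p>-1$ with $p\neq 0$, since $\Gamma$ has no pole in that range) produces $(p+q)\Gamma(p)\Gamma(q)/\Gamma(p+q+1) - 1/p$, as claimed in~\eqref{eqn:beta-const}.

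The only delicate point is the integration by parts itself: one cannot naively split the integrand as $u^{p-1}(1-u)^{q-1} - u^{p-1}$ when $-1<p\leq 0$, because both pieces would then diverge at the origin. The subtraction of $1$ must be kept inside the integrand and the boundary behaviour at $0$ checked on the combined expression. It is also worth noting that Lemma~\ref{lem:B-recur} is applied at the second argument $q-1$, which may be non-positive; this is legitimate because that lemma is stated (and we may assume proved) for $q\in\mathbb{R}$ using the extension of $B_x$ described just above it. Everything else is routine gamma-function algebra.
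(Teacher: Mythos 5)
Your proof is correct and follows essentially the same route as the paper's: a single integration by parts producing the boundary term $x^p((1-x)^{q-1}-1)$ and the integral $(q-1)B_x(p+1,q-1)$, followed by an application of Lemma~\ref{lem:B-recur}, and then taking $x\uparrow 1$ via~\eqref{eqn:beta-limit} and $p\Gamma(p)=\Gamma(p+1)$. The remark at the end correctly identifies why the $-1$ must be kept inside the integrand when $-1<p\leq 0$, which matches the paper's motivation for the lemma.
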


\begin{remark}
The identity~\eqref{eqn:beta-const} is easily proved when $p>0$ and $q>0$
(direct evaluation with the beta integral) or when $p>-1$, $p\neq 0$ and $q>1$ (integration by parts); however, the
region $-1<p<0$ and $0<q<1$ is not covered by either of these cases.
\end{remark}
\begin{proof}
Let $\phi (u) = u^{p-1}( (1-u)^{q-1} - 1)$.
Taylor's theorem around $u =0$ gives an $\eps \in (0,x)$
such that $| \phi (u) |$ is bounded by a constant times $u^p$
for $u \in [0,\eps]$, while, as $x<1$,
 $| \phi (u) |$ is uniformly bounded for $u \in [\eps, x]$.
Thus, since $p>-1$, the integral is finite.
Integrating by parts, noting that $\lim_{ u \to 0} u^p ( (1-u)^{q-1} -1 ) = 0$ since $p>-1$,
we get
\[
p\int_0^x u^{p-1}( (1-u)^{q-1} - 1)\ud u = x^p((1-x)^{q-1} - 1) +
(q-1) B_x(p+1,q-1),
\]
and applying Lemma~\ref{lem:B-recur} yields the claimed identity.
When $q>0$, by the first part of the lemma and~\eqref{eqn:beta-limit},
the integral $\int_0^x \phi (u) \ud u$ converges as
$x\to 1$ to a finite limit, which equals $\frac{p+q}p
B(p+1,q) - 1/p$, and gives the claimed expression since $p\Gamma(p) = \Gamma(p+1)$.
\end{proof}

\begin{lemma}\label{lem:beta-linear}
For  $0 \leq x < 1$, $p>-1$, and $q\in \R$,
\begin{align*}
& {} p(p-1) \int_0^x u^{p-2}((1-u)^q+qu-1)\ud u = (p+q)(p+q+1)B_x(p+1,q+1)\\
& {}
 \qquad \qquad\qquad \qquad\qquad \qquad {}
+ x^{p-1}((1-x)^{q+1}(p+(p+q)x) +(p-1)qx-p).
\end{align*}
Moreover, for $p>-1$, $p\notin \{0,1\}$, and $q > -1$,
\begin{equation}\label{eqn:beta-linear}
\lim_{x\uparrow 1} \int_0^x u^{p-2}((1-u)^q+qu-1)\ud u = (p+q)(p+q+1)\frac{\Gamma(p-1)\Gamma(q+1)}{\Gamma(p+q+2)} +
 \frac{q}{p} - \frac{1}{p-1}.
\end{equation}
\end{lemma}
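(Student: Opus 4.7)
The plan is to prove the first identity by two successive integrations by parts, which reduces the integral to a term involving $B_x(p+1,q-1)$; this is then shifted to $B_x(p+1,q+1)$ via two applications of the recurrence in Lemma~\ref{lem:B-recur}. The limit as $x \uparrow 1$ then follows as in Lemma~\ref{lem:beta-const}.

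First I would verify integrability on $(0,x]$: Taylor expansion gives $g(u) := (1-u)^q + qu - 1 = \tfrac{q(q-1)}{2}u^2 + O(u^3)$ near~$0$, so $u^{p-2}g(u) = O(u^p)$, which is integrable for $p > -1$. Crucially, $g(0) = g'(0) = 0$ and $g''(u) = q(q-1)(1-u)^{q-2}$. For $p \notin \{0,1\}$, I would integrate by parts twice, antidifferentiating $u^{p-2}$ to $u^{p-1}/(p-1)$ and then $u^{p-1}$ to $u^p/p$. The boundary terms at $0$ vanish because $u^{p-1}g(u) = O(u^{p+1})$ and $u^p g'(u) = O(u^{p+1})$ both tend to $0$ for $p > -1$. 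This gives
\[
p(p-1) \int_0^x u^{p-2} g(u)\,\ud u = p x^{p-1} g(x) - x^p g'(x) + q(q-1) B_x(p+1, q-1),
\]
where $B_x(p+1,q-1)$ is finite since $p+1 > 0$ and $x < 1$. Then I would apply Lemma~\ref{lem:B-recur} twice (first with arguments $(p+1,q-1)$ and then with $(p+1,q)$) to obtain
\[
q(q-1) B_x(p+1, q-1) = (p+q)(p+q+1) B_x(p+1, q+1) - (p+q) x^{p+1}(1-x)^q - q x^{p+1}(1-x)^{q-1}.
\]

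The next step is to substitute $g(x) = (1-x)^q + qx - 1$ and $g'(x) = q - q(1-x)^{q-1}$ and collect the boundary terms. The key algebraic identity
\[
(1-x)^{q+1}\bigl(p + (p+q)x\bigr) = (1-x)^q \bigl(p + qx - (p+q)x^2\bigr),
\]
which follows by expanding $(1-x)^{q+1} = (1-x)(1-x)^q$, allows the separate $(1-x)^q$ and $(1-x)^{q-1}$ contributions to collapse into the single $(1-x)^{q+1}$ term appearing in the claim. Since both sides of the first identity are continuous in $p$ on $(-1,\infty)$ (for fixed $x \in [0,1)$ and $q \in \R$), the identity extends from $p \notin \{0,1\}$ to all $p > -1$ by continuity. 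For the limit, when $q > -1$ the factor $(1-x)^{q+1}$ vanishes as $x \uparrow 1$, while $B_x(p+1,q+1) \to B(p+1,q+1) = \Gamma(p+1)\Gamma(q+1)/\Gamma(p+q+2)$ by~\eqref{eqn:beta-limit} (using $p+1>0$ and $q+1>0$); using $\Gamma(p+1) = p(p-1)\Gamma(p-1)$ to simplify the coefficient then produces~\eqref{eqn:beta-linear}.

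The main obstacle is the algebraic bookkeeping required to recombine the boundary terms into the compact form stated; otherwise the argument is essentially mechanical, being a direct extension of the one-fold integration by parts used in Lemma~\ref{lem:beta-const} to exploit the additional vanishing $g'(0) = 0$ that makes the second integration by parts available.
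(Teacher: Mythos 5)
Your argument is correct and is essentially the paper's: the paper integrates by parts once and then invokes Lemma~\ref{lem:beta-const} followed by Lemma~\ref{lem:B-recur}, whereas you integrate by parts a second time and apply Lemma~\ref{lem:B-recur} twice, which amounts to inlining the proof of Lemma~\ref{lem:beta-const}. The boundary-term bookkeeping, the reduction to $B_x(p+1,q+1)$, and the passage to the limit via~\eqref{eqn:beta-limit} and $\Gamma(p+1)=p(p-1)\Gamma(p-1)$ all check out.
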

\begin{proof}
The indefinite integral is finite for $x<1$ and $p>-1$, by a similar argument
to that in the proof of the Lemma~\ref{lem:beta-const}, since the integrand behaves like
$u^p$ as $u\to 0$.  Then, integrating by parts yields
\[
(p-1)\int_0^x u^{p-2}((1-u)^q+qu-1)\ud u = x^{p-1}((1-x)^q+qx-1) + q  \int_0^x u^{p-1}(
(1-u)^{q-1} - 1)\ud u, 
\]
and the required identity follows after applying Lemma~\ref{lem:beta-const} and then Lemma~\ref{lem:B-recur}.
When $q>-1$, using the first part of the lemma  and~\eqref{eqn:beta-limit},
the integral $\int_0^x u^{p-2}((1-u)^q+qu-1)\ud u$ converges as $x\to 1$ and, provided $p \notin \{0,1\}$,
this limit equals
\[
\frac{(p+q)(p+q+1)}{p(p-1)} B(p+1,q+1) + \frac{q}{p} - \frac{1}{p-1},
\]
and the result follows since $p(p-1) \Gamma(p-1) = \Gamma(p+1)$.
\end{proof}

We can now collect the statements that we will need in the body of the paper. 
The first, Corollary~\ref{cor:int-negative-part-to-x-1}, is
for  $q>0$ no stronger than the identity~\eqref{eqn:beta-const} above, but for $q \in (-1,0)$ 
it
quantifies the rate of divergence of the integral as $x\to \infty$.

\begin{corollary}\label{cor:int-negative-part-to-x-1}
Let $p>-1$, $p \neq 0$ and $q>-1, q\neq 0$. As $x \to \infty$,
\[
\int_0^{1-x^{-1}} u^{p-1} ((1-u)^{q-1}-1) \ud u =
-\dfrac{x^{-q}}{q} + (p+q)(p+q+1)\frac{\Gamma(p)\Gamma(q)}{\Gamma(p+q+2)} - \frac{1}{p} + o(1).
\]
\end{corollary}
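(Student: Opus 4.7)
The plan is to reduce the claim to Lemma~\ref{lem:beta-linear} via one integration by parts that isolates the divergent $-x^{-q}/q$ contribution. A unified argument should work for both $q>0$ and $q\in(-1,0)$: in the former range, $-x^{-q}/q$ is itself $o(1)$ and the statement reduces to~\eqref{eqn:beta-const}; in the latter, it captures the genuine rate of divergence.

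First, I would take the explicit antiderivative $G(u) := \frac{1-(1-u)^q}{q} - u$ of $(1-u)^{q-1}-1$, which satisfies $G(0)=0$ and $-qG(u) = (1-u)^q + qu -1$ for any $q\neq 0$. Integrating $\int_0^y u^{p-1}((1-u)^{q-1}-1)\,\ud u$ by parts on $[0,y]$ with $y<1$ yields
\[
\int_0^y u^{p-1}\bigl((1-u)^{q-1}-1\bigr)\,\ud u = y^{p-1}G(y) + \frac{p-1}{q}\int_0^y u^{p-2}\bigl((1-u)^q+qu-1\bigr)\,\ud u.
\]
I would then invoke Lemma~\ref{lem:beta-linear} on the remaining integral to re-express it in terms of $B_y(p+1,q+1)$ plus explicit polynomial boundary terms carrying the factor $(1-y)^{q+1}=x^{-q-1}$ after setting $y:=1-x^{-1}$.

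Letting $x\to\infty$, the factor $(1-y)^{q+1}=x^{-q-1}$ vanishes because $q>-1$, and $B_y(p+1,q+1)\to B(p+1,q+1)=pq\,\Gamma(p)\Gamma(q)/\Gamma(p+q+2)$ by~\eqref{eqn:beta-limit}, which applies since $q+1>0$. The divergence is entirely contained in
\[
y^{p-1}G(y) = (1-x^{-1})^{p-1}\left(\frac{1}{q} - \frac{x^{-q}}{q} - 1 + x^{-1}\right);
\]
since $|q|<1$, the correction from $(1-x^{-1})^{p-1} = 1 + O(x^{-1})$ multiplied against the possibly unbounded $-x^{-q}/q$ is only $O(x^{-1-q})=o(1)$. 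The main obstacle I expect is purely bookkeeping: tracking which fragments of the two boundary contributions conspire to produce the claimed constant $(p+q)(p+q+1)\Gamma(p)\Gamma(q)/\Gamma(p+q+2)-1/p$. The arithmetic simplifies cleanly via $\Gamma(p+1)=p\Gamma(p)$, $\Gamma(q+1)=q\Gamma(q)$, and $(p-1)q-p=pq-q-p$, so that the spurious $\pm 1/q$ and $\pm 1$ fragments cancel and only the stated terms survive.
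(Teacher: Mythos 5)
Your proof is correct and follows essentially the same route as the paper's: the identical integration by parts (your boundary term $y^{p-1}G(y)$ is exactly the paper's $(1-x^{-1})^{p-1}\bigl(x^{-q}+q(1-x^{-1})-1\bigr)$ up to the factor $-q$), followed by Lemma~\ref{lem:beta-linear} and the same cancellation of the $\pm 1/q$ and $\pm 1$ fragments. The only cosmetic difference is that by keeping the exact identity of Lemma~\ref{lem:beta-linear} with the prefactor $(p-1)/q$ you absorb the case $p=1$ (where the coefficient vanishes and~\eqref{eqn:beta-linear} itself is inapplicable), which the paper instead treats by a separate direct evaluation.
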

\begin{proof}
The case $p=1$ follows by a direct evaluation of the integral.
Otherwise, for any $q \in \R$, and $p>-1$, $p\notin \{ 0,1 \}$,
integration by parts yields
\[\begin{split}
-q \int_0^{1-x^{-1}} u^{p-1} ((1-u)^{q-1}-1) \ud u &= (1-x^{-1})^{p-1}(x^{-q} +
 q(1-x^{-1}) -1)\\
&\qquad - (p-1) \int_0^{1-x^{-1}} u^{p-2} ((1-u)^q+qu-1) \ud u,
\end{split}
\]
which is valid for any $x>0$, since $p>-1$.   
For $q> -1$,
\[(1-x^{-1})^{p-1}(x^{-q} +  q(1-x^{-1}) -1) = x^{-q} +q-1 + o(1),
\]
and the claimed identity follows from~\eqref{eqn:beta-linear}.
\end{proof}

The remaining two results are more straightforward.

\begin{lemma}\label{lem:int-negative-part-from-x+1}
For $p,q \in \R$ with $q>-1, q\neq 0$, and $p+q < 1$, as $x\to \infty$,
\[
\int_{1+x^{-1}}^\infty u^{p-1}(u-1)^{q-1} \ud u =
-\frac{x^{-q}}{q} + (1-p)\frac{\Gamma(1-p-q)\Gamma(q)}{\Gamma(2-p)} + o(1).
\]
\end{lemma}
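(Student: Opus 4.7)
The plan is to convert the improper integral over $[1+x^{-1}, \infty)$ into an incomplete-beta-type integral over $[1/(x+1), 1)$ via a standard substitution, and then isolate the dominant divergent (or vanishing) behaviour at the left endpoint from a convergent remainder that can be evaluated in closed form using Lemma~\ref{lem:beta-const}.

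First I would apply the substitution $v = 1 - 1/u$, so that $u = 1/(1-v)$, $u - 1 = v/(1-v)$ and $\ud u = (1-v)^{-2}\ud v$. A direct computation yields $u^{p-1}(u-1)^{q-1}\ud u = v^{q-1}(1-v)^{-p-q}\ud v$, and the endpoints map via $u = 1 + x^{-1} \mapsto v = 1/(x+1)$ and $u \to \infty \mapsto v \to 1^{-}$. This recasts the integral of interest as $\int_{1/(x+1)}^1 v^{q-1}(1-v)^{-p-q}\ud v$, which is essentially an incomplete beta integral in the notation of~\eqref{eq:beta-integral}.

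Next I would subtract and add $v^{q-1}$ inside the integral, writing it as
\[
\int_{1/(x+1)}^1 v^{q-1}\bigl[(1-v)^{-p-q} - 1\bigr] \ud v + \int_{1/(x+1)}^1 v^{q-1} \ud v .
\]
The purpose of this split is to isolate the singularity at $v \to 0$ into the second, elementary integral. Indeed, since $(1-v)^{-p-q} - 1 = (p+q)v + O(v^2)$ as $v \to 0^+$, the first integrand is $O(v^q)$ near $0$, integrable because $q > -1$; and the condition $p+q < 1$ (so $-p-q > -1$) ensures integrability at $v = 1$. Hence, as $x \to \infty$, the first integral tends to $\int_0^1 v^{q-1}[(1-v)^{-p-q} - 1] \ud v$, which Lemma~\ref{lem:beta-const} applied with parameters $(p', q') = (q,\, 1-p-q)$ evaluates to $(1-p)\Gamma(q)\Gamma(1-p-q)/\Gamma(2-p) - 1/q$. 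The second integral, since $q \neq 0$, equals $q^{-1}[1 - (x+1)^{-q}]$, and expanding $(x+1)^{-q} = x^{-q}(1+1/x)^{-q} = x^{-q} + O(x^{-q-1})$ produces the asserted $-x^{-q}/q$ together with an error of size $O(x^{-q-1})$.

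Adding the two contributions, the $1/q$ terms cancel and the claimed asymptotic follows. The only subtlety is the final error bound: I need $x^{-q-1} = o(1)$ uniformly across the two regimes $q \in (0, \infty)$ and $q \in (-1, 0)$, and this holds precisely because $q > -1$ forces the exponent $-q - 1$ to be strictly negative in both cases. The hypotheses $q > -1$, $q \neq 0$, and $p + q < 1$ stated in the lemma are exactly what is required to ensure convergence of each integral considered, the applicability of Lemma~\ref{lem:beta-const}, and the vanishing of the remainder.
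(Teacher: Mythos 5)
Your proof is correct, but it takes a genuinely different route from the paper's.

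The paper integrates by parts first, transferring a derivative onto $(u-1)^{q-1}$ to produce a boundary term $-(1+x^{-1})^{p-1}x^{-q}$ plus the integral $-(p-1)\int_{1+x^{-1}}^\infty u^{p-2}(u-1)^q\,\ud u$; it then applies the simple substitution $v = u^{-1}$, which turns that remaining integral into the incomplete beta integral $\int_0^{x/(x+1)} v^{-p-q}(1-v)^q\,\ud v$, and invokes the convergence~\eqref{eqn:beta-limit} to $B(1-p-q,q+1)$. You instead perform a single M\"obius substitution $v = 1 - 1/u$ at the outset, which lands directly on $\int_{1/(x+1)}^1 v^{q-1}(1-v)^{-p-q}\,\ud v$; you then split off $v^{q-1}$ to isolate the left-endpoint singularity into an elementary integral, and recognise the remainder as exactly the integral treated by Lemma~\ref{lem:beta-const} with parameters $(q,\,1-p-q)$. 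Both arguments are short and rest on the same hypotheses in the same places: $q>-1$ for integrability at $v=0$ and for $O(x^{-q-1})=o(1)$, $q\neq 0$ for the elementary antiderivative, and $p+q<1$ for integrability at $v=1$. The one thing your version buys is modularity: rather than re-deriving an incomplete-beta limit from scratch, you reuse the already-proved Lemma~\ref{lem:beta-const}, which also makes the cancellation of the $1/q$ terms transparent. The paper's version is marginally more self-contained in that it appeals only to the basic limit~\eqref{eqn:beta-limit} of the incomplete beta function. Either argument is acceptable; just double-check the bookkeeping that the $1/q$ from $\int_{1/(x+1)}^1 v^{q-1}\,\ud v$ indeed cancels the $-1/q$ produced by Lemma~\ref{lem:beta-const}, which it does.
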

\begin{proof}
Integrating by parts, we find
\[
q \int_{1+x^{-1}}^\infty u^{p-1}(u-1)^{q-1} \ud u  = -(1+x^{-1})^{p-1} x^{-q} - (p-1) \int_{1+x^{-1}}^\infty
u^{p-2}(u-1)^q \ud u,
\]
which is valid for any $x>0$, since $p+q<1$.  The change of variable $v =
u^{-1}$ yields 
\[
\int_{1+x^{-1}}^\infty
u^{p-2}(u-1)^q \ud u = \int_0^{\frac{x}{x+1}} v^{-p-q}(1-v)^q \ud v = B(1-p-q,q+1) + o(1),
\]
by~\eqref{eqn:beta-limit}, provided $q>-1$.
Now note that $-(1+x^{-1})^{p-1}x^{-q} = -x^{-q} + o(1)$ for $q>-1$.
\end{proof}

\begin{lemma}\label{lem:int-positive-part}
For $p,q \in \R$ with $-1 < p <0$ and $p+q<1$,
\[
\int_0^\infty u^{p-1}((1+u)^{q-1}-1) \ud u= (1-q)\frac{\Gamma(1-p-q)\Gamma(p)}{\Gamma(2-q)}.
\]
\end{lemma}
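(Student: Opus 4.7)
The plan is to reduce the integral to a classical beta integral via integration by parts, which regularises the singularity at $u = 0$. First I would verify convergence: near $u=0$, the expansion $(1+u)^{q-1} - 1 = (q-1)u + O(u^2)$ gives an integrand of order $u^p$, integrable because $p > -1$; near $u = \infty$, the integrand is $O(u^{p-1})$ when $q \leq 1$ (integrable since $p < 0$) and $O(u^{p+q-2})$ when $q > 1$ (integrable since $p+q-2 < -1$).

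Next I would integrate by parts with antiderivative $u^p/p$ for $u^{p-1}$. The boundary contribution at $u = 0$ is
\[
\lim_{u \downarrow 0} \frac{u^p}{p}\bigl((1+u)^{q-1}-1\bigr) = \lim_{u \downarrow 0} \frac{(q-1)u^{p+1}}{p}(1+O(u)) = 0,
\]
since $p > -1$; at $u = \infty$, we have $(u^p/p)((1+u)^{q-1}-1) = O(u^{p+q-1})$ when $q > 1$ and $O(u^p)$ when $q \leq 1$, both of which vanish by the hypotheses $p + q < 1$ and $p < 0$. Therefore
\[
\int_0^\infty u^{p-1}\bigl((1+u)^{q-1}-1\bigr)\,\ud u = \frac{1-q}{p}\int_0^\infty u^p (1+u)^{q-2}\,\ud u,
\]
and the remaining integral is genuinely convergent at both endpoints (no subtraction needed), because $p > -1$ and $p+q < 1$ give $p > -1$ and $p + q - 2 < -1$.

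Finally I would evaluate the standard beta-type integral by the substitution $v = u/(1+u)$, obtaining
\[
\int_0^\infty u^p (1+u)^{q-2}\,\ud u = \int_0^1 v^p (1-v)^{-p-q}\,\ud v = B(p+1,\,1-p-q) = \frac{\Gamma(p+1)\Gamma(1-p-q)}{\Gamma(2-q)}.
\]
Multiplying by $(1-q)/p$ and using $\Gamma(p+1) = p\,\Gamma(p)$ cancels the factor of $p$ in the denominator and yields the claimed identity. The only delicate point in the argument is handling the boundary term at infinity uniformly in $q$, and here the combined hypotheses $p<0$ and $p+q<1$ do exactly what is needed; otherwise the computation is entirely routine.
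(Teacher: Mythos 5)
Your proof is correct and follows essentially the same route as the paper: integrate by parts to remove the subtracted $-1$, then reduce to a beta integral (your substitution $v=u/(1+u)$ and the paper's $v=(1+u)^{-1}$ give the same $B(p+1,1-p-q)$ by symmetry). You simply spell out the convergence and boundary-term checks that the paper compresses into ``which is valid since $-1<p<0$ and $p+q<1$''.
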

\begin{proof}
Integrating by parts, we find
\[
p \int_0^\infty u^{p-1}((1+u)^{q-1}-1)\ud u = -(q-1) \int_0^\infty
u^p(1+u)^{q-2} \ud u
\]
which is valid since $-1 < p <0$ and $p+q<1$.  Setting $v= (1+u)^{-1}$ yields
\[
\int_0^\infty u^p(1+u)^{q-2} \ud u = \int_0^1 v^{-p-q} (1-v)^p \ud v = B(1-p-q,p+1),
\]
and the identity follows using the fact that $p\Gamma(p) = \Gamma(p+1)$.
\end{proof}

\section*{Acknowledgement}

NG was supported by the Heilbronn Institute for Mathematical Research.

\end{document}